\documentclass{article}
\usepackage{graphicx} 
\usepackage{amsmath, amssymb}
\usepackage{amsthm}
\usepackage{float}
\usepackage{graphicx}
\usepackage{subcaption}
\usepackage{bm}
\usepackage{enumerate}
\usepackage{comment}
\usepackage{tikz}

\usepackage[pagewise]{lineno}

\usepackage[backend=biber, style=numeric, doi=true, giveninits=true]{biblatex}
\AtEveryBibitem{
  \clearfield{number}
}

\newtheorem{theorem}{Theorem}[section]
\newtheorem{corollary}[theorem]{Corollary}
\newtheorem{proposition}[theorem]{Proposition}
 
\newtheorem{remark}[theorem]{Remark}

\newcommand{\TVtw}{\operatorname{TV}_{\widetilde{w}}}
\newcommand{\TVw}{\operatorname{TV}_w}
\newcommand{\tw}{\widetilde{w}}
\newcommand{\vnu}{{\bm{\nu}}}

\newcommand{\methodTwo}{\textcolor{black}{directionally weighted\ }}

\DeclareMathOperator*{\argmax}{arg\,max}
\DeclareMathOperator*{\argmin}{arg\,min}

\title{Directionally Weighted Total Variation for Inverse Problems}

\author{Ole L{\o}seth Elvetun\thanks{Faculty of Science and Technology, Norwegian University of Life Sciences, P.O. Box 5003, NO-1432 {\AA}s, Norway. Email: ole.elvetun@nmbu.no.} and Bj{\o}rn Fredrik Nielsen\thanks{Faculty of Science and Technology, Norwegian University of Life Sciences, P.O. Box 5003, NO-1432 {\AA}s, Norway. Email: bjorn.f.nielsen@nmbu.no.}}

\begin{document}

\maketitle
\begin{abstract}
We study weighted total variation (TV) regularization for inverse problems in which the forward operator has a large null space, a setting in which standard (unweighted) TV is known to produce systematic reconstruction artifacts such as spatial bias. To address this issue, we consider spatially varying weights derived from a sensitivity analysis of the forward operator expressed via an associated Green’s function. These weights rebalance the TV penalty and thereby compensate for the inhomogeneous sensitivity of the forward operator.

We introduce and analyze two related models: an isotropic weighted TV functional, where the weight reflects worst-case directional sensitivity, and a \methodTwo refinement in which the weighting depends on the actual jump direction of the solution. In addition to the standard Tikhonov formulation, we also study corresponding basis-pursuit problems. We derive optimality conditions and prove exact recovery results under suitable structural assumptions. Furthermore, we investigate how recoverability depends on the geometry of the solution and its distance to the observation boundary.

Numerical experiments for inverse source problems and related applications demonstrate that the proposed weighted formulations significantly reduce artifacts present in unweighted TV reconstructions, yielding improved localization and size recovery. These results highlight the crucial role of spatial weighting in TV regularization for inverse problems with large null spaces.
\end{abstract}

\section{Introduction}

In this paper we are concerned with inverse problems for which the forward operator $K$ has a significant null space. More precisely, we assume that $K: L^2(\Omega) \rightarrow L^2(\partial\Omega)$ is a linear operator and consider the weighted problem
\begin{equation} \label{def:variational_form}
    \min_{f \in BV(\Omega)} \left\{ \frac{1}{2} \| Kf - d \|_{L^2(\partial\Omega)}^2  + \alpha \underbrace{\int_{\Omega} w(y) \, |Df| (y)}_{= \TVw(f)} + \beta \int_{\partial\Omega} w_\partial (y) |f(y)| \, ds(y) \right\}.
\end{equation}
Here, $d \in L^2(\partial\Omega)$ is the measured data, $\Omega \subset \mathbb{R}^2$ is a bounded Lipschitz domain, $\alpha > 0$ is a regularization parameter, $Df$ is the vector-valued Radon measure representation of the "gradient" of $f$, and $w$ is the weight function
\begin{equation} \label{def:w}
w(y)=\sup_{\bm d,\; |\bm d|_2 =1}
\left\|
K\!\left(
\nabla G(\cdot;y)\cdot \bm d
\right)
\right\|_{L^p(\partial\Omega)},
\end{equation}
where $G$ is a suitable Green's function which we define below. The presence of the boundary term, i.e., the term involving the parameter $\beta$, is motivated by the following observation: Without this term, it becomes "cheap" to obtain a good data fit with a source which is (approximately) constant close to $\partial \Omega$ since the total variation of a constant function is zero. Hence, a boundary-bias occurs. We will return to this issue below and explain how it is linked to the choice of boundary conditions to use in the definition of the Green's function $G$.  

Operators that map interior distributions to boundary observations, i.e., $K \colon L^2(\Omega) \to L^2(\partial \Omega)$, arise naturally in inverse source problems for elliptic partial differential equations, see~\cite{isakov2006inverse,elbadia2000inverse} and the references therein. Concrete examples include source identification in groundwater modelling, the inverse problem of electrocardiography~\cite{macleod1998recent}, and certain regimes of electrical impedance tomography~\cite{borcea2002electrical}. A defining feature of such operators is that the dimension of the null space is typically infinite: many distinct interior sources produce identical boundary data. Stable reconstruction therefore requires regularization, and the regularization functional plays a particularly active role since it must determine the component of the solution that lies in $\mathcal{N}(K)$.

When $K$ has a substantial null space, the conventional choice $w \equiv 1$ in \eqref{def:variational_form} is known to introduce systematic artifacts in the reconstructed solution. Since each unit of total variation produces a far larger reduction of the data-fidelity term in regions where $K$ is highly sensitive than in regions where $K$ is nearly blind, an unweighted TV penalty effectively becomes cheaper, relative to the fidelity term, in the sensitive regions. As a consequence, recovered features tend to be displaced toward those regions, and sources of equal magnitude may be reconstructed with different intensities - if at all - depending on their location relative to $\partial \Omega$. These observations motivate spatially varying weights $w$ that rebalance the TV penalty according to the non-uniform sensitivity of $K$.

Spatially weighted TV functionals have been considered by several authors and in several contexts. Anisotropic and direction-aware TV functionals are studied in~\cite{grasmair2010anisotropic,bayram2012directional}, and structure-guided variants tailored to multi-channel and multi-modal data appear in~\cite{ehrhardt2016vectorial,ehrhardt2016mri}. Specifically for inverse problems with significant null spaces, weighted Tikhonov and sparsity formulations have been investigated previously, and the anisotropic (i.e., $|Df|_1$) weighted TV counterpart to problem~\eqref{def:variational_form} was introduced and analyzed in~\cite{burger2025weightedtv}. The present paper builds on that work in two main respects: (i) it replaces the axis-aligned weights in the anisotropic setting with taking the supremum over all possible directions, and (ii) it introduces and analyzes the \methodTwo refinement~$\TVtw$, in which the worst-case supremum over directions is replaced by the actual jump direction of the candidate solution, see \eqref{def:variational_form_pre} and \eqref{def:tw} below.

The form of the weight \eqref{def:w} can be motivated as follows. Using the representation of $K$ in terms of $G$, the action of $K$ on a localized perturbation of $f$ near a point $y$, in direction $\bm d$, is to leading order given by $K(\nabla G(\cdot;y)\cdot \bm d)$. The weight $w(y)$ thus measures the maximal sensitivity of the data to a unit perturbation of the gradient of $f$ at $y$, with the supremum taken over all admissible directions. The role of $w$ in \eqref{def:variational_form} is thus to remove spatial bias: where $K$ is weakly sensitive, $w(y)$ is small and the TV penalty becomes inexpensive, so the minimizer is allowed to develop jumps and other structure there. Conversely, in regions of high sensitivity each unit of total variation already produces a large change in $Kf$, so a larger value of $w(y)$ is needed to prevent the data-fidelity term from biasing reconstructions toward those regions. In this way, the weight rebalances the cost of total variation against the heterogeneous sensitivity of $K$ across $\Omega$, allowing the reconstruction to follow the data wherever the data are informative and the prior wherever they are not.

Our investigation will also suggest the following alternative to \eqref{def:variational_form}, involving the polar decomposition $Df = \vnu_f |Df|$, $|\vnu_f|_2 = 1$,
\begin{equation} \label{def:variational_form_pre}
    \min_{f \in BV(\Omega)} \left\{ \frac{1}{2} \| Kf - d \|_{L^2(\partial)}^2  + \alpha \underbrace{\int_{\Omega}
\tw(y,f) \, |Df|(y)}_{= \TVtw(f)} + \beta \int_{\partial\Omega} w_\partial (y) |f(y)| \, ds(y)\right\},
\end{equation}
where the weight function in this case reads
\begin{equation} \label{def:tw}
    \tw(y,f) = \left\|
K\!\left(
\nabla G(\cdot;y)\cdot \vnu_f(y)
\right)
\right\|_{L^p(\partial\Omega)}.
\end{equation}
Below we will simply write $\vnu$, instead of $\vnu_f$, whenever it is clear from the context with which function $f$, $\vnu$ is associated. 
The crucial difference between \eqref{def:w} and \eqref{def:tw} is that the supremum over $\bm d$ in \eqref{def:w} is replaced by the actual jump direction $\vnu(y)$ of the candidate $f$. While \eqref{def:w} treats every direction equally and thereby penalizes the worst-case sensitivity at $y$, the formulation in \eqref{def:tw} is sensitive to the direction of the level curves: variations of $f$ in different directions are penalized differently, in accordance with how strongly $K$ responds to perturbations in those directions. Setting $\phi(y, \vnu) := \|K(\nabla G(\cdot;y) \cdot \vnu)\|_{L^p(\partial\Omega)}$, the map $\vnu \mapsto \phi(y,\vnu)$ is the composition of a linear map with the $L^p$ norm and is therefore positively $1$-homogeneous and sublinear on $\mathbb{R}^2$ for every $y$. Consequently,
\begin{equation*}
   \TVtw(f) \;=\; \int_\Omega \phi(y, \vnu(y)) \, d|Df|(y)
\end{equation*}
fits into the framework of anisotropic, position-dependent total variation functionals (i.e., Finsler TV \cite{amar1994notion})
\begin{equation} \label{def:phi_TV}
   f \mapsto \int_\Omega \phi(y, dDf(y)),
\end{equation}
in which $\phi: \overline{\Omega} \times \mathbb{R}^N \to [0,\infty)$ is positively $1$-homogeneous in its second argument; despite the appearance of $\vnu$ in the integrand, $\TVtw(f)$ is just the polar-decomposition representation of \eqref{def:phi_TV} for our specific choice of $\phi$. Functionals of this type are convex and lower semicontinuous on $BV(\Omega)$ under mild assumptions on $\phi$, and have been studied since the 1990s in the context of relaxation and integral representation~\cite{bouchitte1998global,amar2008relaxation,decicco2007relaxation}, as well as in connection with weighted-anisotropic TV regularization for image reconstruction~\cite{jalalzai2014discontinuities}. The class \eqref{def:phi_TV} continues to attract attention: the gradient flow associated with general convex linear-growth integrands $f(x,\mathbb{A}u)$ and the corresponding BV-relaxation are studied in~\cite{meyer2026total}, and a general lower semicontinuity and existence theory for anisotropic TV functionals with position-dependent integrand is developed in~\cite{ficola2024lower}. The closely related directional and structure-guided TV functionals appear in~\cite{bayram2012directional,ehrhardt2016vectorial}. 

Finally, by construction one has the pointwise inequality $\TVtw(f) \le \TVw(f)$, with equality at points where the jump direction $\vnu(y)$ realizes the supremum in~\eqref{def:w}.

In addition to \eqref{def:variational_form} and \eqref{def:variational_form_pre}, we also consider their basis-pursuit counterparts, in which the data fidelity term is replaced by a constraint:
\begin{equation*}
    \min_{f \in BV(\Omega)} \TVw(f) \quad \text{subject to} \quad K f = d,
\end{equation*}
and analogously for $\TVtw$. 

\paragraph{Contributions and outline.} The purpose of this paper is to 
(i) derive the weights $w$ and $\tw$ from a sensitivity argument involving the Green's function $G$ associated with $K$, (ii) establish some recoverability properties of the basis pursuit counterparts to \eqref{def:variational_form} and \eqref{def:variational_form_pre}, and (iii) illustrate, by means of numerical experiments, that the weighted formulations correct several of the artifacts produced by standard isotropic TV when $K$ has a significant null space. 

The remainder of the paper is organized as follows. Section~\ref{sec:motivating} provides a motivating example where unweighted total variation fails to recover an interior source. Section~\ref{sec:preliminaries} collects the necessary background on the Green's function setting, and the detailed motivation and analysis of $w$ and $\tw$. In Section~\ref{sec:analysis} we present the optimality conditions and establish some recoverability criteria for the basis-pursuit counterparts of \eqref{def:variational_form} and \eqref{def:variational_form_pre}. Section~\ref{sec:numerics} reports numerical experiments illustrating the theoretical results, and Section~\ref{sec:conclusion} contains concluding remarks.

\section{Motivating example} \label{sec:motivating}

Here we present a numerical example showing that standard unweighted isotropic TV fails to yield adequate results for identifying an internal source from boundary measurements. Throughout this section we let $\Omega = (0,1)^2$ and take $K$ to be the operator that maps a source $f \in L^2(\Omega)$ to the boundary trace $u|_{\partial\Omega}$ of the solution $u \in H^1(\Omega)$ of the screened-Poisson problem
\begin{equation}\label{eq:screenedPoisson}
    -\Delta u + u = f \quad \text{in } \Omega, \qquad u = 0 \quad \text{on } \partial \Omega,
\end{equation}
for which the analysis below applies. Boundary data $d = K f^\dagger$ are generated synthetically from a known/true piecewise constant source $f^\dagger$, and we then solve \eqref{def:variational_form_pre} with $\tw \equiv 1$. The true source is the indicator function of a disk centered at $(0.5, 0.6)$ with radius $0.3$, see Figure~\ref{fig:unweighted}(a).

The reconstructions obtained with unweighted TV are shown for $\beta = 0$ (i.e., without boundary penalty) in Figure~\ref{fig:unweighted}(b) and for $\beta = \alpha$ (i.e., with boundary penalty) in Figure~\ref{fig:unweighted}(c). Two effects are immediately visible. First, the recovered support is pulled toward the boundary $\partial \Omega$: the reconstructed mass is concentrated where it produces the largest boundary signal per unit total variation, rather than at the true location of the source. This is particularly outspoken when $\beta = 0$. Second, the magnitude of the reconstruction is markedly smaller than that of $f^\dagger$. Both effects originate from the spatial inhomogeneity of $K$: in the unweighted formulation, every unit of $|Df|$ is charged the same price regardless of where it is placed, even though a unit of $|Df|$ near $\partial\Omega$ explains far more of $d$ than a unit of $|Df|$ deep in the interior. The minimizer therefore prefers configurations near the boundary, and accepts a global reduction in amplitude in order to lower the regularization cost. 

\begin{figure}[H]
    \centering
    \begin{subfigure}[t]{0.32\linewidth}
        \centering
        \includegraphics[trim=0 0 0 19, clip, width=\linewidth]{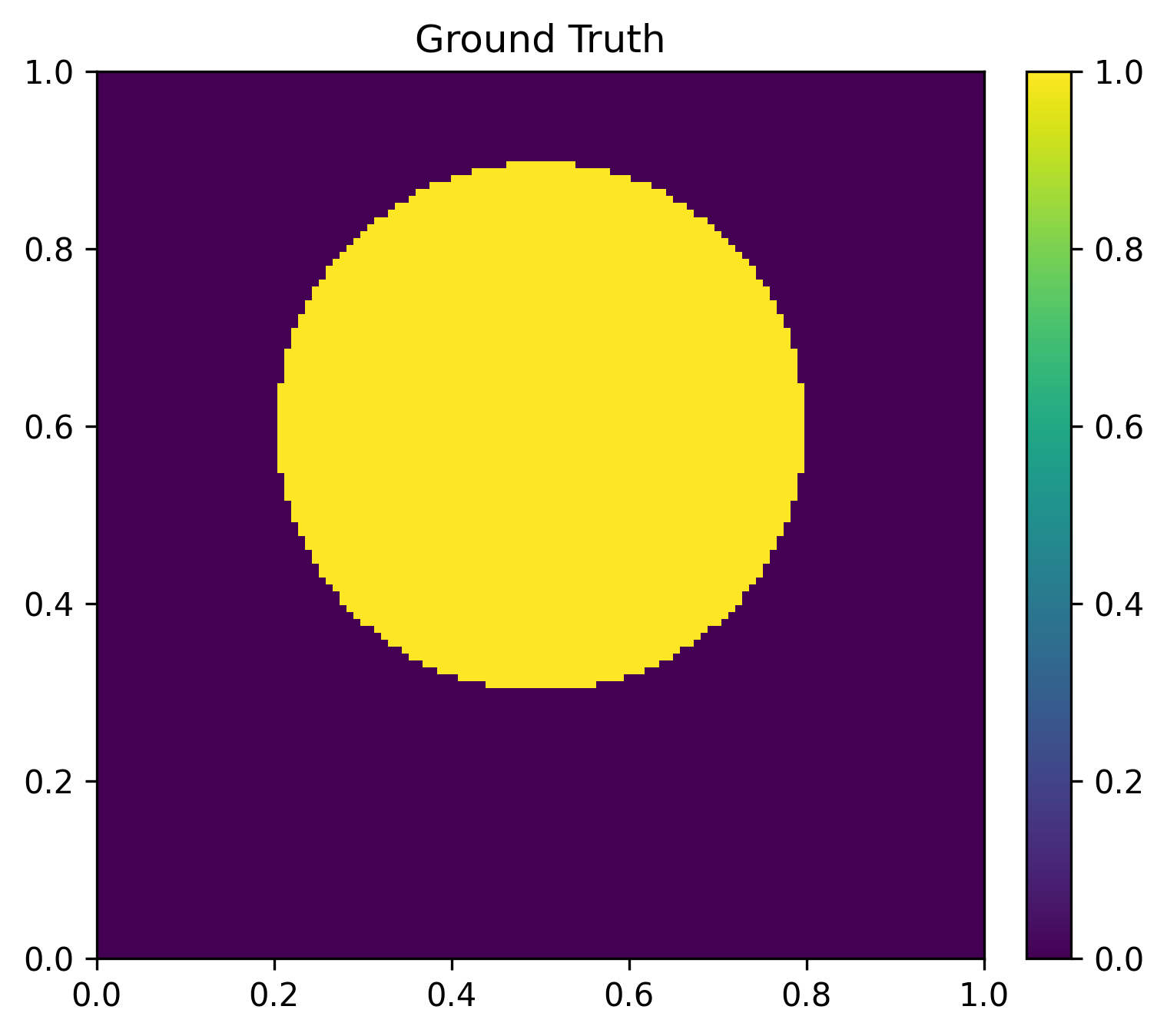}
        \caption{True source}
    \end{subfigure}
    \begin{subfigure}[t]{0.32\linewidth}
        \centering
        \includegraphics[trim=0 0 0 19, clip, width=\linewidth]{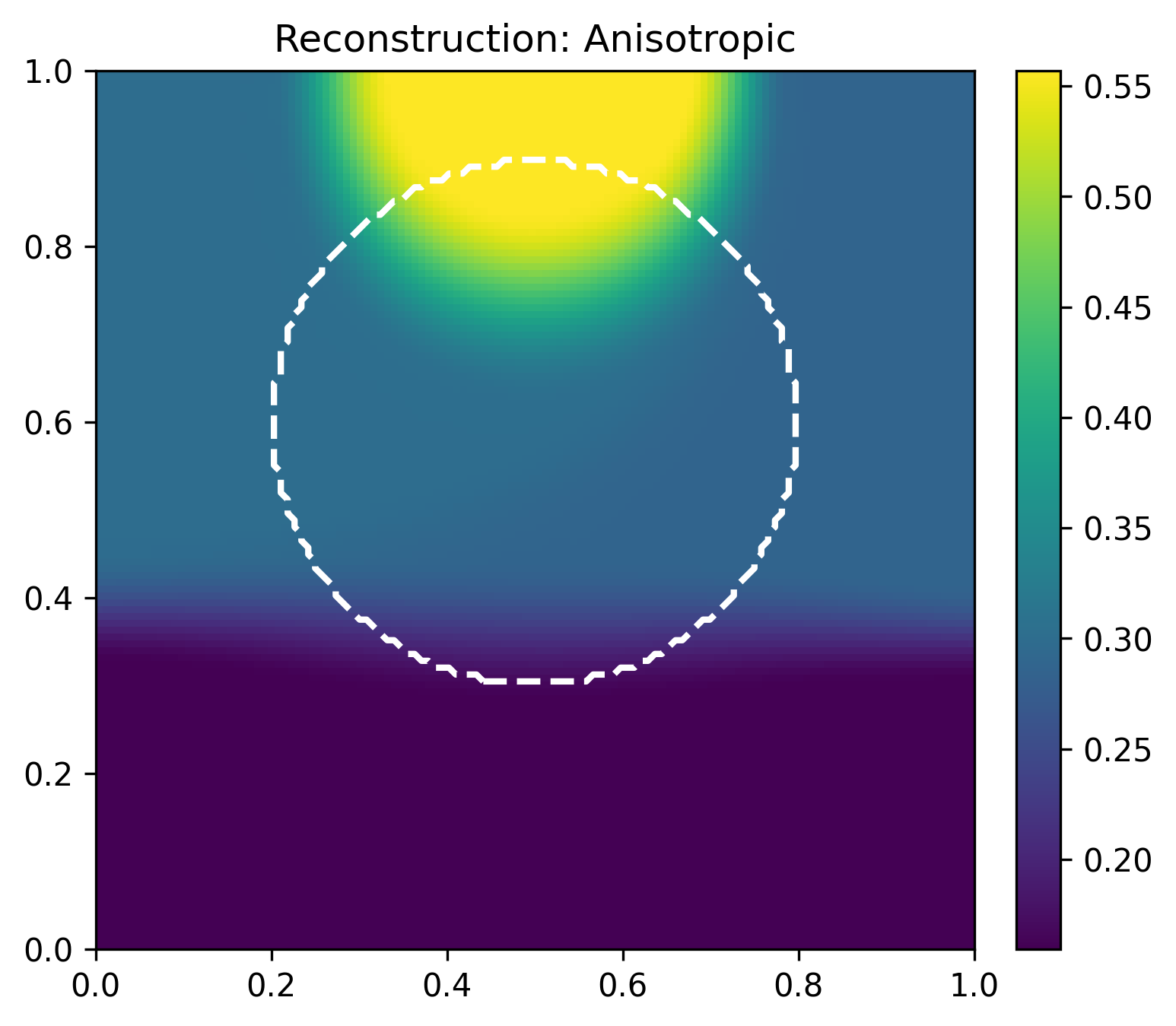}
        \caption{Unweighted; $\beta=0$}
    \end{subfigure}
    \begin{subfigure}[t]{0.32\linewidth}
        \centering
        \includegraphics[trim=0 0 0 19, clip, width=\linewidth]{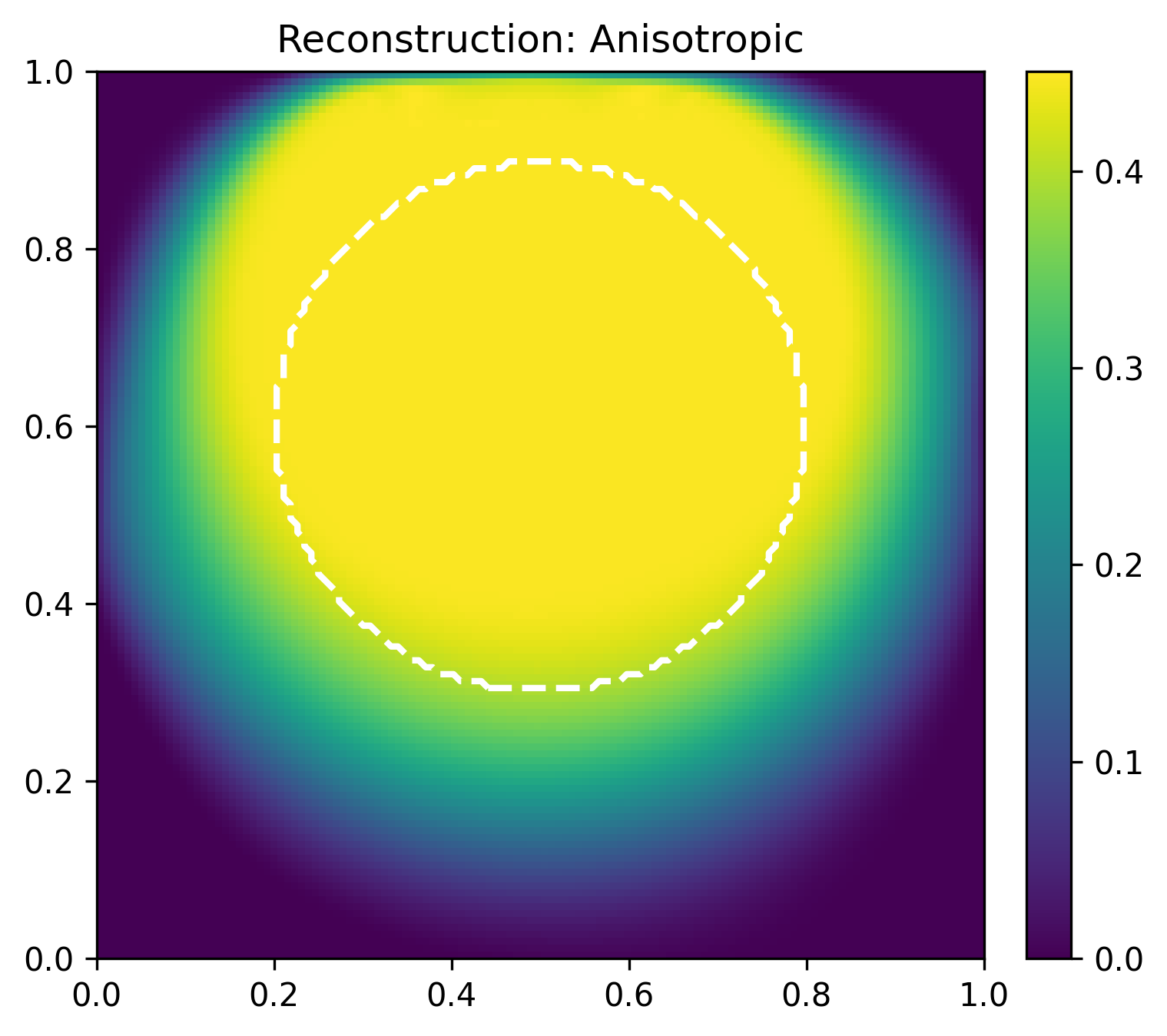}
        \caption{Unweighted; $\beta = \alpha$}
    \end{subfigure}
    \caption{Comparison of the true source and unweighted inverse recoveries (i.e., $\tw = 1$ in \eqref{def:variational_form_pre}). Panels (b) and (c) show results computed without and with boundary penalty, respectively. 
    }
    \label{fig:unweighted}
\end{figure}

This example sets the stage for the rest of the paper. It shows that, when the forward operator has a significant null space, the choice of regularization functional must account for the spatial sensitivity of $|Df|$, which serves as the main motivation for introducing the weight functions $w$ and $\tw$. 

\section{Preliminaries}\label{sec:preliminaries}
We will now derive how the source $f$ can be represented in terms of the Green's function of the Laplace operator and the gradient $\nabla f$. These standard results then motivate the definition of our weights and can be used to establish some basic inequalities. 

One interesting point is that the choice of boundary conditions for the Green's function influences the weights and the choice could - and perhaps should - be based on whether one wants to penalize jumps at the boundary or not, i.e., whether $\beta = 0$ or $\beta > 0$ in \eqref{def:variational_form} and \eqref{def:variational_form_pre}. We will return to this after introducing the framework.

\subsubsection*{Neumann representation}
Let $G$ denote the Green's function associated with a standard self-adjoint second order elliptic operator subject to homogeneous Neumann boundary conditions: 
\begin{equation} \label{def:Greens_function}
\begin{split}
    &-\Delta_y G(y;x)=\delta_x(y)-\frac{1}{|\Omega|} \quad y \in \Omega, \\
    &\nabla_y G(y;x) \cdot \bm{n}(y) = 0 \quad y \in \partial \Omega, 
\end{split}    
\end{equation}
and note that we may express the source $f$ in the form, using integration by parts, 
\begin{align}
    \nonumber
    f(x) &= \int_\Omega \delta_x(y) f(y) \, dy \\
    \nonumber
    &=\int_{\Omega} \nabla G(y;x) \cdot  Df(y) 
    + \frac{1}{|\Omega|} \int_\Omega f(y) \, dy \\
    \nonumber
    &=\int_{\Omega} \nabla G(y;x) \cdot \vnu(y) \, |Df|(y)   
    + \frac{1}{|\Omega|} \int_\Omega f(y) \, dy\\
    \label{eq:source_representation}
    &=\int_{\Omega} \nabla G(x;y) \cdot \vnu(y) \, |Df|(y)   
    + \frac{1}{|\Omega|} \int_\Omega f(y) \, dy,   
\end{align}
where we have employed the polar decomposition $Df = \vnu |Df|$, $| \vnu |_2 = 1$, and the symmetry of the Green's function $G$. We also observe that this representation is independent of the free constant of the solution of \eqref{def:Greens_function}. 

Let us, for the sake of simplicity, assume that $K1=0$, i.e., that the null space of $K$ contains the constant functions. If not, we could consider the composite function $Q = P \circ K$, where $P$ projects any $y \in R(K)$ onto the orthogonal complement of the image under $K$ of the constants and then define the weights in \eqref{def:w} and \eqref{def:tw} using $Q$ rather than $K$. See \cite{burger2025weightedtv} for details. 

From \eqref{eq:source_representation} we find that, keeping in mind that $K$ is a linear operator,  
\begin{align*}
Kf
&= K \int_{\Omega} \nabla G(\cdot;y) \cdot \vnu(y) \, |Df|(y) \\[0.5em]
&= \int_{\Omega} 
K\!\left(
\nabla G(\cdot;y)\cdot \vnu(y)
\right) \, |Df|(y) ,
\end{align*}
and we get the following bound for the $L^p$-norm of the image of $f$ under $K$: 
\begin{align}
\nonumber
\|Kf\|_{L^p(\partial\Omega)}
&= \left(\int_{\partial\Omega} \left| \int_\Omega K\!\left(
\nabla G(\cdot;y)\cdot \vnu(y)
\right) \, |Df|(y) \right|^p ds(z) \right)^{1/p} \\[0.5em]
\nonumber
&\le \int_{\Omega} \left(
\int_{\partial\Omega}
\left|
K\!\left(
\nabla G(\cdot;y)\cdot \vnu(y)
\right)
\right|^p ds(z) \right)^{1/p} \, |Df|(y) \\[0.5em]
\nonumber
&= \int_{\Omega}
\left\|
K\!\left(
\nabla G(\cdot;y)\cdot \vnu(y)
\right)
\right\|_{L^p(\partial\Omega)} \, |Df|(y) \\[0.5em]
\label{eq:upper_bound_pre}
&= \TVtw(f), 
\end{align}
provided that $\tw$ is defined according to \eqref{def:tw}. 

It follows from the definitions \eqref{def:w} and \eqref{def:tw} that 
\begin{equation*}
   \tw(y,f) \leq w(y)  \quad \forall y \in \Omega, \, \forall f \in BV(\Omega), 
\end{equation*}
and hence that 
\begin{equation*}
    \TVtw(f) \leq \TVw(f) \quad \forall f \in BV(\Omega).     
\end{equation*}
We can therefore in view of \eqref{eq:upper_bound_pre} conclude that also 
\begin{align}
\label{eq:upper_bound}
\|Kf\|_{L^p(\partial\Omega)}
&\leq \TVw(f) \quad \forall f \in BV(\Omega). 
\end{align}

\subsubsection*{Dirichlet representation}

If we instead consider the Green's function for the Laplace operator with homogeneous Dirichlet boundary conditions, we get, in contrast to \eqref{eq:source_representation}, an alternative representation of the source $f$, namely
\begin{equation}\label{label:source_rep_dirichlet}
    f(x) = \int_\Omega \nabla G(x;y) \cdot \vnu(y)|Df|(y) - \int_{\partial\Omega} \partial_{\bm n}G(x;y) \ f(y) \, ds(y). 
\end{equation}

Assuming $f$ to have zero trace, we get exactly the same upper bounds as derived in \eqref{eq:upper_bound_pre} and \eqref{eq:upper_bound}. If not, we get, from an argument analogous to \eqref{eq:upper_bound_pre}, that
\begin{equation*}
    \|Kf\|_{L^p(\partial\Omega)} \leq \TVtw(f) + \int_{\partial\Omega} w_\partial |f| \, ds \leq \TVw(f) + \int_{\partial\Omega} w_\partial |f| \, ds,
\end{equation*}
where $$w_\partial(y) = \left(\int_{\partial\Omega} |K (\partial_{\bm n} G(\cdot;y))|^p ds(z)\right)^{1/p}.$$

\begin{remark}
    Note that the weight for the boundary term arises naturally when applying Dirichlet boundary conditions to the Green's function. This is not the case for Neumann conditions. This motivates the use of the former when one wants to apply boundary penalty, i.e., when $\beta > 0$.
\end{remark}

\section{Analysis}\label{sec:analysis}
In this section we will first consider some instructive cases where we can guarantee exact recovery of a given source, before we view the problems through the lens of more classical optimality conditions.

\subsection{Exact recovery}
We will now present an exact recovery result for the zero regularization counterpart to \eqref{def:variational_form_pre}: 
\begin{theorem} \label{thm:exact_recovery_pre}
    Let $p = 1$ and $\beta = 0$. Assume that $K1=0$ and let $Df^* = \vnu^* |Df^*|$ be the polar decomposition of the BV-derivative of the true source $f^*$. If $f^*$ satisfies, for every $z \in \partial\Omega$, 
\begin{equation} \label{eq:assumption1a}
\operatorname{sgn}\!\left\{
(K\!\left[\nabla G(\cdot;y)\cdot \vnu^*(y)
\right])(z)
\right\}
\geq 0
\qquad \forall y \in \Omega 
\end{equation}
or 
\begin{equation} \label{eq:assumption1b}
\operatorname{sgn}\!\left\{
(K\!\left[\nabla G(\cdot;y)\cdot \vnu^*(y)
\right])(z)
\right\}
\leq 0
\qquad \forall y \in \Omega , 
\end{equation}
then 
\begin{equation} \label{def:basis_pursuit_pre}
    f^* \in \arg\min_{g \in BV(\Omega)} \TVtw(g) \textnormal{ subject to } Kg = K f^*.
\end{equation}
\end{theorem}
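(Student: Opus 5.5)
The plan is to show that equality holds in the upper bound \eqref{eq:upper_bound_pre} when $g = f^*$. Once that is established, feasible competitors are handled by the inequality itself. Take any $g \in BV(\Omega)$ with $Kg = Kf^*$. With $p=1$, applying \eqref{eq:upper_bound_pre} to $g$ gives
\begin{equation*}
\|Kf^*\|_{L^1(\partial\Omega)} = \|Kg\|_{L^1(\partial\Omega)} \le \TVtw(g).
\end{equation*}
It therefore suffices to prove that
\begin{equation*}
\TVtw(f^*) = \|Kf^*\|_{L^1(\partial\Omega)},
\end{equation*}
since then $\TVtw(f^*) \le \TVtw(g)$ for every feasible $g$, which is \eqref{def:basis_pursuit_pre}. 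This step uses $K1=0$ and the Neumann representation \eqref{eq:source_representation}; since $\beta = 0$, no boundary term enters.

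To prove the equality, write $h(z,y) := (K[\nabla G(\cdot;y)\cdot\vnu^*(y)])(z)$. From \eqref{eq:source_representation} and $K1=0$ we have
\begin{equation*}
(Kf^*)(z) = \int_\Omega h(z,y)\, d|Df^*|(y).
\end{equation*}
Fix $z$ and assume \eqref{eq:assumption1a} holds at that $z$, so $h(z,y)\ge 0$ for all $y$. Then $|(Kf^*)(z)| = \int_\Omega |h(z,y)|\,d|Df^*|(y)$, because the integrand has constant sign. The case \eqref{eq:assumption1b} is symmetric, with a global minus sign. The reading I take is that the choice between \eqref{eq:assumption1a} and \eqref{eq:assumption1b} may depend on $z$ but not on $y$; this is exactly what makes the triangle inequality in $y$ sharp. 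Integrating over $z\in\partial\Omega$ and applying Tonelli's theorem (the integrand $|h|$ is nonnegative) gives
\begin{equation*}
\|Kf^*\|_{L^1(\partial\Omega)} = \int_\Omega \int_{\partial\Omega} |h(z,y)|\, ds(z)\, d|Df^*|(y) = \int_\Omega \tw(y,f^*)\, d|Df^*|(y) = \TVtw(f^*),
\end{equation*}
using definition \eqref{def:tw} with $p=1$.

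The main obstacle is justifying the interchange of $K$ with the $|Df^*|$-integral and the measurability and integrability of $h$ jointly in $(z,y)$. These are the same steps already used, implicitly, in deriving \eqref{eq:upper_bound_pre}, so I would invoke that derivation rather than redo it. The remaining task is to note that the only inequality in that chain, Minkowski's integral inequality (for $p=1$, simply $|\int h\,d\mu|\le\int|h|\,d\mu$ applied pointwise in $z$), becomes an equality exactly when, for a.e.\ $z$, the sign of $h(z,\cdot)$ is constant $|Df^*|$-a.e. This is precisely what the hypotheses \eqref{eq:assumption1a}/\eqref{eq:assumption1b} guarantee.
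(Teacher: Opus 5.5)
Your proposal is correct and follows the paper's proof: the sign hypotheses make the bound $\|Kf\|_{L^1(\partial\Omega)} \le \TVtw(f)$ an equality at $f = f^*$, and applying that same bound to any feasible $g$ gives $\TVtw(f^*) \le \TVtw(g)$. The paper only asserts the equality step; your justification (for each fixed $z$, $h(z,\cdot)$ has constant sign, so $\bigl|\int h\,d|Df^*|\bigr| = \int |h|\,d|Df^*|$, then Tonelli) fills it in without changing the argument.
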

\begin{proof}
Assumptions \eqref{eq:assumption1a} and \eqref{eq:assumption1b} imply that we can replace, when $f=f^*$, the inequality in \eqref{eq:upper_bound_pre} with equality, i.e., 
   \begin{equation*}
       \TVtw(f^*) = \| K f^* \|_{L^1(\partial \Omega)}. 
   \end{equation*}
Furthermore, for any $g$ satisfying $Kg=K f^*$ we can invoke \eqref{eq:upper_bound_pre} with $f=g$ and conclude that 
   \begin{align*}
       \TVtw(f^*) = \| K f^* \|_{L^1(\partial \Omega)} = \| K g \|_{L^1(\partial \Omega)} \le \TVtw(g).  
   \end{align*}
\end{proof}

The following quantity plays an important role in our analysis  of \eqref{def:variational_form}, 
\begin{equation} \label{def:optimal_direction}
\bm q(y)
\in \argmax_{\bm d,\;\|\bm d\|=1}
\left\|
K\!\left(\nabla G(\cdot;y)\cdot \bm d
\right)
\right\|_{L^1(\partial \Omega)} \quad y \in \Omega .
\end{equation}
We remark that $\bm q(y)$ is not unique and that Theorem \ref{thm:exact_recovery} below holds for any $\bm q(y)$ satisfying \eqref{def:optimal_direction}, provided that also \eqref{eq:assumption1a}-\eqref{eq:assumption2} hold. 
Roughly speaking, $\bm{q}(y)$ is a dominant direction of $\nabla G(\cdot;y)$ under the image of $K$.  
Using this concept, our exact recovery result for the basis pursuit problem associated with \eqref{def:variational_form} reads: 
\begin{theorem} \label{thm:exact_recovery}
Let $p = 1$ and $\beta = 0$. Assume that $K1=0$ and that $f^*$, where $Df^* = \vnu^* |Df^*|$, satisfies \eqref{eq:assumption1a}-\eqref{eq:assumption1b}. If  
\begin{equation} \label{eq:assumption2}
\vnu^*(y) = \bm q(y)
\qquad |Df^*|-a.e.,
\end{equation}
then 
\begin{equation} \label{def:basis_pursuit}
    f^* \in \argmin_{g \in BV(\Omega)} \TVw(g) \textnormal{ subject to } Kg = K f^*.
\end{equation}
\end{theorem}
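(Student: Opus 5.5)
The plan is to follow the proof of Theorem~\ref{thm:exact_recovery_pre} essentially verbatim, inserting one extra step that links $\TVw$ to $\TVtw$ at the true source $f^*$. The chain of inequalities we aim for is
\begin{equation*}
\TVw(f^*) = \TVtw(f^*) = \|Kf^*\|_{L^1(\partial\Omega)} = \|Kg\|_{L^1(\partial\Omega)} \le \TVw(g)
\end{equation*}
for every $g \in BV(\Omega)$ with $Kg = Kf^*$. This chain gives \eqref{def:basis_pursuit} directly.

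\textbf{Step 1: $\TVw(f^*) = \TVtw(f^*)$.} By \eqref{def:optimal_direction} with $p=1$, $w(y) = \|K(\nabla G(\cdot;y)\cdot \bm q(y))\|_{L^1(\partial\Omega)}$. Assumption \eqref{eq:assumption2} says $\vnu^*(y) = \bm q(y)$ for $|Df^*|$-a.e.\ $y$, and hence $\tw(y,f^*) = w(y)$ for $|Df^*|$-a.e.\ $y$. Integrating against $|Df^*|$ gives the equality. Only the $|Df^*|$-a.e.\ version is needed, because both functionals are integrals with respect to $|Df^*|$.

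\textbf{Step 2: $\TVtw(f^*) = \|Kf^*\|_{L^1(\partial\Omega)}$.} This is exactly the first step in the proof of Theorem~\ref{thm:exact_recovery_pre}, and I would simply invoke it. For completeness: the only inequality in \eqref{eq:upper_bound_pre} when $p=1$ is $\bigl|\int_\Omega h(z,y)\,d|Df^*|(y)\bigr| \le \int_\Omega |h(z,y)|\,d|Df^*|(y)$, where $h(z,y) = (K[\nabla G(\cdot;y)\cdot\vnu^*(y)])(z)$. Under \eqref{eq:assumption1a} or \eqref{eq:assumption1b}, $h(z,\cdot)$ has a constant sign for each fixed $z$, so equality holds pointwise in $z$. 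Integrating over $\partial\Omega$ and using Fubini/Tonelli, which is legitimate since $h$ is jointly measurable with finite integral, gives the claim.

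\textbf{Step 3: comparison with an arbitrary feasible $g$.} For any $g$ with $Kg = Kf^*$, the data are equal, so $\|Kf^*\|_{L^1(\partial\Omega)} = \|Kg\|_{L^1(\partial\Omega)}$. The upper bound \eqref{eq:upper_bound} with $p=1$ then gives $\|Kg\|_{L^1(\partial\Omega)} \le \TVw(g)$. This closes the chain.

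\textbf{Main obstacle.} There is little genuine difficulty here; the delicate point is Step 1. Because $\bm q(y)$ may be non-unique, one must check that \eqref{eq:assumption2} holds for a selection $\bm q$ for which the maximum in \eqref{def:optimal_direction} is actually attained, so that $\tw(y,f^*) = w(y)$ exactly. Attainment follows from the continuity of $\bm d \mapsto \|K(\nabla G(\cdot;y)\cdot\bm d)\|_{L^1(\partial\Omega)}$ on the compact unit circle, because this map is linear in $\bm d$ followed by a norm. Any maximizer gives the same value $w(y)$, so the equality does not depend on which maximizer is chosen. A minor measurability concern about $\bm q$ never arises, since Step 1 only uses $w$ and $\tw(\cdot,f^*)$, both of which are well defined.
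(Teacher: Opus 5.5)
Your proposal is correct and follows the paper's proof essentially verbatim. It uses the same chain $\TVw(f^*) = \TVtw(f^*) = \|Kf^*\|_{L^1(\partial\Omega)} = \|Kg\|_{L^1(\partial\Omega)} \le \TVw(g)$: the first equality comes from \eqref{eq:assumption2} and the definition of $\bm q$, the second from the sign conditions as in Theorem~\ref{thm:exact_recovery_pre}, and the inequality from \eqref{eq:upper_bound}. Your extra details (pointwise equality in $z$ followed by Tonelli, and attainment of the maximum) only make explicit what the paper leaves implicit; the attainment argument is not actually needed, since the hypothesis already presupposes that $\bm q(y)$ is a maximizer.
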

\begin{proof}
As in the proof of Theorem \ref{thm:exact_recovery_pre}, we find that 
\begin{equation*}
    \TVtw(f^*) = \| K f^* \|_{L^1(\partial \Omega)}. 
\end{equation*}
Assumption \eqref{eq:assumption2} implies that 
\begin{align*}
    \tw(y,f^*) &= \left\|
K\!\left(
\nabla G(\cdot;y)\cdot \vnu^*(y)
\right)
\right\|_{L^1(\partial \Omega)} \\
&= \left\|
K\!\left(
\nabla G                    (\cdot;y)\cdot \bm{q}(y)
\right)
\right\|_{L^1(\partial \Omega)} \\ 
&= w(y).
\end{align*}

For any $g$ satisfying $Kg=K f^*$ we thus find that  
   \begin{align*}
       \TVw(f^*) = \TVtw(f^*) = \| K f^* \|_{L^1(\partial \Omega)} = \| K g \|_{L^1(\partial \Omega)} \le \TVw(g),  
   \end{align*}
where the inequality follows from \eqref{eq:upper_bound}. 
\end{proof}

\subsubsection*{Example 1}
Let us consider a 1D problem with $\Omega=(0,1), \ p = 1, \ \beta = 0$ and assume that $f^*$ is strictly increasing on $\Omega$. Then $|Df^*|(y) = (Df^*)(y)$ for all $y \in \Omega$. Consequently, $$\vnu^*(y)= 1 \in \argmax_{d,\;\|d\|=1}
\left\|
K\!\left(G'(\cdot;y)\cdot d
\right)
\right\|_{L^1(\partial \Omega)} \quad y \in \Omega ,$$
and it follows that \eqref{eq:assumption2} holds with $q(y)=1$, $y\in \Omega$. We conclude that \eqref{def:basis_pursuit} holds for strictly increasing functions provided $K$ and $G$ are such that \eqref{eq:assumption1a} or \eqref{eq:assumption1b} is satisfied with $\vnu^*(y)=1$. The same argument can, of course, be made for strictly decreasing functions.   

\subsubsection*{Example 2}
We consider a true source in 1D with a single jump: 
\begin{equation}
    f^*(x) 
    = \left\{ 
    \begin{array}{cc}
       r-1,  & x<r, \\
       r,  &   x>r,
    \end{array}\right. . \label{eq:heaviside} 
\end{equation}
where $r \in (0,1)$ is fixed.
Also, $\Omega=(0,1), \ p = 1$ and $\beta = 0$. Then, cf. \eqref{eq:upper_bound}, 
\begin{align*}
\nonumber
\|Kf^*\|_{L^1(\partial\Omega)}
&= \int_{\partial\Omega} \left| \int_\Omega K\!\left(
\nabla G(\cdot;y)\cdot \vnu^*(y)
\right) \, |Df^*|(y) \right| dz \\[0.5em]
&= \int_{\partial\Omega} \left| \int_\Omega K\!\left(G'(\cdot;y)\cdot \nu^*(y)
\right) \, d \delta_r (y) \right| dz \\
&= \int_{\partial\Omega} \left| K\!\left(G'(\cdot;r)\cdot 1
\right) \right| dz \\
&=w(r). 
\end{align*}
Moreover, 
\begin{align*}
    TV_w(f^*) &= \int_{\Omega} w(y) \, |Df^*| (y) \\
    &= \int_{\Omega} w(y) \, d \delta_r (y) \\
    &= w(r). 
\end{align*}
Consequently, if $Kg=Kf^*$, then 
\begin{align*}
    TV_w(f^*)=w(r)=\|Kf^*\|_{L^1(\partial \Omega)}=\|Kg\|_{L^1(\partial \Omega)} \leq TV_w(g), 
\end{align*}
where we have used \eqref{eq:upper_bound}, and it follows that $f^*$ obeys \eqref{def:basis_pursuit}. In this example $\vnu^*(y)=0$ for $y \neq r$ and $\vnu^*(r)=1$ and therefore  \eqref{eq:assumption1a}, \eqref{eq:assumption1b} and \eqref{eq:assumption2} all hold. 

\subsection{Optimality conditions}
The optimality conditions associated with the noise-free limit version of \eqref{def:variational_form_pre} are presented in the proposition below. To this end, we define the polar $F^\circ$ of a convex, lower semi-continuous and 1-homogeneous function $F$ by
\begin{equation*}
    F^\circ(y, \Psi(y)) = \sup_{\theta \neq 0} \left\{\frac{\theta \cdot \Psi(y)}{F(y,\theta)}: \theta \in \mathbb{R}^2 \right\}
\end{equation*}
\begin{proposition} \label{prop:opt_gen}
 The function $f^* \in BV(\Omega)$ is a minimizer of 
 \begin{equation} \label{def:basis_pursuit_Lagrange}
     \min_{f \in BV(\Omega)} \int_\Omega \tw(y,\vnu(y))|Df|(y) + \beta \int_{\partial\Omega} w_\partial |f| \, d\sigma \quad \textnormal{s.t.} \quad Kf = Kf^*
 \end{equation}
 if and only if there exists a Lagrange multiplier $\lambda \in L^q(\partial\Omega)$, $\frac{1}{p}+\frac{1}{q} = 1$ and a vector field $\bm{z} \in L^\infty(\Omega)$ with $\nabla \cdot \bm{z} \in L^2(\Omega)$ such that the following conditions hold:
 \begin{enumerate}[(i)]
     \item \textbf{Stationarity condition:} $$K^* \lambda = -\nabla \cdot \bm{z} \quad \textnormal{in} \ \Omega,$$ 
     \item \textbf{Dual constraints:}
       \begin{enumerate}[1.] 
         \item \textbf{Interior:} $\ \ \ \tw^\circ(y,\bm{z}(y)) \leq 1, \quad y \in \Omega$,
         \item \textbf{Boundary:} $|\bm{z}(y) \cdot \bm{n}| \leq \beta w_\partial(y), \quad  y \in \partial\Omega .$
        \end{enumerate}
     \item \textbf{Exact saturation / Complementary slackness:}
        \begin{enumerate}[1.]
         \item \textbf{Interior alignment:} $$\bm{z} \cdot \vnu_{f^*} = \tw(y, \vnu_{f^*}), \quad |D f^*|-a.e.$$
         \item \textbf{Boundary alignment:} $(\bm{z} \cdot \bm{n})f^* = - \beta w_\partial|f^*| \quad \textnormal{on} \ \partial\Omega.$
        \end{enumerate}
 \end{enumerate}
\end{proposition}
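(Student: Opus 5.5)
This is a convex problem with an affine constraint, so I plan to characterize minimizers by Fenchel--Rockafellar duality. The objective $J(f) = \int_\Omega \phi(y,dDf) + \beta\int_{\partial\Omega} w_\partial|f|\,d\sigma$ is convex, positively $1$-homogeneous and lower semicontinuous on $BV(\Omega)$; the interior part is a Finsler TV with $\phi(y,\cdot)$ sublinear, see \eqref{def:phi_TV}. I will therefore write it as a support function,
\begin{equation*}
J(f) = \sup\Big\{ -\int_\Omega f\,\nabla\cdot\bm z\,dy : \bm z \in L^\infty(\Omega;\mathbb{R}^2),\ \nabla\cdot\bm z\in L^2(\Omega),\ \tw^\circ(y,\bm z(y))\le 1,\ |\bm z\cdot\bm n|\le \beta w_\partial \Big\}.
\end{equation*}
The sign convention is chosen so that integrating by parts gives $-\int_\Omega f\nabla\cdot\bm z = \int_\Omega \bm z\cdot dDf - \int_{\partial\Omega}(\bm z\cdot\bm n)f$. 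This representation follows from the Anzellotti pairing together with the duality $\phi^{\circ\circ}=\phi$. The pointwise bounds give $\bm z\cdot\vnu\le \phi(y,\vnu)$ and $-(\bm z\cdot\bm n)f\le \beta w_\partial|f|$. Density of such fields, by the results cited for \eqref{def:phi_TV}, yields equality in the supremum.

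\textbf{Sufficiency.} Suppose $\lambda$ and $\bm z$ satisfy (i)--(iii), and let $g$ be any function with $Kg=Kf^*$. By (ii) the field $\bm z$ is admissible, so $J(g)\ge -\int g\nabla\cdot\bm z = \langle K^*\lambda,g\rangle = \langle\lambda,Kg\rangle = \langle\lambda,Kf^*\rangle = -\int f^*\nabla\cdot\bm z$. Integrating by parts in the last expression and using (iii) gives $\int \bm z\cdot\vnu^*\,d|Df^*| - \int(\bm z\cdot\bm n)f^* = J(f^*)$. Hence $J(g)\ge J(f^*)$, and $f^*$ is a minimizer.

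\textbf{Necessity.} Here I will use Fenchel duality. Write the problem as $\min_f J(f)+\iota_{\{Kf^*\}}(Kf)$. Because $J$ is $1$-homogeneous, $J^*$ is the indicator of the set $\partial J(0)=\{-\nabla\cdot\bm z: \bm z \text{ as above}\}$. The dual problem is therefore $\sup\{\langle\lambda,Kf^*\rangle : K^*\lambda\in\partial J(0)\}$. If strong duality holds and the dual supremum is attained at some $\lambda$, then $K^*\lambda=-\nabla\cdot\bm z$ for an admissible $\bm z$, which gives (i) and (ii). Moreover $J(f^*)=\langle\lambda,Kf^*\rangle=-\int f^*\nabla\cdot\bm z$. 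Writing this as $\int(\phi(y,\vnu^*)-\bm z\cdot\vnu^*)\,d|Df^*| + \int(\beta w_\partial|f^*|+(\bm z\cdot\bm n)f^*)\,d\sigma=0$, both integrands are nonnegative, so each vanishes almost everywhere. This is exactly (iii).

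\textbf{Main obstacle.} The hard step is the existence of a dual maximizer $\lambda\in L^q(\partial\Omega)$, i.e.\ closing the duality gap. For ill-posed $K$ with non-closed range, attainment is a source-condition-type assumption and does not hold automatically. I would first try a Slater/Attouch--Brezis qualification on $\mathrm{range}(K)$, working with a weak$^*$ compactness argument for $\bm z$ in $L^\infty$, using the bounds $\tw\le w$ and the fact that $w$ is bounded below on compact subsets. If this fails, I would state the ``only if'' direction under the standing assumption that the dual problem admits a solution (the source condition), which is the standard interpretation. A secondary technical point is making sense of the normal trace $\bm z\cdot\bm n$ and the pairing with the trace of $f$, which the Anzellotti theory for $\bm z\in L^\infty$ with $\nabla\cdot\bm z\in L^2$ handles.
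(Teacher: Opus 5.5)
Your sufficiency argument is the same as the paper's proof. The paper writes out only this chain: the Fenchel inequality from (ii), integration by parts, (i) to pass to $\langle\lambda,Kg\rangle=\langle\lambda,Kf^*\rangle$, then (iii) to recover $J(f^*)$. It disposes of the ``only if'' direction by calling it an instance of Fenchel--Rockafellar duality, and it describes (i) itself as a source condition in the sense of Burger and Osher. Your necessity sketch makes that remark precise: a dual maximizer gives (i)--(ii), and $J(f^*)=\langle\lambda,Kf^*\rangle$ together with nonnegativity of the two integrands gives (iii). You have also located exactly where it can break.

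That break is a genuine gap, and the repair you would try first cannot work. For $\min_f J(f)+\iota_{\{Kf^*\}}(Kf)$, the Attouch--Brezis qualification requires $\bigcup_{t>0}t\bigl(\{Kf^*\}-K(BV(\Omega))\bigr)=K(BV(\Omega))$ to be a closed subspace of $L^p(\partial\Omega)$. For a compact $K$ with infinite-dimensional range, as for the boundary-observation operators here, $K(BV(\Omega))$ is dense in $\overline{\operatorname{range}K}$ but contained in the non-closed set $\operatorname{range}K$, so it is not closed. Being a proper subspace, it also has empty interior, so Slater fails too.

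Weak$^*$ compactness of the fields does not rescue this. The dual constraint gives $|\bm z_n|\le w$, but the object that must converge is $\lambda_n$. Even an $L^2$ bound on $K^*\lambda_n=-\nabla\cdot\bm z_n$ gives no bound on $\|\lambda_n\|_q$, because $K^*$ does not have closed range. A limit field at best yields $-\nabla\cdot\bm z\in\overline{\operatorname{range}K^*}$, which is only an approximate source condition.

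Without a qualification, minimality need not produce a multiplier at all. For example, take $J=\|\cdot\|_{\ell^1}$ and injective $K=\operatorname{diag}(\sigma_k)$ with $\sigma_k\to 0$. Any $x^*\in\ell^1$ with infinitely many nonzero entries is the unique feasible point, hence optimal. Yet a multiplier would need $|\lambda_k|=1/\sigma_k$, so $\lambda\notin\ell^2$.

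So your fallback is not optional. The ``only if'' half holds exactly when the dual problem is attained, i.e.\ under the source condition, and that is all either your argument or the paper's supports.

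One case you can settle completely. For $\beta=0$, \eqref{eq:upper_bound_pre} and H\"older make every $\lambda$ with $\|\lambda\|_q\le 1$ dual feasible; this is the content of the Remark after the proposition. So whenever $J(f^*)=\|Kf^*\|_{L^p(\partial\Omega)}$, as under the hypotheses of Theorem~\ref{thm:exact_recovery_pre}, the norming functional of $Kf^*$ attains the dual value. Your argument then yields (i)--(iii).
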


\begin{proof}
    The result is an instance of Fenchel–Rockafellar duality for the weighted total variation functional; the dual vector field $\bm z$ and the saturation conditions follow the  predual formulation of TV \cite{chambolle2010introduction}, and condition (i) is the source condition in the sense of \cite{burger2004convergence}. The derivation of the proof is therefore omitted, but we present how these conditions can be used to verify that $f^*$ is indeed a minimizer. First, note the general Fenchel inequality
    \begin{equation*}
        \bm{\psi}(y) \cdot \vnu(y) \leq F(y,\vnu(y)) \quad \textnormal{whenever} \quad F^\circ(y,\bm{\psi}(y)) \leq 1.
    \end{equation*}

    Now, let $g \in BV(\Omega)$ be any feasible solution, i.e., $Kg = Kf^*$. Using the Fenchel inequality and all of the conditions above, we get
    \begin{align*}
        \int_\Omega \tw(y,\vnu_g(y))\,|Dg|(y) + \beta \int_{\partial\Omega} w_\partial|g| \, ds &\geq
        \int_\Omega (\bm{z} \cdot \vnu_g)\,|Dg|(y) + \int_{\partial\Omega} |\bm{z}\cdot\bm{n}|\,|g| \, ds \\ &\geq 
        \int_\Omega \bm{z}\cdot Dg(y) - \int_{\partial\Omega}(\bm{z}\cdot \bm{n})g \, ds \\ &=
        -\int_\Omega (\nabla\cdot \bm{z}) g \, dy + \int_{\partial\Omega} (\bm{z}\cdot \bm{n})g \, ds - \int_{\partial\Omega}(\bm{z}\cdot \bm{n})g \, ds \\ &=
        \langle K^*\lambda, g\rangle = \langle \lambda, Kg\rangle = \langle \lambda, Kf^*\rangle = \langle K^*\lambda, f^*\rangle \\ &=
        -\int_\Omega (\nabla \cdot \bm{z})f^* \, dy \\ &=
        \int_\Omega \bm{z} \cdot Df^*(y) - \int_{\partial\Omega} (\bm{z}\cdot \bm{n})f^* \, ds \\ &= 
        \int_\Omega (\bm{z} \cdot \vnu_{f^*})\,|Df^*|(y) - \int_{\partial\Omega} (\bm{z}\cdot \bm{n})f^* \, ds \\ &=
        \int_\Omega \tw(y,\vnu_{f^*})\,|Df^*|(y) + \beta \int_{\partial\Omega} w_\partial|f^*| \, ds
    \end{align*}
\end{proof}

Note that if $\beta = 0$ the dual constraint for the boundary implies that  $\bm{z} \cdot \bm{n} = 0$ along $\partial \Omega$.

\begin{remark}
    What might seem to be the most challenging part of the optimality conditions to fulfill is the Euler-Lagrange equation. However, for any $\lambda \in L^q(\partial\Omega)$ we have (in a distributional sense via $\varphi \in C^1_c(\Omega)$) the representation 
    \begin{align*}
        \langle K^*\lambda, \varphi \rangle &= \langle \lambda, K\varphi \rangle 
        \\ &= \left\langle \lambda, \int_\Omega K(\nabla G(\cdot;y)) \cdot D\varphi(y)  \right\rangle \\ &= \int_\Omega\left\langle \lambda, K(\nabla G(\cdot;y)) \right\rangle
        \cdot D\varphi(y) \\ &= -\int_\Omega \nabla \cdot \langle \lambda, K(\nabla G(\cdot;y)) \rangle \, \varphi(y)\, dy \\ &= - \int_\Omega (\nabla \cdot\bm{z}_\lambda(y)) \, \varphi(y) \, dy,
    \end{align*}
    where we have defined 
    \begin{equation} \label{eq:z_formula}
        \bm{z}_\lambda(y) = \langle \lambda,K(\nabla G(\cdot;y)) \rangle.
    \end{equation} 
    Taking the Euclidean inner product of \eqref{eq:z_formula} with $\theta \in \mathbb{R}^2$ and applying H\"older's inequality give
    \begin{align*}
        \bm{z}_\lambda(y)\cdot\theta &= \langle \lambda, K(\nabla G(\cdot;y)) \rangle \cdot \theta \\ &= \langle \lambda, K(\nabla G(\cdot;y) \cdot \theta) \rangle \\ &\leq \|\lambda\|_{q} \|K(\nabla G(\cdot,y) \cdot \theta) \|_{p} \\ &= \|\lambda\|_{q}\tw(y,\theta).
    \end{align*}
    Taking supremum over $\theta \neq 0$ yields
    \begin{equation*}
        \tw^\circ(y,\bm{z}(y)) \leq \|\lambda\|_q \quad \forall \, y \in \Omega.
    \end{equation*}
    In particular, if $\|\lambda\|_q\le 1$, then the associated field $\bm{z}_\lambda$ satisfies the interior dual constraint
    $\tw^\circ(y,\bm{z}_\lambda(y))\le 1$ uniformly in $y$.
    Thus, for the chosen weight $\tw$, interior dual feasibility reduces to a global norm bound on the Lagrange multiplier rather than a pointwise requirement. 
\end{remark}

From these considerations, we get the simplified optimality conditions
\begin{corollary}\label{col:simp_cond}
 Let $\tw$ be defined as in \eqref{def:tw}. Then, the function $f^* \in BV(\Omega)$ is a minimizer of \eqref{def:basis_pursuit_Lagrange}
 if there exists a Lagrange multiplier $\lambda \in L^q(\partial\Omega)$ such that:
 \begin{enumerate}[(i)]
    \item \textbf{Global radial condition:} $$\|\lambda\|_q \leq 1$$
    \item \textbf{Exact saturation / Complementary slackness:}
         $$\bm{z}_\lambda \cdot \vnu_{f^*} = \tw(y, \vnu_{f^*}), \quad |Df^*|-a.e.,$$
    \item \textbf{Boundary trace:} $|\bm z_\lambda(x) \cdot \bm n| \leq \beta w_\partial(x),  \quad x \in \partial\Omega.$ 
     \item \textbf{Boundary alignment:} $(\bm{z} \cdot \bm{n})f^* = - \beta w_\partial|f^*| \quad \textnormal{on} \ \partial\Omega.$
 \end{enumerate}
Here, $\bm{z}_\lambda$ is the vector field defined in \eqref{eq:z_formula}.
\end{corollary}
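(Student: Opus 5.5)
The plan is to verify the sufficiency direction of Proposition \ref{prop:opt_gen} with $\bm z := \bm z_\lambda$ from \eqref{eq:z_formula}. I will not invoke the full Fenchel--Rockafellar duality. The chain of inequalities in the proof of Proposition \ref{prop:opt_gen} only uses the stationarity condition (i), the two dual constraints in (ii), and the two alignment conditions in (iii). So it suffices to check that, under hypotheses (i)--(iv) of the corollary, the pair $(\lambda,\bm z_\lambda)$ satisfies each of these. Then the same chain gives, for every feasible $g$ with $Kg=Kf^*$, that the objective of \eqref{def:basis_pursuit_Lagrange} at $g$ is at least its value at $f^*$.

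First, stationarity. By the preceding Remark, for every $\varphi\in C^1_c(\Omega)$ we have $\langle K^*\lambda,\varphi\rangle = -\int_\Omega (\nabla\cdot \bm z_\lambda)\varphi\,dy$, so $K^*\lambda = -\nabla\cdot\bm z_\lambda$ in the distributional sense. Since $K^*\lambda\in L^2(\Omega)$, this also gives $\nabla\cdot\bm z_\lambda\in L^2(\Omega)$. Second, the interior dual constraint. The Remark showed via H\"older that $\tw^\circ(y,\bm z_\lambda(y))\le \|\lambda\|_q$, so hypothesis (i) gives $\tw^\circ\le 1$. Third, hypotheses (iii), (iv) and (ii) of the corollary are literally the boundary dual constraint, the boundary alignment and the interior alignment of the proposition.

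It remains to justify the regularity that the integration-by-parts steps in the chain require, namely $\bm z_\lambda\in L^\infty(\Omega)$ with a well-defined normal trace. For $L^\infty$ I would use the bound $|\bm z_\lambda(y)\cdot\theta|\le\|\lambda\|_q\,\tw(y,\theta)\le w(y)$ for $|\theta|=1$. This gives boundedness provided $w$ is bounded on $\Omega$, which I would state as a standing assumption implicit in \eqref{def:w}. With $\bm z_\lambda\in L^\infty$ and $\nabla\cdot\bm z_\lambda\in L^2$, the Anzellotti pairing makes $\bm z_\lambda\cdot\bm n\in L^\infty(\partial\Omega)$ meaningful, and it justifies the Green formula $\int_\Omega \bm z\cdot Dg = -\int_\Omega(\nabla\cdot\bm z)g + \int_{\partial\Omega}(\bm z\cdot\bm n)g$ for $g\in BV(\Omega)$.

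The main obstacle I expect is precisely this regularity step. Near $\partial\Omega$, $K(\nabla G(\cdot;y))$ may degenerate, and the exchange of $\lambda$-pairing and divergence in the Remark is only distributional. If boundedness of $w$ fails, I would first try to recover the inequality chain by approximating $g$ by smooth functions (strict convergence in $BV$). I would then use lower semicontinuity of the weighted TV together with the pointwise Fenchel inequality $\bm z_\lambda\cdot\vnu_g\le\tw(y,\vnu_g)$, which holds everywhere by the Remark.
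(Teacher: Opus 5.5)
Your proposal is correct and follows the paper's route: the paper also obtains the corollary by taking $\bm z=\bm z_\lambda$ in the sufficiency chain of Proposition~\ref{prop:opt_gen}. Stationarity and interior dual feasibility come from the preceding Remark, and the remaining conditions are exactly hypotheses (ii)--(iv); your regularity discussion goes beyond the paper, which assumes it silently, but in your fallback you would need Reshetnyak continuity under strict convergence (so that the objective at $g$ bounds $\limsup_k$ of the objective at $g_k$) rather than lower semicontinuity, which gives the wrong inequality direction.
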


\begin{corollary}
    Let $\tw$ be defined as in \eqref{def:tw} and fix any "location-direction" pair $(y_0, \theta_0).$ Assume $p = q = 2$. Then the dual candidate
    \begin{equation*}
        \lambda_{(y_0,\theta_0)} := \frac{K(\nabla G(\cdot, y_0) \cdot \theta_0)}{\|K(\nabla G(\cdot, y_0) \cdot \theta_0)\|_2},
    \end{equation*}
    satisfies the global radial condition (i) of Corollary \ref{col:simp_cond}. Furthermore, if $\theta_0 = \vnu_{f^*}(y_0)$, it also fulfils exact saturation (ii) at $y_0$.
\end{corollary}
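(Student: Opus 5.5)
The plan is to verify the two claims directly from the definition of $\bm z_\lambda$ in \eqref{eq:z_formula}, with $p=q=2$. Throughout I assume the nondegeneracy $\|K(\nabla G(\cdot;y_0)\cdot\theta_0)\|_2\neq 0$, which is implicit in the statement since otherwise $\lambda_{(y_0,\theta_0)}$ is undefined. I also take $\theta_0$ to be a unit vector, as a jump direction $\vnu_{f^*}(y_0)$ is. Abbreviate $h := K(\nabla G(\cdot;y_0)\cdot\theta_0)\in L^2(\partial\Omega)$, so that $\lambda_{(y_0,\theta_0)} = h/\|h\|_2$.

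\textbf{Step 1: the global radial condition (i).} By construction $\lambda_{(y_0,\theta_0)}$ is a normalized element of $L^2(\partial\Omega)$. Hence $\|\lambda_{(y_0,\theta_0)}\|_q = \|\lambda_{(y_0,\theta_0)}\|_2 = 1 \le 1$, and condition (i) of Corollary \ref{col:simp_cond} holds.

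\textbf{Step 2: exact saturation (ii) at $y_0$.} Take $\theta_0 = \vnu_{f^*}(y_0)$. Using the linearity identity already employed in the Remark, namely $\bm z_\lambda(y)\cdot\theta = \langle \lambda, K(\nabla G(\cdot;y)\cdot\theta)\rangle$, we evaluate at $y=y_0$ and $\theta=\theta_0$:
\begin{equation*}
\bm z_\lambda(y_0)\cdot\vnu_{f^*}(y_0) = \Big\langle \frac{h}{\|h\|_2}, h\Big\rangle_{L^2(\partial\Omega)} = \|h\|_2 = \tw(y_0,\vnu_{f^*}(y_0)),
\end{equation*}
where the last equality is definition \eqref{def:tw} with $p=2$. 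This is precisely the saturation identity (ii) at the point $y_0$.

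\textbf{Main obstacle.} The only delicate point is interpretive rather than computational. Condition (ii) is stated $|Df^*|$-a.e., whereas the corollary claims saturation only at the single point $y_0$. It is therefore a pointwise identity, meaningful when $\vnu_{f^*}(y_0)$ is defined (e.g.\ $y_0$ a Lebesgue point of the polar decomposition), and I would state it that way. One could also remark that this is the equality case of the Cauchy–Schwarz step in the Remark: saturation holds exactly because $\lambda$ is parallel to $h$.
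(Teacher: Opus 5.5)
Your proof is correct and follows the paper's own argument. The radial condition follows from the normalization $\|\lambda_{(y_0,\theta_0)}\|_2=1$, and saturation at $y_0$ follows by inserting $\lambda_{(y_0,\theta_0)}$ into \eqref{eq:z_formula}, which gives $\bm z_\lambda(y_0)\cdot\theta_0=\|K(\nabla G(\cdot;y_0)\cdot\theta_0)\|_2=\tw(y_0,\theta_0)$. Your explicit nondegeneracy assumption and your pointwise reading of (ii) are sensible clarifications that the paper leaves implicit.
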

\begin{proof}
    Clearly, $\|\lambda_{(y_0,\theta_0)}\|_2 = 1$. Setting the given choice of $\lambda$ into \eqref{eq:z_formula}, it follows that the complementary slackness condition is satisfied at $y_0$.
\end{proof}

This can be interpreted as follows: The TV atom $\chi_R$ associated with a simple set $R$ (cf. \cite{fleming1957functions, AmbrosioCasellesMasnouMorel2001}) is generated by a continuum of infinitesimal oriented boundary elements $(y_0,\theta_0)$ of its perimeter. Under the weight $\tw$ these elements are treated without spatial bias in the sense that every $(y_0,\theta_0)$ is equally dual-feasible through its own unit-length canonical certificate/Lagrange multiplier $\lambda_{(y_0,\theta_0)}$.


Admittedly, to certify a full region, i.e., to recover $f^* = \chi_{R}$, we would need a saturating dual certificate in a $|Df^*|$-a.e. sense. 
The existence of such a certificate is far from trivial and relies on the domain $\Omega$, the shape $R$ and the forward operator $K$. In the unweighted setting, exact recovery of this type has been established for certain shapes under specific forward operators, both for isotropic TV \cite{DeCastroDuvalPetit2024} and in a  anisotropic TV setting \cite{HollerWirth2024}.

Nevertheless, we will attempt to further highlight why the weights are necessary to certify extended shapes at different depths throughout the domain $\Omega$. To this end, assume the true source to be the characteristic function $\chi_E$, where $E \subset \subset \Omega$ has a smooth boundary $\partial E$.

Note that on characteristic functions the weighted total variation reduces to the weighted perimeter, 
$$\TVtw(\chi_E)=\int_{\partial E} \tw(y,\bm n_E)ds.$$ 
Consequently, if $\chi_E$ solves \eqref{def:basis_pursuit_Lagrange}, it is a stationary point of the Lagrangian
$$\mathcal L(E,\lambda)=\int_{\partial E}\tw(y,\bm n_E)\,ds+\langle\lambda,K\chi_E-d\rangle,$$
where $\lambda$ is an associated Lagrange multiplier. 

To derive the associated Euler–Lagrange equation, consider a smooth normal deformation
\begin{equation*}
    \partial E_t = \{\, y + t\,\varphi(y)\,\bm n_E(y) : y \in \partial E \,\},
\end{equation*}
with $\varphi$ an arbitrary smooth scalar velocity. The stationarity condition of the Lagrangian reads
\begin{equation*}
    \frac{d}{dt}\, \mathcal{L}(E_t,\lambda)\bigg|_{t=0} = 0 \quad \forall\, \varphi .
\end{equation*}
By the theory of shape derivatives \cite{delfour2011shapes}, the first variation of the constraint term is
\begin{equation*}
    \frac{d}{dt}\, \langle \lambda,\, K\chi_{E_t}\rangle \bigg|_{t=0}
    = \int_{\partial E} K^*\lambda \;\varphi \; ds,
\end{equation*}
while the first variation of the weighted perimeter (in which the normal $\bm n_{E_t}$ also varies with $t$) is
\begin{equation*}
    \frac{d}{dt} \int_{\partial E_t} \tw(y, \bm n_{E_t})\, ds \bigg|_{t=0}
    = \int_{\partial E} \kappa_{\tw}\; \varphi \; ds,
\end{equation*}
where $\kappa_{\tw}$ is the weighted anisotropic curvature \cite{dogan2012first}. Stationarity for all $\varphi$ therefore gives
\begin{equation*}
    \int_{\partial E} \big(\kappa_{\tw} + K^*\lambda\big)\,\varphi \; ds = 0
    \quad \forall\, \varphi,
\end{equation*}
and hence
\begin{equation}\label{eq:euler_lagrange_shape}
    \kappa_{\tw} + K^*\lambda = 0 \quad \text{on } \partial E .
\end{equation}
We emphasize that \eqref{eq:euler_lagrange_shape} is a \emph{necessary}
(stationarity) condition that must hold simultaneously at every point of
$\partial E$; it does not by itself guarantee that $\chi_E$ is a global
minimizer.

For simplicity, assume now that $\tw(y,\vnu) = w(y)$, i.e.\ that the weight is isotropic and that $E = B_r(x_0)$, i.e., a ball of radius $r$ centered at $x_0 \in \Omega$. The weighted curvature then simplifies to
\begin{equation*}
    \kappa_{\tw} = \frac{w}{r} + \nabla w \cdot \bm n,
\end{equation*}
so that \eqref{eq:euler_lagrange_shape} becomes
\begin{equation}\label{eq:curvature_cond}
    K^*\lambda = -\frac{w}{r} - \nabla w \cdot \bm n \quad \text{on } \partial B_r(x_0) .
\end{equation}

Let us now discuss the realism of satisfying \eqref{eq:curvature_cond} for different choices of weight functions $w$. To this end, let us define $\rho(y) := \operatorname{dist}(y,\partial\Omega)$ as the distance from $y$ to the boundary, which we refer to as the depth of $y$. 
We also assume, which is the case for many inverse problems, that $K^*\lambda$ decays exponentially into the interior:
\begin{equation*}
    |K^*\lambda(y)| \;\lesssim\; C(\rho(y))\, e^{-\tau \rho(y)},
\end{equation*}
where $C(\cdot)$ has at most polynomial dependence on $\rho$.

Note that \eqref{eq:curvature_cond} must hold at every point of $\partial B_r$ for the same multiplier $\lambda$. At a single point, the equation can always be satisfied; the difficulty is simultaneity along the curve $\partial B_r$. With unweighted TV, i.e., $w \equiv 1$, the right hand side becomes the constant $-\frac{1}{r}$, which cannot match the (assumed) exponentially decay of $K^*\lambda$ for curves spanning over multiple depths. A weight varying only polynomially in $\rho$ also fails for the same reason. Hence, to allow an approximate recovery of interfaces that span a range of depths, it appears that the weight function must have the same exponential dependence on $\rho$ as $K^*\lambda$. This is achieved in the present paper by defining $w$ in terms of images under $K$ of the gradient of appropriate Green's functions. 

\section{Numerical results}\label{sec:numerics}

\subsubsection*{Discretization}

All experiments are carried out on the unit square $\Omega = (0,1) \times (0,1)$, discretized by a uniform grid with spacing $h = 1/128$. The forward operator $K: L^2(\Omega) \to L^2(\partial\Omega)$ is the map $f \mapsto u|_{\partial\Omega}$, where $u$ solves the screened Poisson equation \eqref{eq:screenedPoisson}. The PDE is discretized with standard piecewise-linear finite elements on the triangulation associated with the grid, and the resulting discretized forward operator is denoted by $\mathsf{K} = \mathsf{T}\mathsf{A}^{-1} \in \mathbb{R}^{M \times N}$, where $\mathsf{A}$ and $\mathsf{T}$ are the matrices associated with the screened Poisson equation and the trace operator, respectively. 

To describe how the weights are generated, let $\mathsf K = \mathsf U\mathsf S\mathsf V^\top$ be the (truncated) singular value decomposition of the forward matrix, with singular values $s_1,\dots,s_r$ and right singular vectors $\bm v_1,\dots,\bm v_r\in\mathbb R^N$.
Furthermore, we denote by $\mathsf{L}$ the discretization of the Laplace operator $-\Delta$, used for the representation of the Green's functions, cf. \eqref{label:source_rep_dirichlet}.  
The directional weight at a point $y$ is the magnitude of the dipole sensitivity,
\begin{equation}
\label{eq:sensitivity}
   s_{\bm\nu}(y) = \bigl\| \mathsf K\,\nabla_{\bm\nu} G(\cdot\,;y) \bigr\|_2,
   \qquad G(\cdot\,;y) = \mathsf L^{-1}\delta_y ,
\end{equation}
i.e., the discrete counterpart of $\tw(y,\bm\nu)
= \|K(\nabla G(\cdot\,;y)\cdot\bm\nu)\|_{L^2(\partial \Omega)}$.

Evaluating \eqref{eq:sensitivity} directly would require one dipole solve per cell, i.e.\ $N$ solves.  Index the grid cells by $e$, with $y_e$ the corresponding point and $\delta_e := \delta_{y_e}$ the discrete source at that cell. The directional derivative $\nabla_{\bm\nu}$ acts on the source location $y$, while $\mathsf{L}^{-1}$ acts on the field variable; thus linearity of $\mathsf{L}^{-1}$ gives $\nabla_{\bm\nu}\mathsf{L}^{-1}\delta_e = \mathsf{L}^{-1}\nabla_{\bm\nu}\delta_e$. Using in addition the symmetry $\mathsf L = \mathsf L^\top$ to write $\bm v_k^\top\mathsf{L}^{-1} = \bm z_k^\top$ with $\bm z_k = \mathsf{L}^{-1}\bm v_k$ gives
\begin{equation*}
    \bm v_k^\top \mathsf{L}^{-1}\nabla_{\bm\nu}\delta_e = (\mathsf{L}^{-1}\bm v_k)^\top\nabla_{\bm\nu}\delta_e = \bm z_k^\top \nabla_{\bm\nu}\delta_e = (\nabla_{\bm\nu}^\top\bm z_k)_e = -(\nabla_{\bm\nu}\bm z_k)_e,
\end{equation*}
where the last equality follows from integration by parts. Expanding \eqref{eq:sensitivity} in the singular basis of $\mathsf{K}$ and using the expression above yields
\begin{equation}
\label{eq:reciprocity}
   \bigl\| \mathsf K\,\nabla_{\bm\nu} G(\cdot\,;y_e)\bigr\|_2^2
   = \sum_{k=1}^{r} s_k^2\,
     \bigl(\bm v_k^\top \mathsf L^{-1}\,\nabla_{\bm\nu}\delta_e\bigr)^2
   = \sum_{k=1}^{r} s_k^2\,\bigl(\nabla_{\bm\nu}\,\bm z_k\bigr)_e^2.
\end{equation}
Thus pushing each detectable mode/singular vector once through $\mathsf L^{-1}$ and reading off its discrete gradient recovers the sensitivity at \emph{every} cell and in \emph{every} direction simultaneously, replacing $N$ per-cell solves by $r$ mode solves ($r\ll N$).

Stacking the gradients of the smoothed modes column-wise gives
\[
   {\mathsf W}_x = \mathsf D_x\,[\,\bm z_1\;\cdots\;\bm z_r\,]\,\mathsf S_r,
   \qquad
   {\mathsf W}_y = \mathsf D_y\,[\,\bm z_1\;\cdots\;\bm z_r\,]\,\mathsf S_r,
   \qquad \mathsf S_r = \operatorname{diag}(s_1,\dots,s_r),
\]
with $\mathsf D_x, \mathsf D_y$ the discrete partial-derivative operators.
For each evaluation cell $e$ (paired with one $x$- and one $y$-edge) the corresponding rows form the local weight
\[
   {\mathsf W}^{(e)} =
   \begin{bmatrix} ({\mathsf W}_x)_{e,:} \\[2pt] ({\mathsf W}_y)_{e,:} \end{bmatrix}^{\!\top}
   \in \mathbb R^{r\times 2},
\]
whose $2\times 2$ Gram matrix is precisely the metric whose quadratic form in $\bm\nu$ equals $s_{\bm\nu}(y_e)^2$ in \eqref{eq:sensitivity}.

With $(\nabla_h \bm f)_e = \bigl((\mathsf D_x \bm f)_e,(\mathsf D_y \bm f)_e\bigr)^\top$, the discrete directionally weighted total variation ($p=2$) is the group ($\ell_{2,1}$) norm of the weighted gradient,
\begin{equation*}
   \mathrm{TV}_{\tw}(\bm f)
   = \sum_{e=1}^{N_{\mathrm{eval}}} \bigl\| {\mathsf W}^{(e)}(\nabla_h \bm f)_e \bigr\|_2 .
\end{equation*}

From this, it is also straightforward to compute the discretized isotropic weights corresponding to \eqref{def:w} by setting
\begin{equation*}
\mathrm{TV}_{w}(\bm f)
  = \sum_{e=1}^{N_{\mathrm{eval}}} \|\mathsf W^{(e)}\|_2 \,\bigl\|(\nabla_h \bm f)_e\bigr\|_2 .
\end{equation*}

Equation \eqref{eq:reciprocity} shows that each mode's contribution $(\nabla_{\bm\nu}\bm z_k)_e^2$ involves the factor $s_k^2$. In our experiments, removing this factor - i.e. setting $s_k = 1$ for all $k$ in \eqref{eq:reciprocity} - improves recovery in practise, and all numerical results below use this choice. For a more in-depth discussion of employing such a filter for weight generation, see \cite{ENS25}. Note that our analysis is presented in terms of an abstract forward operator $K$. Hence, such a filter can also be incorporated in the infinite dimensional setting by considering it to be part of the action of $K$. 

\subsubsection*{Test problems}
 
We consider two test problems, which differ in the geometry of the unknown. The first is an \emph{inverse source problem}: the goal is to recover a localized source $f^\dagger$ supported in the interior of $\Omega$ from boundary measurements of the corresponding solution $u$. The ground truth has compact support strictly inside $\Omega$, so feature creation on $\partial \Omega$ is undesirable. The second is a \emph{gravimetry-inspired problem}: the unknown $f^\dagger$ models a piecewise constant density and the discontinuity of interest is the interface between two materials of different density, which need not stay away from the boundary.
 
This distinction directly informs the choice of Green's function and, with it, the form of the regularizer. As discussed in Section~\ref{sec:preliminaries}, the Dirichlet representation~\eqref{label:source_rep_dirichlet} naturally produces a boundary term with weight $w_\partial$ and is therefore the appropriate setting whenever the regularization should discourage features on $\partial\Omega$, i.e., $\beta > 0$. The Neumann representation~\eqref{eq:source_representation}, by contrast, contains no boundary term, and is the appropriate setting when boundary features are admissible, i.e., $\beta = 0$. We therefore use the Dirichlet-based weights with $\beta > 0$ for the inverse source problem and the Neumann-based weights with $\beta = 0$ for the gravimetry problem.

If not explicitly stated otherwise, we employed $p=2$ in the definition of the weight functions $w$ and $\tw$, cf. \eqref{def:w} and \eqref{def:tw}. 

\subsection*{Inverse Source Problem}

The first simulations concern the recovery of spherical shapes of different radius. We observe in Figure \ref{fig:circles} that a source with small radius is quite well recovered when located close to the boundary, and that relatively larger sources are also recoverable deeper into the domain. Let us mention that when we ran experiments with deep sources with small radii, the spatial extent of the reconstructed source was too large and the magnitude too small.

These observations can be linked to assumptions \eqref{eq:assumption1a} and \eqref{eq:assumption1b}. Admittedly, it is unrealistic that \eqref{eq:assumption1a} or \eqref{eq:assumption1b} is perfectly satisfied in a practical setting involving elliptic PDEs, but they can nevertheless shed some light on recoverability: Panels (a) and (b) of Figure \ref{fig:traces} show $(K\!\left[\nabla G(\cdot;x)\cdot \vnu^*(x)\right])(z_R)$ and $(K\!\left[\nabla G(\cdot;x)\cdot \vnu^*(x) \right])(z_L)$ as functions of the polar angle of $x$ located at a small circle (a) and a large circle (b). Here, $z_R$ (red) and $z_L$ (blue) are boundary points. In neither of the cases, $(K\!\left[\nabla G(\cdot;x)\cdot \vnu^*(x)\right])(z_R) \geq 0$ for all $x$ at the circle or $(K\!\left[\nabla G(\cdot;x)\cdot \vnu^*(x) \right])(z_L) \geq 0$ for all $x$ at the circle, but for the large source this is closer to being satisfied.

\begin{figure}[H]
    \centering
    \begin{subfigure}[t]{0.3\linewidth}        
        \centering
        \includegraphics[trim=0 0 0 19, clip, width=\linewidth]{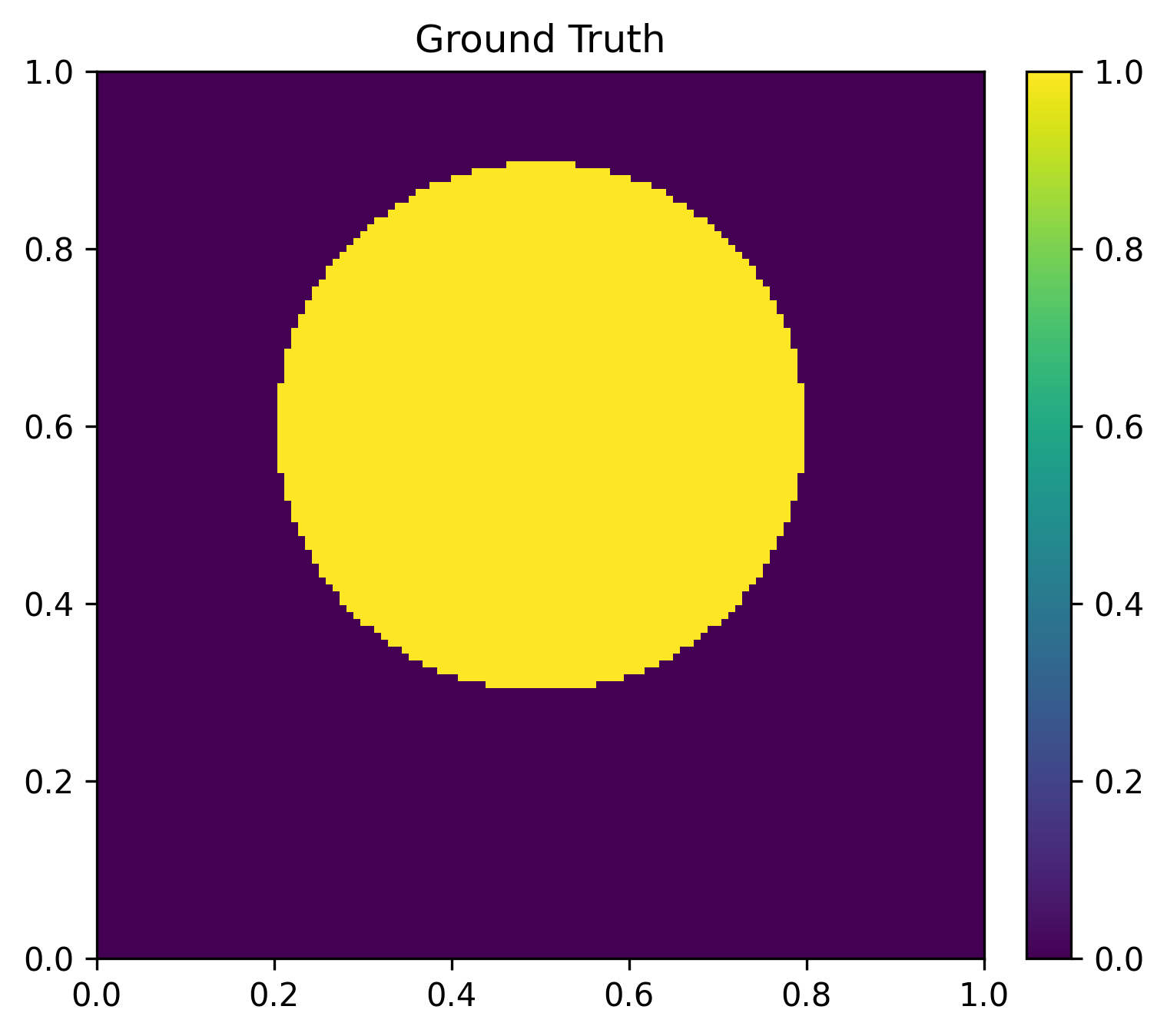}
        \caption{True source}
    \end{subfigure}
    \begin{subfigure}[t]{0.3\linewidth}        
        \centering
        \includegraphics[trim=0 0 0 19, clip, width=\linewidth]{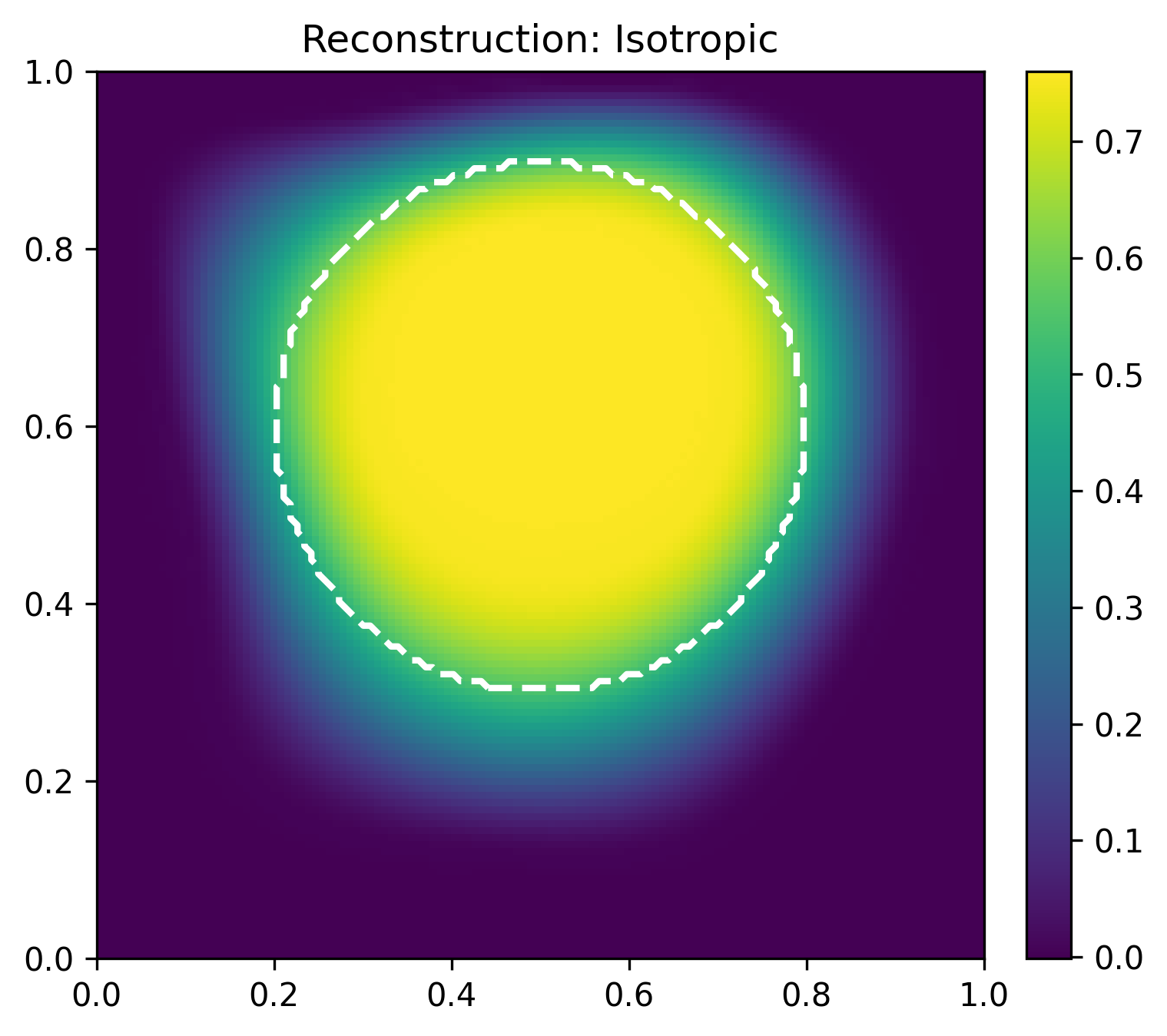}
        \caption{Recovery using $\TVw$}
    \end{subfigure}
    \begin{subfigure}[t]{0.3\linewidth}        
        \centering
        \includegraphics[trim=0 0 0 19, clip, width=\linewidth]{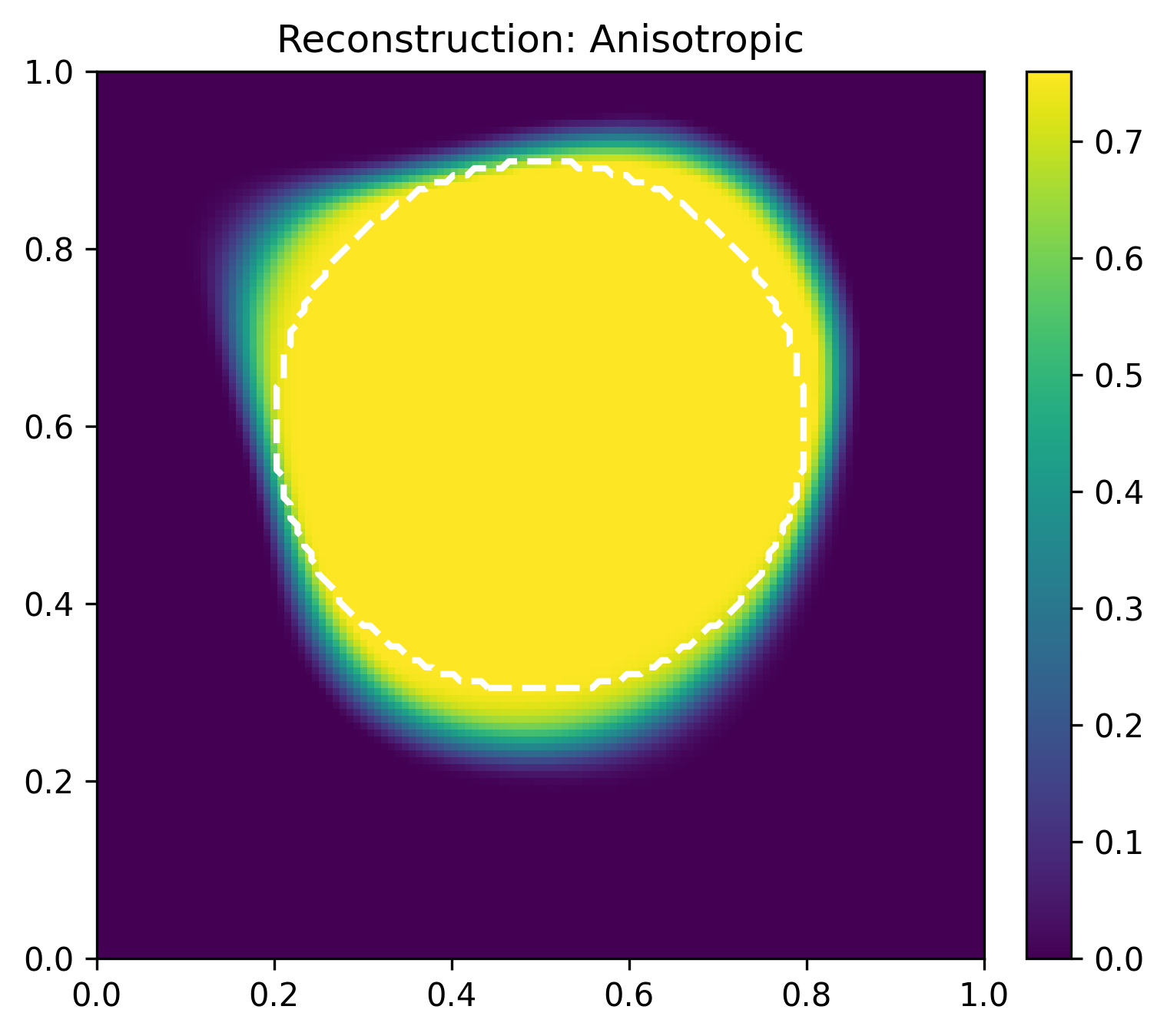}
        \caption{Recovery using $\TVtw$}
    \end{subfigure}
    \begin{subfigure}[t]{0.3\linewidth}        
        \centering
        \includegraphics[trim=0 0 0 19, clip, width=\linewidth]{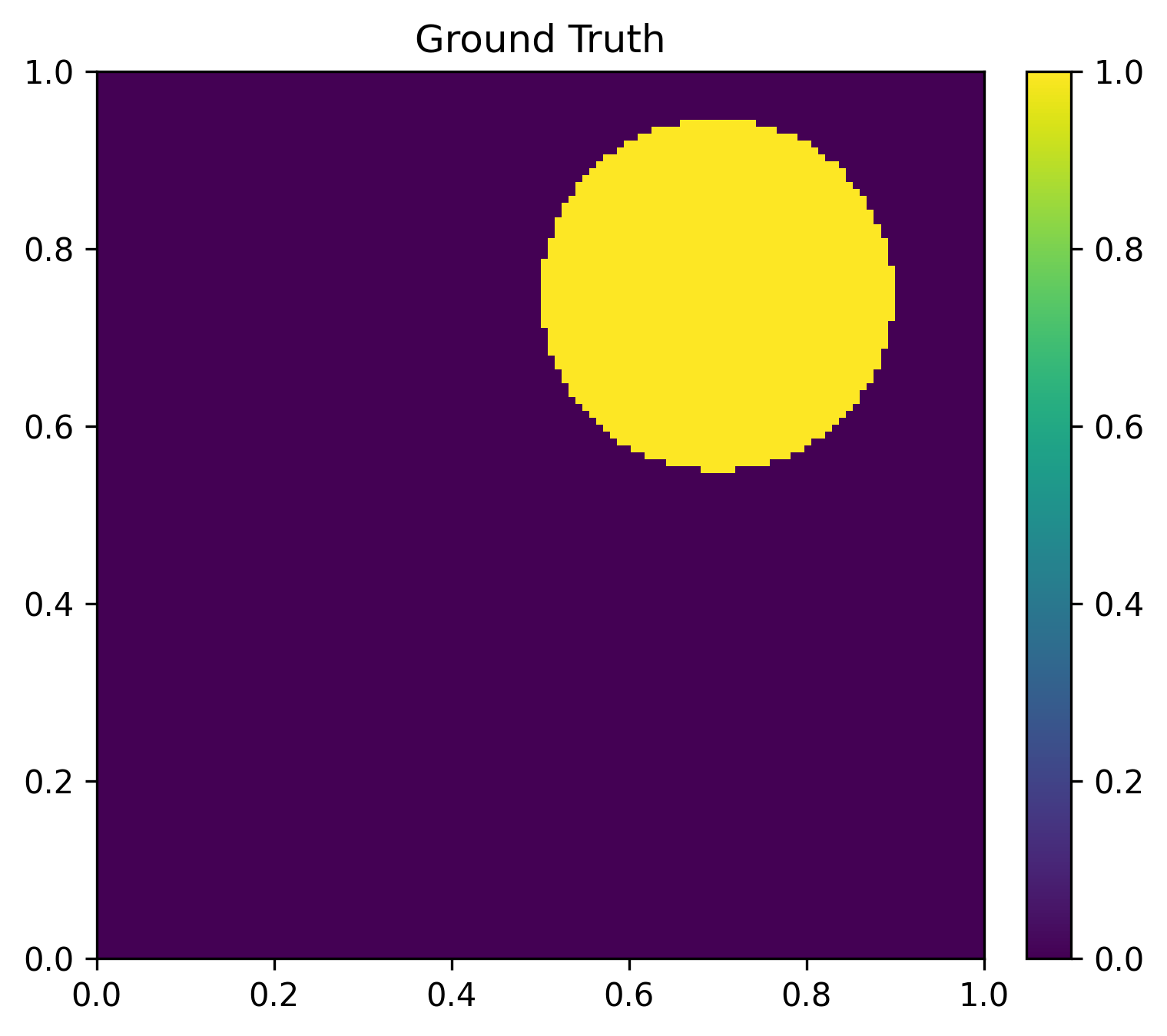}
        \caption{True source}
    \end{subfigure}
        \begin{subfigure}[t]{0.3\linewidth}        
        \centering
        \includegraphics[trim=0 0 0 19, clip, width=\linewidth]{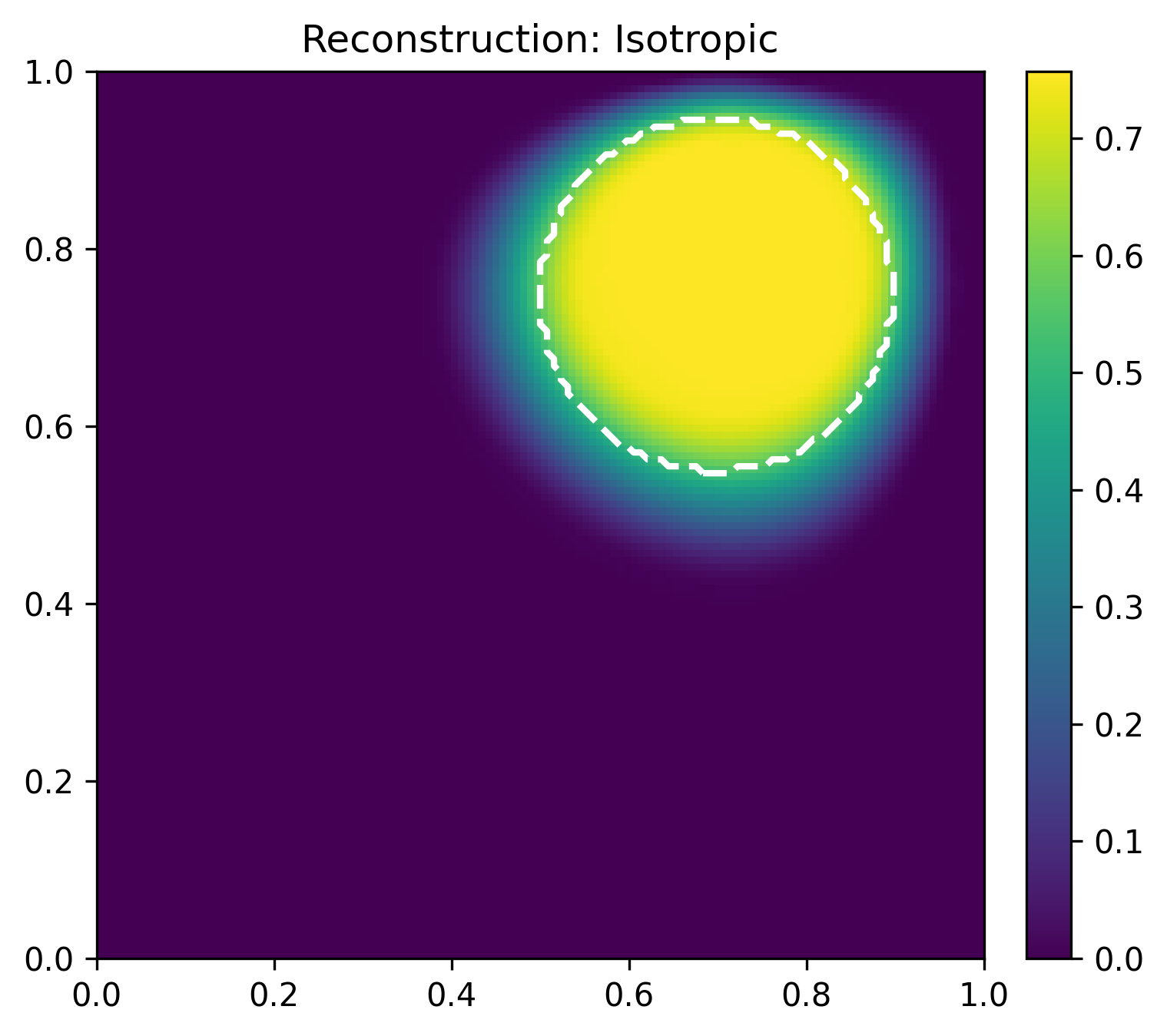}
        \caption{Recovery using $\TVw$}
    \end{subfigure}
    \begin{subfigure}[t]{0.3\linewidth}        
        \centering
        \includegraphics[trim=0 0 0 19, clip, width=\linewidth]{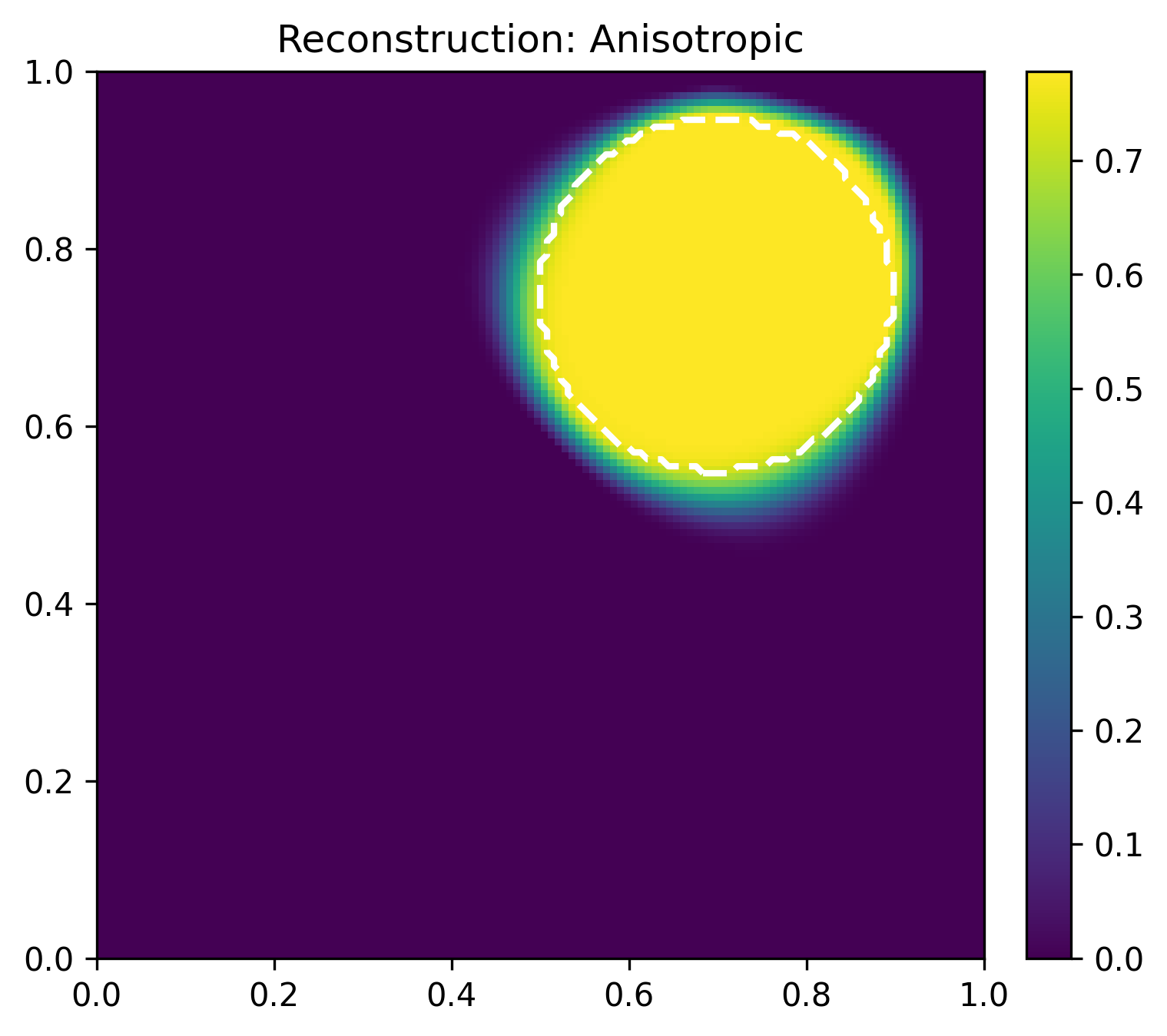}
        \caption{Recovery using $\TVtw$}
    \end{subfigure}
    \begin{subfigure}[t]{0.3\linewidth}        
        \centering
        \includegraphics[trim=0 0 0 19, clip, width=\linewidth]{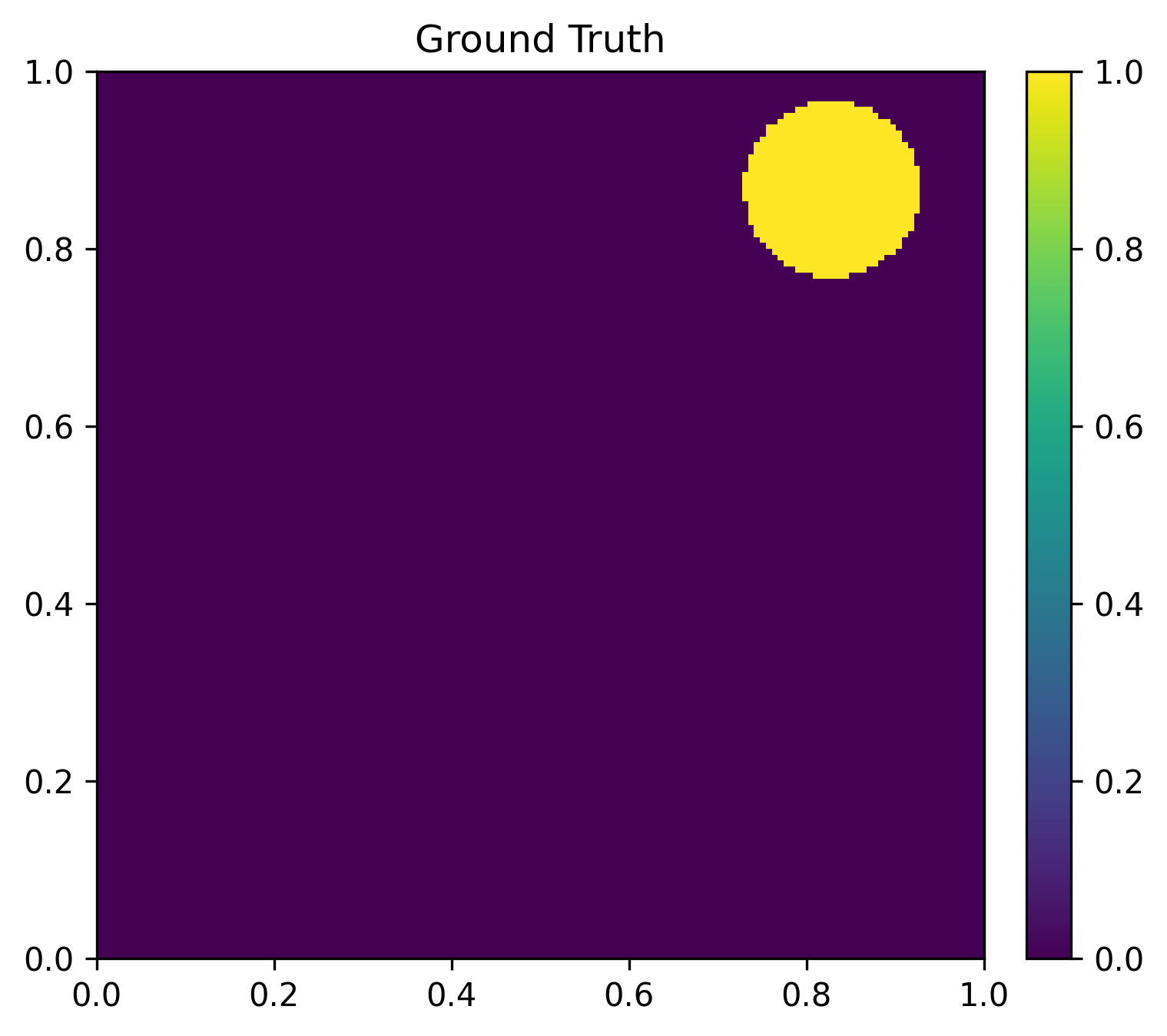}
        \caption{True source}
    \end{subfigure}
    \begin{subfigure}[t]{0.3\linewidth}        
        \centering
        \includegraphics[trim=0 0 0 19, clip, width=\linewidth]{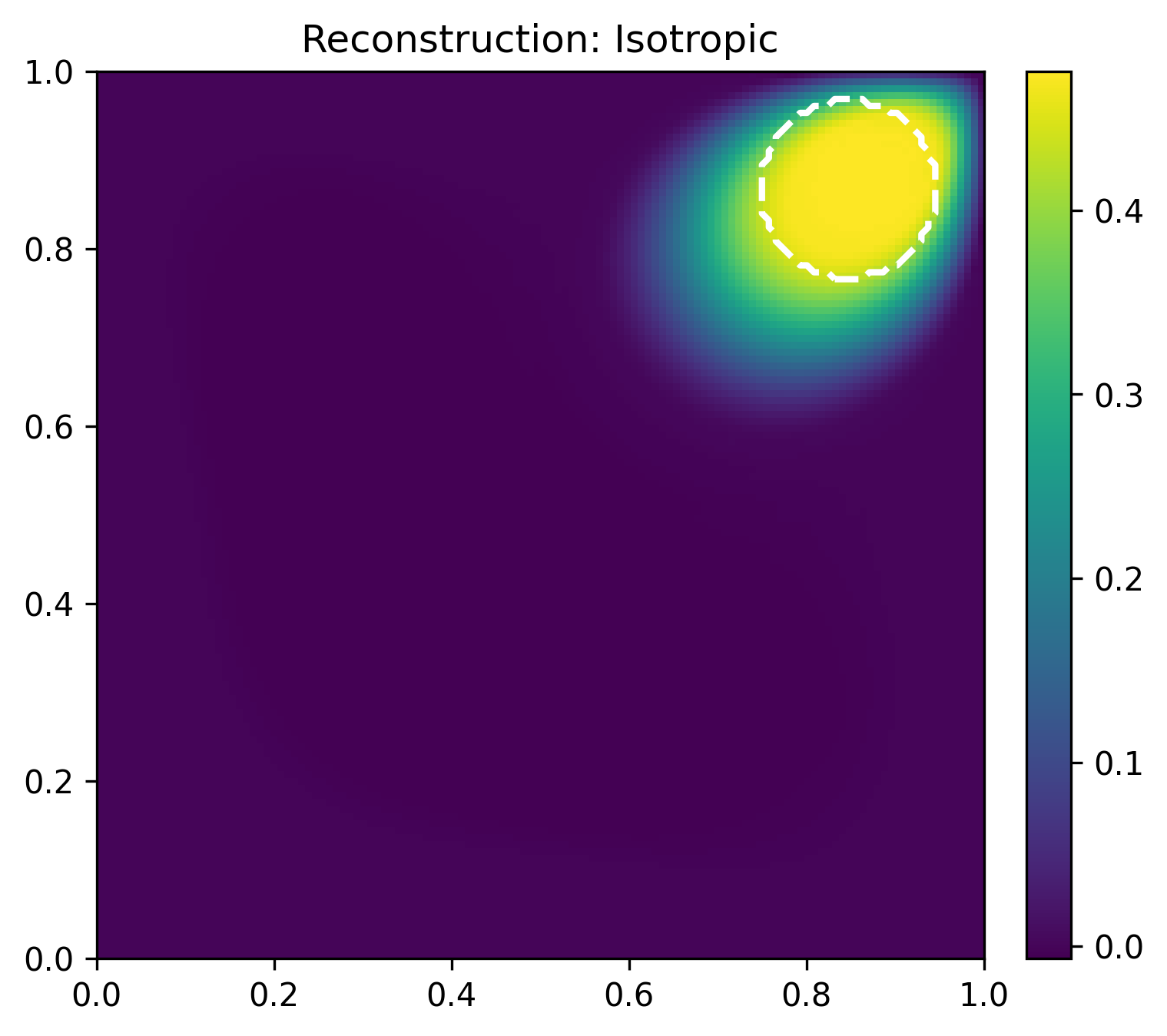}
        \caption{Recovery using $\TVw$}
    \end{subfigure}
    \begin{subfigure}[t]{0.3\linewidth}        
    \centering
    \includegraphics[trim=0 0 0 19, clip, width=\linewidth]{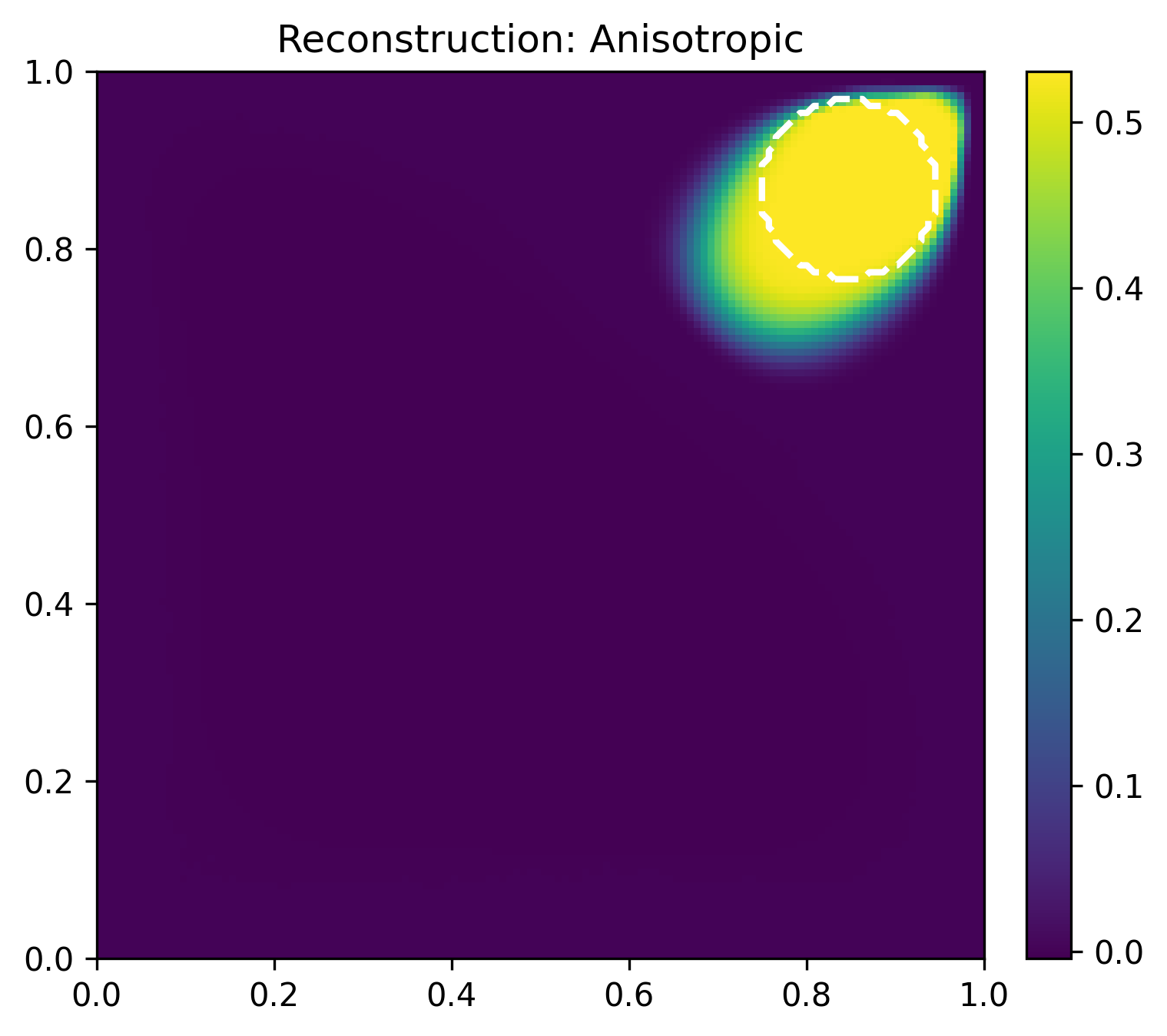}
    \caption{Recovery using $\TVtw$}
    \end{subfigure}
    \caption{Comparison of the true source and inverse recoveries applying the weighted TV ($\TVw$) and directionally weighted TV ($\TVtw$) methods, where the weights are generated with Dirichlet boundary conditions. In all simulations, we set the boundary penalty term $\beta$ equal to the TV-regularization parameter $\alpha$.}
    \label{fig:circles}
\end{figure}

\begin{figure}[H]
    \centering
    \begin{subfigure}[t]{0.8\linewidth}        
        \centering
        \includegraphics[trim=0 0 0 0, clip, width=\linewidth]{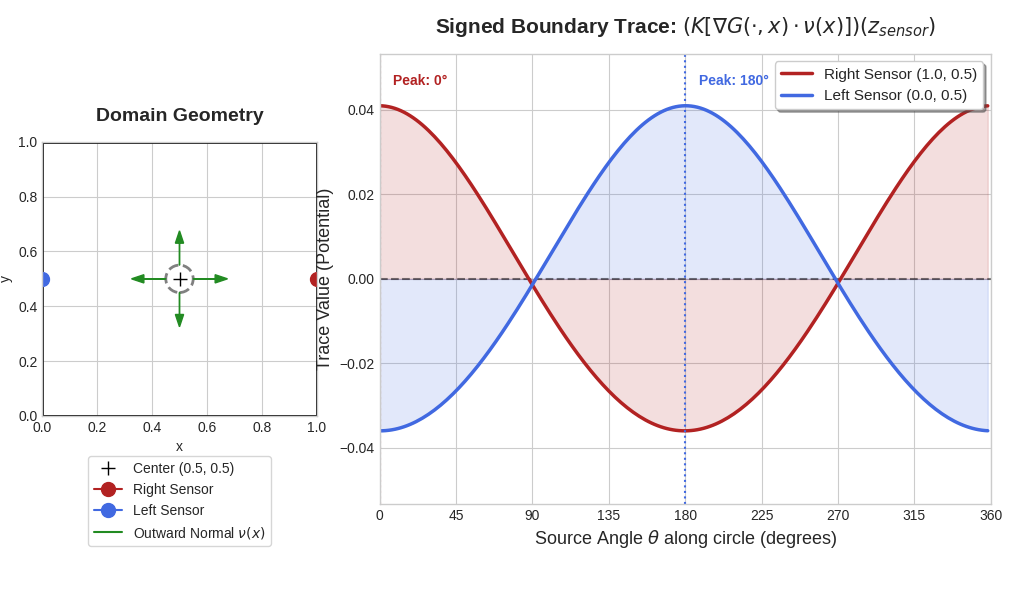}
        \caption{Small circle (dashed curve)}
    \end{subfigure}
    \begin{subfigure}[t]{0.8\linewidth}        
        \centering
        \includegraphics[trim=0 0 0 0, clip, width=\linewidth]{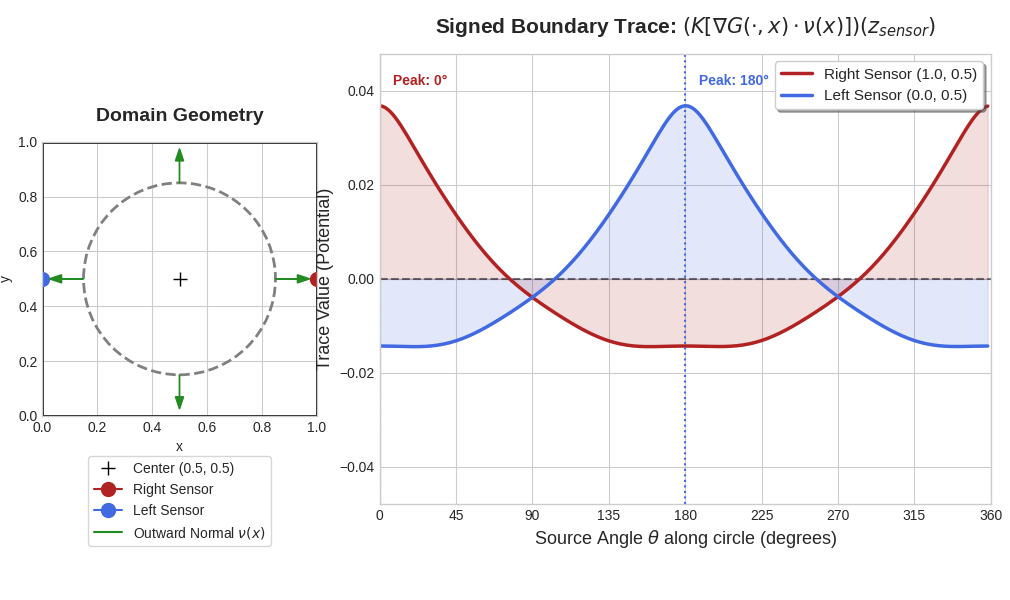}
        \caption{Big circle (dashed curve)}
    \end{subfigure}
    \caption{Point evaluations of $K[\nabla G(\cdot;x) \cdot \vnu^*(x)]$ at two fixed boundary points $z_R, z_L \in \partial\Omega$ (in blue and red, respectively), as functions of the polar angle of the source point $x$ on the source circle.}
    \label{fig:traces}
\end{figure}

In Figure \ref{fig:shapes}, we investigate how well different shapes can be recovered by solving \eqref{def:variational_form} or \eqref{def:variational_form_pre}. For the true convex shapes in panels (a) and (d), we observe that the \methodTwo method (seen in panels (c) and (f)) is successful in reconstructing the shape and position, whereas the isotropic functional (seen in panels (b) and (e)) struggles to preserve the sharp boundary. 

For the non-convex shapes in panels (g) and (j), we observe that both the isotropic and \methodTwo procedures can recover the position and partially the spatial extent of the sources, but they both fail to reconstruct the exact shapes - and the isotropic method still produces smoother reconstructions.

We also investigated how choosing of $p = 1$, instead of $p = 2$, in the definition \eqref{def:tw} of  $\tw$ affected recovery. In Figure \ref{fig:l1norm} we present the recovery of the same sources as in Figure \ref{fig:shapes}. The position and extension of the sources are still quite well preserved, but we might observe a bit more axis-alignment in the inverse solutions.  

\begin{figure}[H]
    \centering
    \begin{subfigure}[t]{0.3\linewidth}        
    \centering
    \includegraphics[trim=0 0 0 19, clip, width=\linewidth]{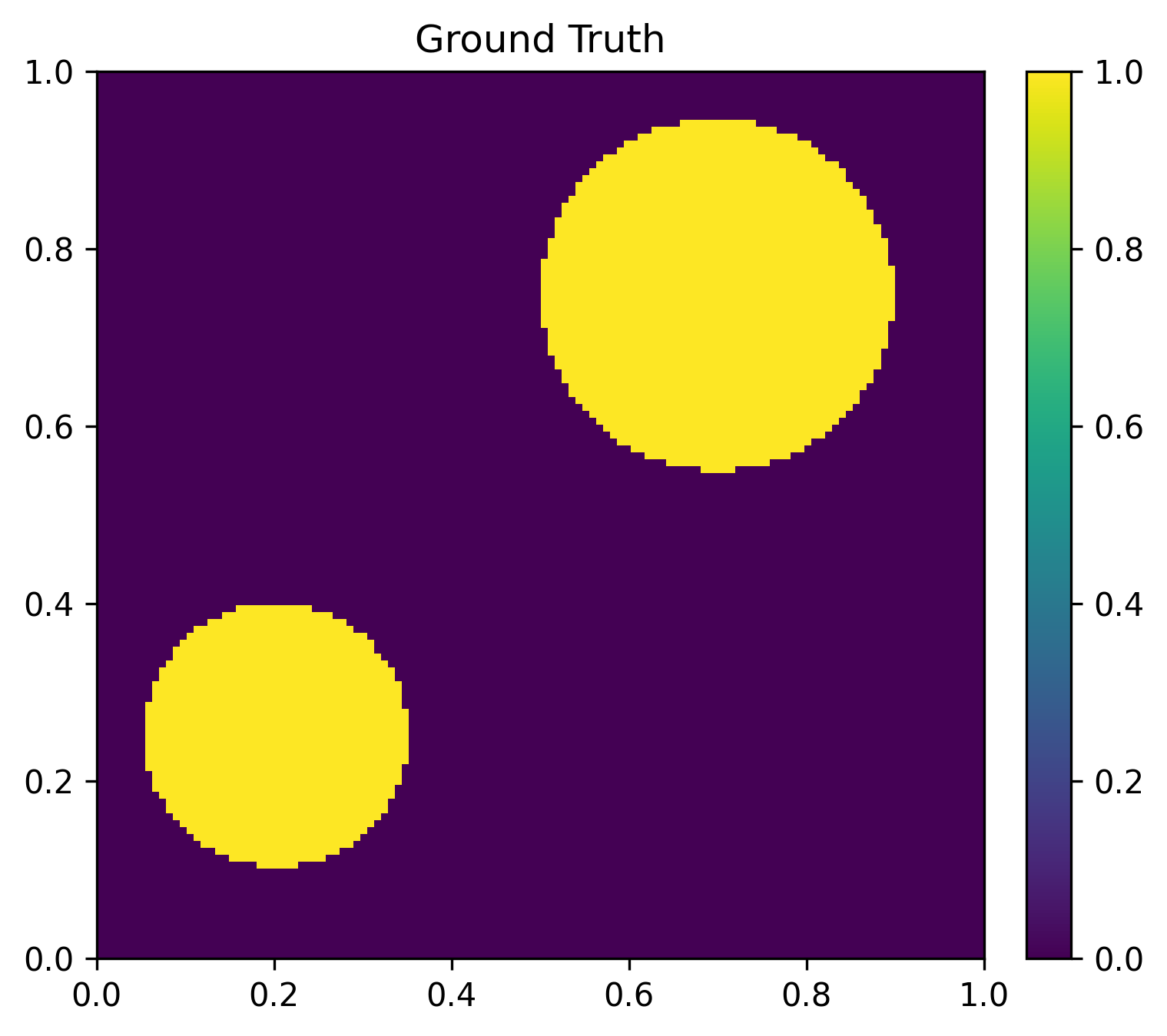}
    \caption{True source}
    \end{subfigure}
    \begin{subfigure}[t]{0.3\linewidth}        
        \centering
        \includegraphics[trim=0 0 0 19, clip, width=\linewidth]{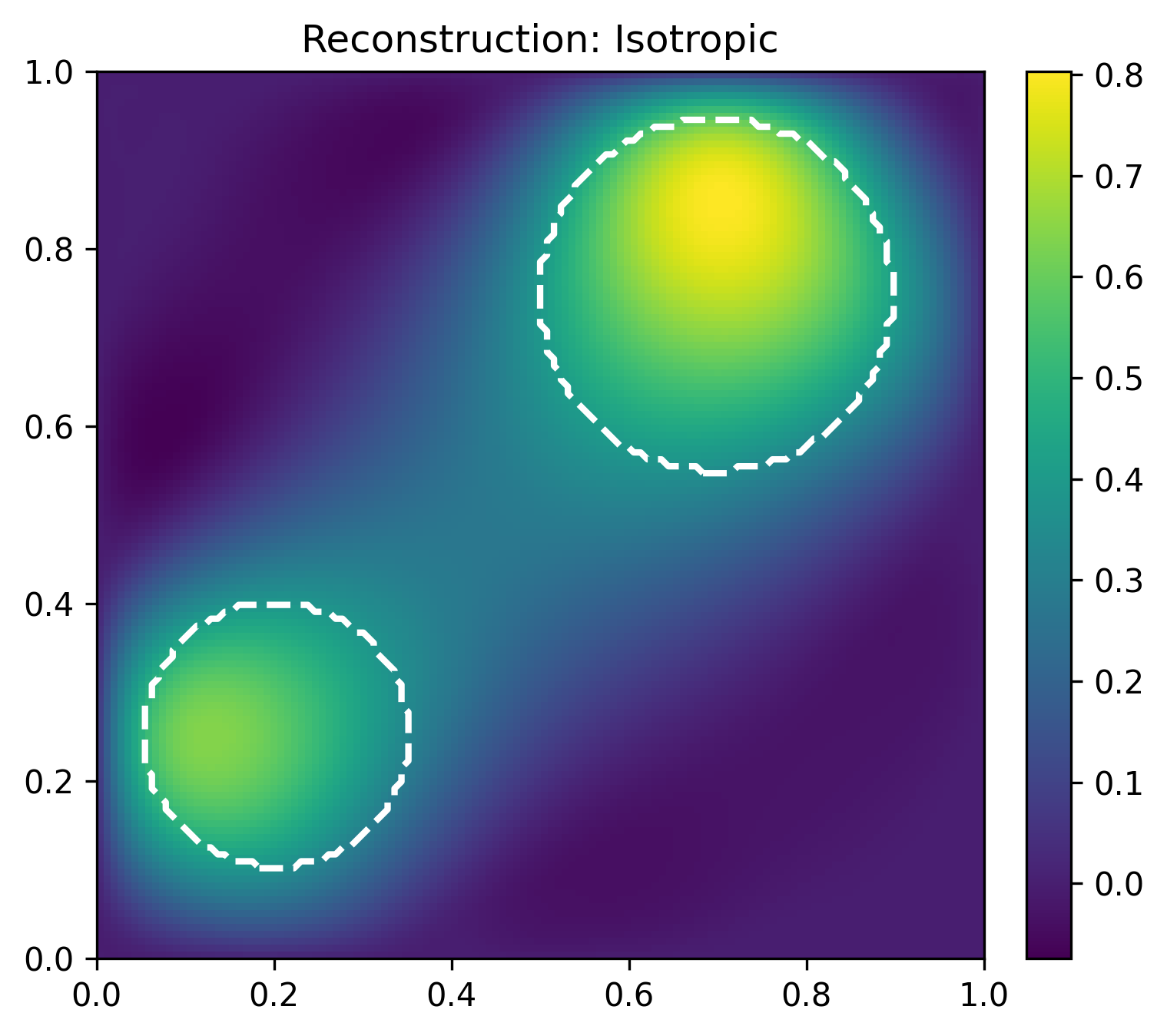}
        \caption{Recovery using $\TVw$}
    \end{subfigure}
    \begin{subfigure}[t]{0.3\linewidth}        
        \centering
        \includegraphics[trim=0 0 0 19, clip, width=\linewidth]{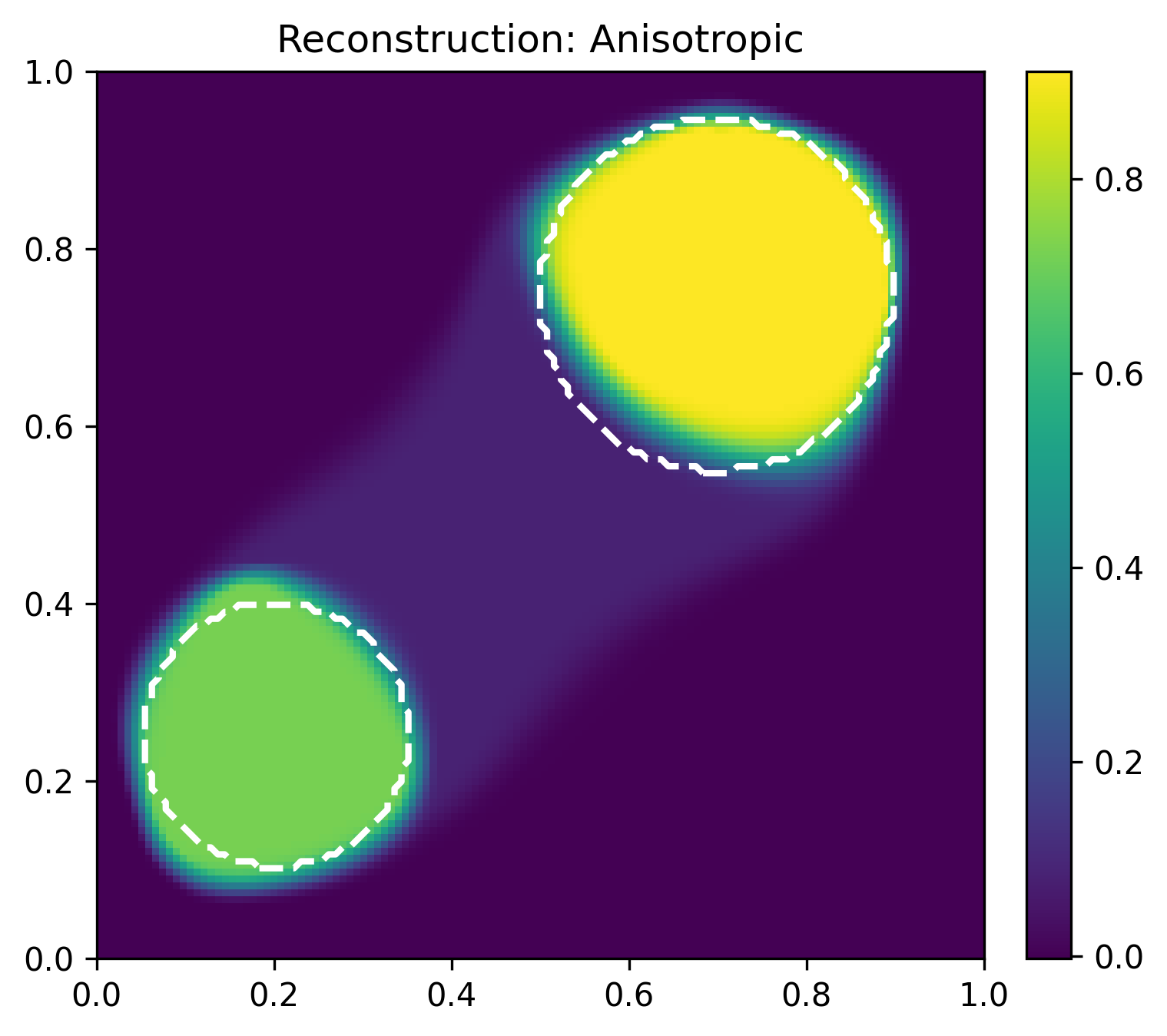}
        \caption{Recovery using $\TVtw$}
    \end{subfigure}
    \begin{subfigure}[t]{0.3\linewidth}        
        \centering
        \includegraphics[trim=0 0 0 19, clip, width=\linewidth]{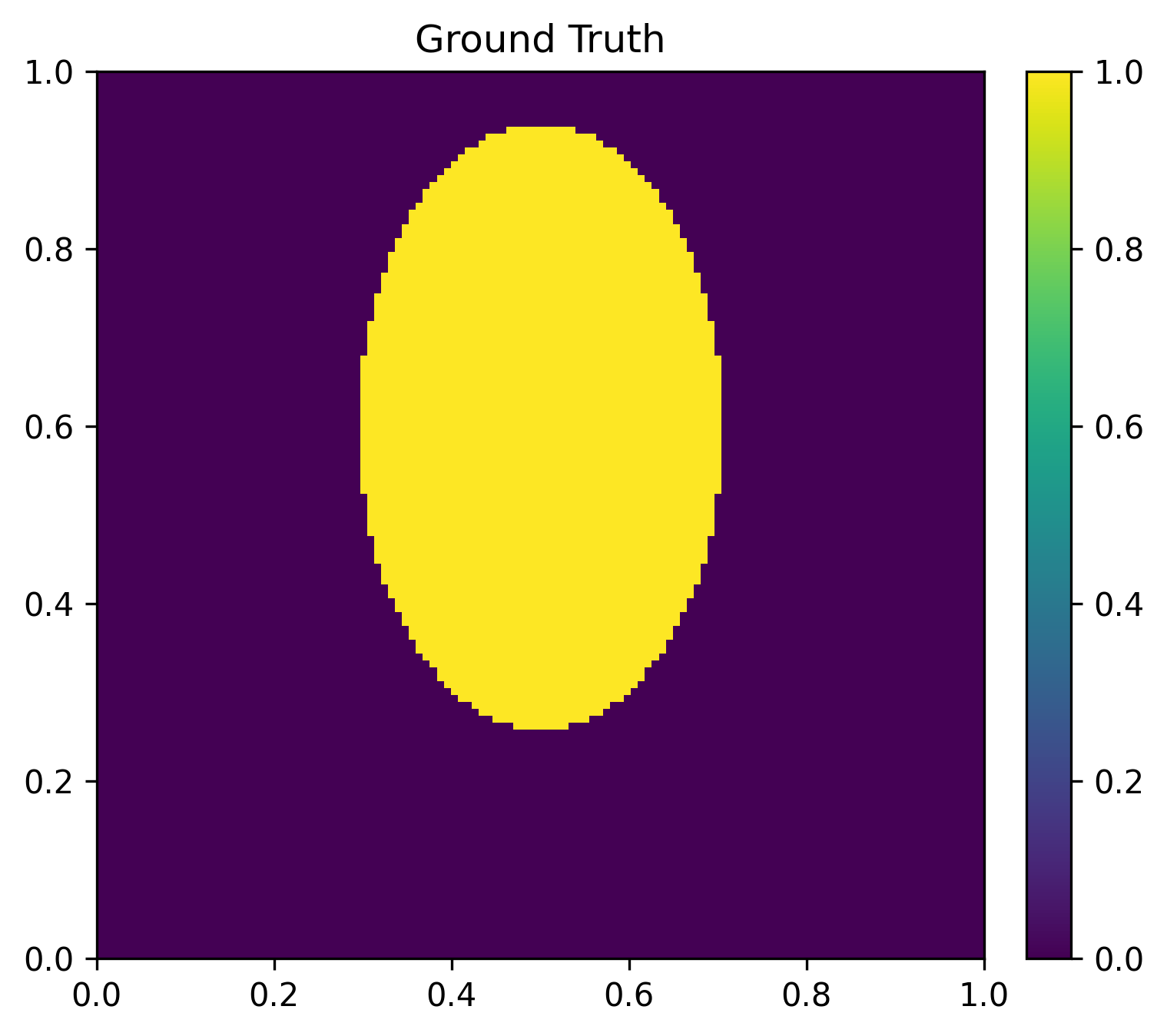}
        \caption{True source}
    \end{subfigure}
    \begin{subfigure}[t]{0.3\linewidth}        
        \centering
        \includegraphics[trim=0 0 0 19, clip, width=\linewidth]{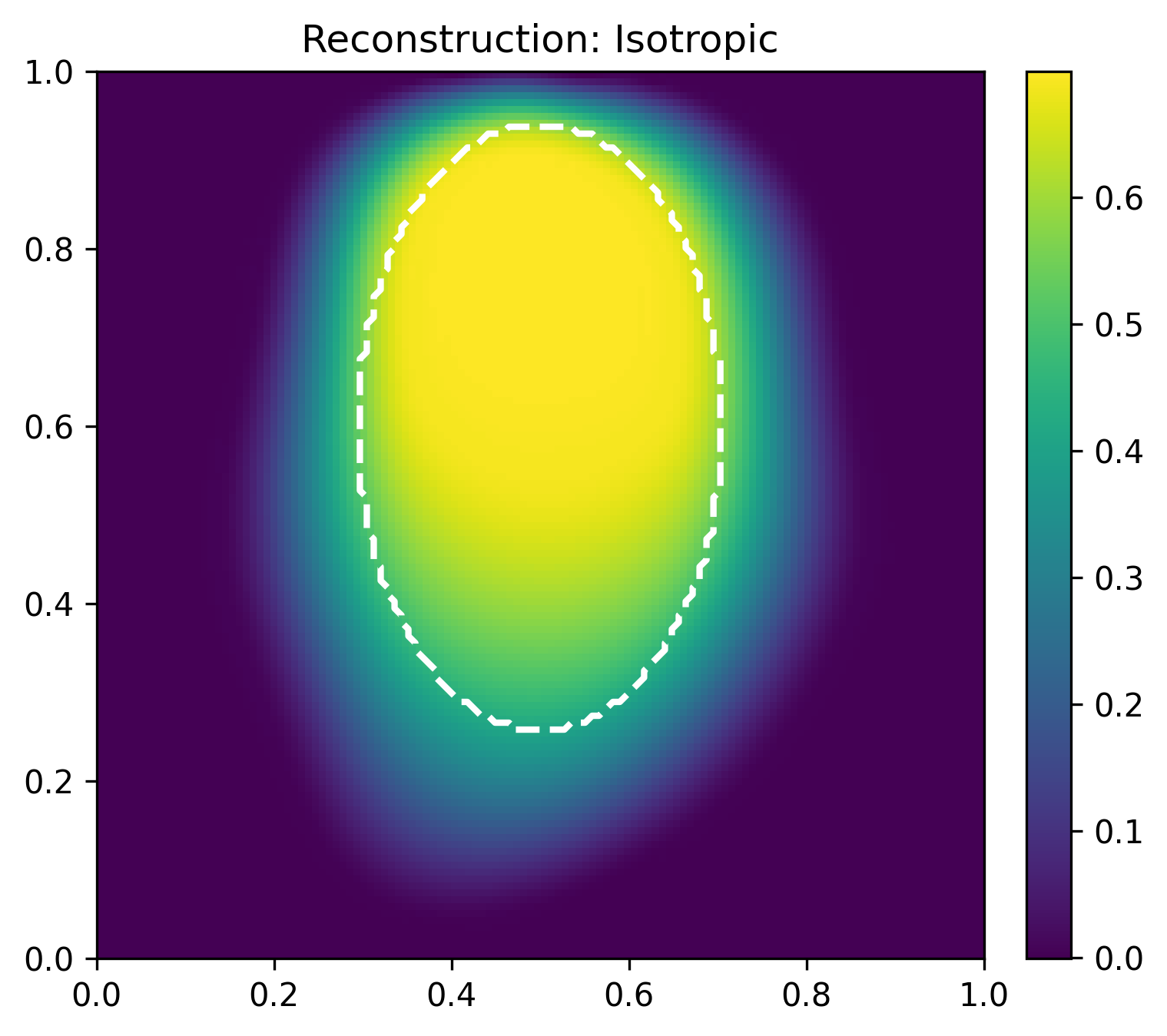}
        \caption{Recovery using $\TVw$}
    \end{subfigure}
    \begin{subfigure}[t]{0.3\linewidth}        
        \centering
        \includegraphics[trim=0 0 0 19, clip, width=\linewidth]{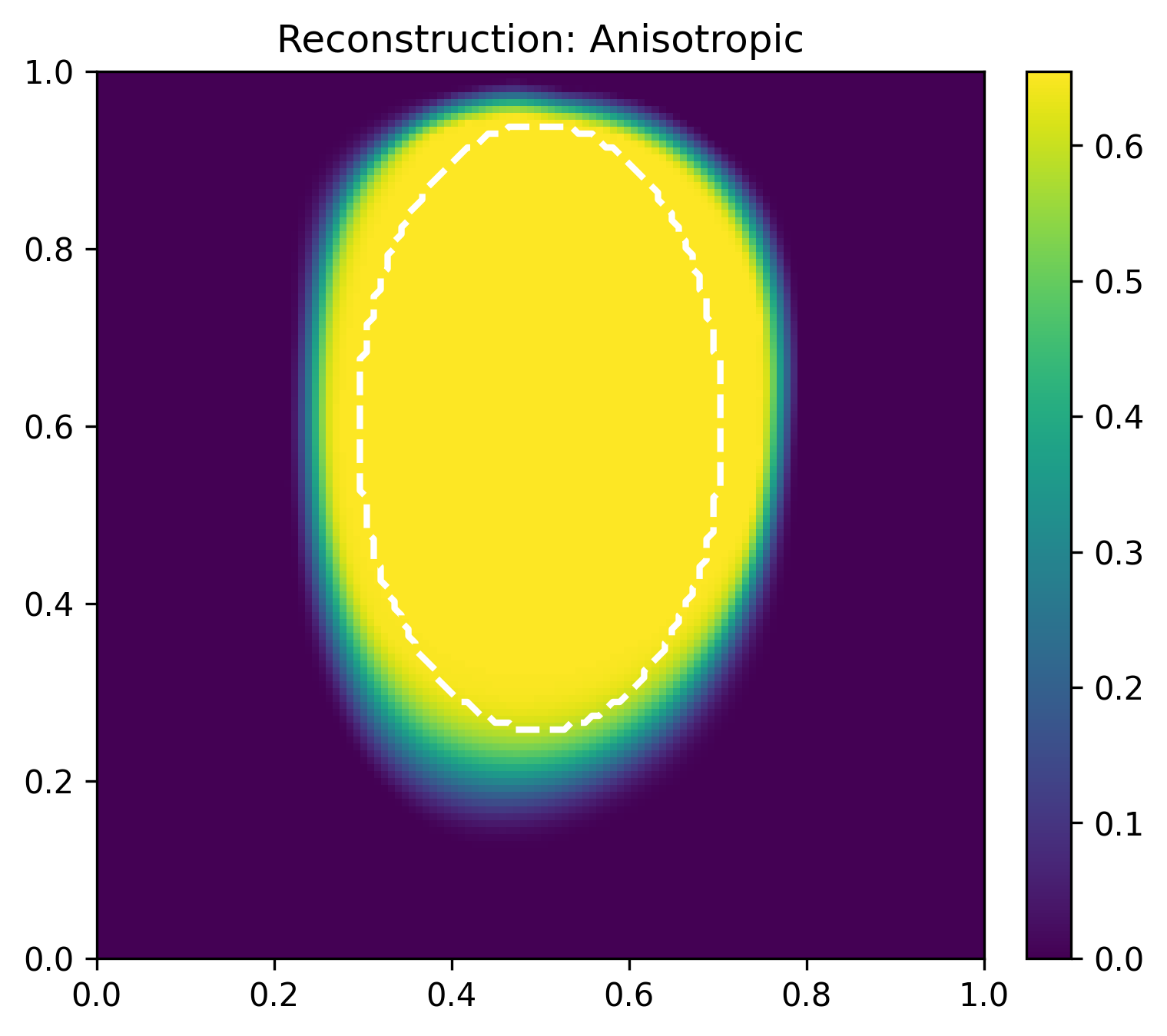}
        \caption{Recovery using $\TVtw$}
    \end{subfigure}
    \begin{subfigure}[t]{0.3\linewidth}        
        \centering
        \includegraphics[trim=0 0 0 19, clip, width=\linewidth]{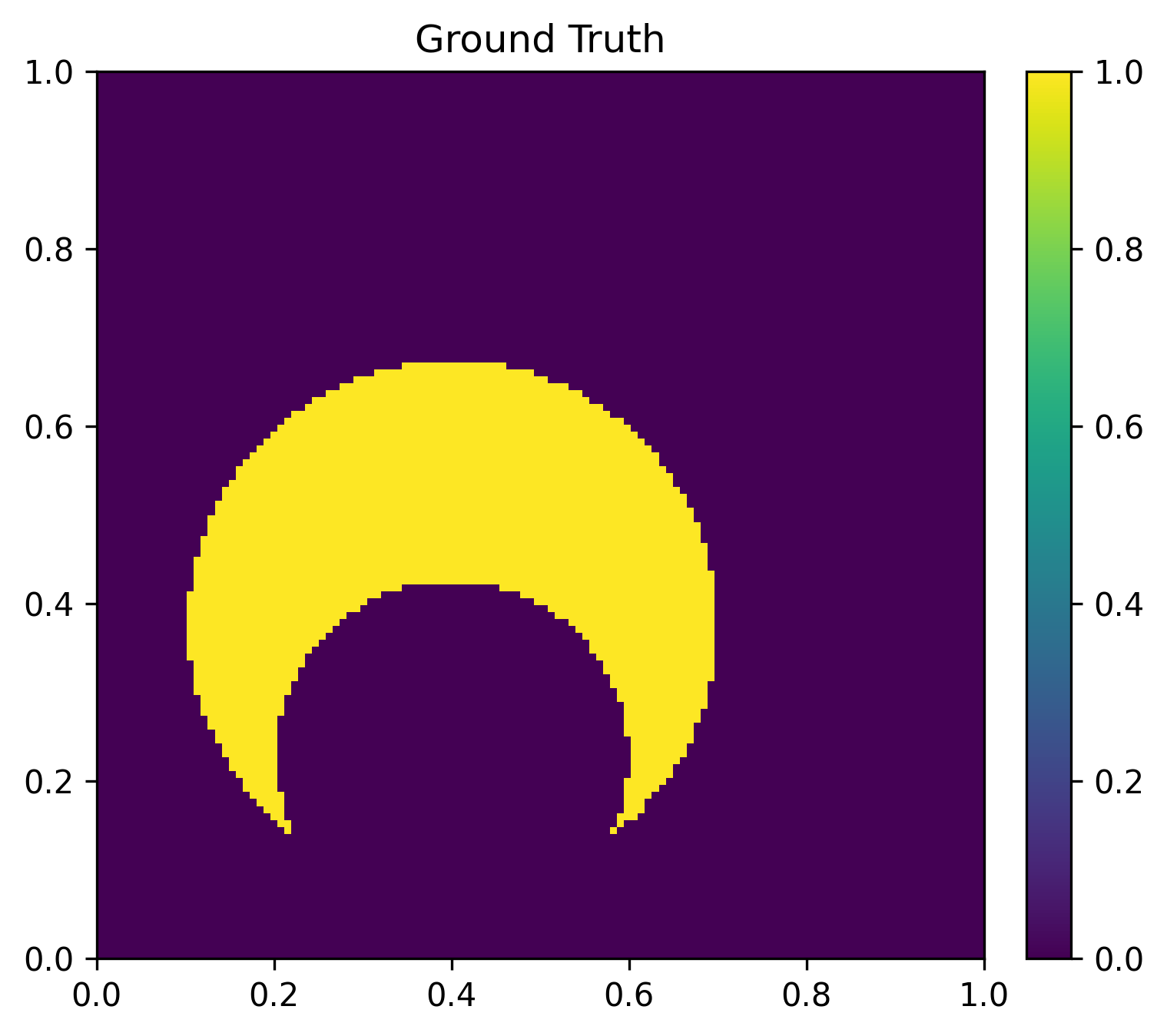}
        \caption{True source}
    \end{subfigure}
        \begin{subfigure}[t]{0.3\linewidth}        
        \centering
        \includegraphics[trim=0 0 0 19, clip, width=\linewidth]{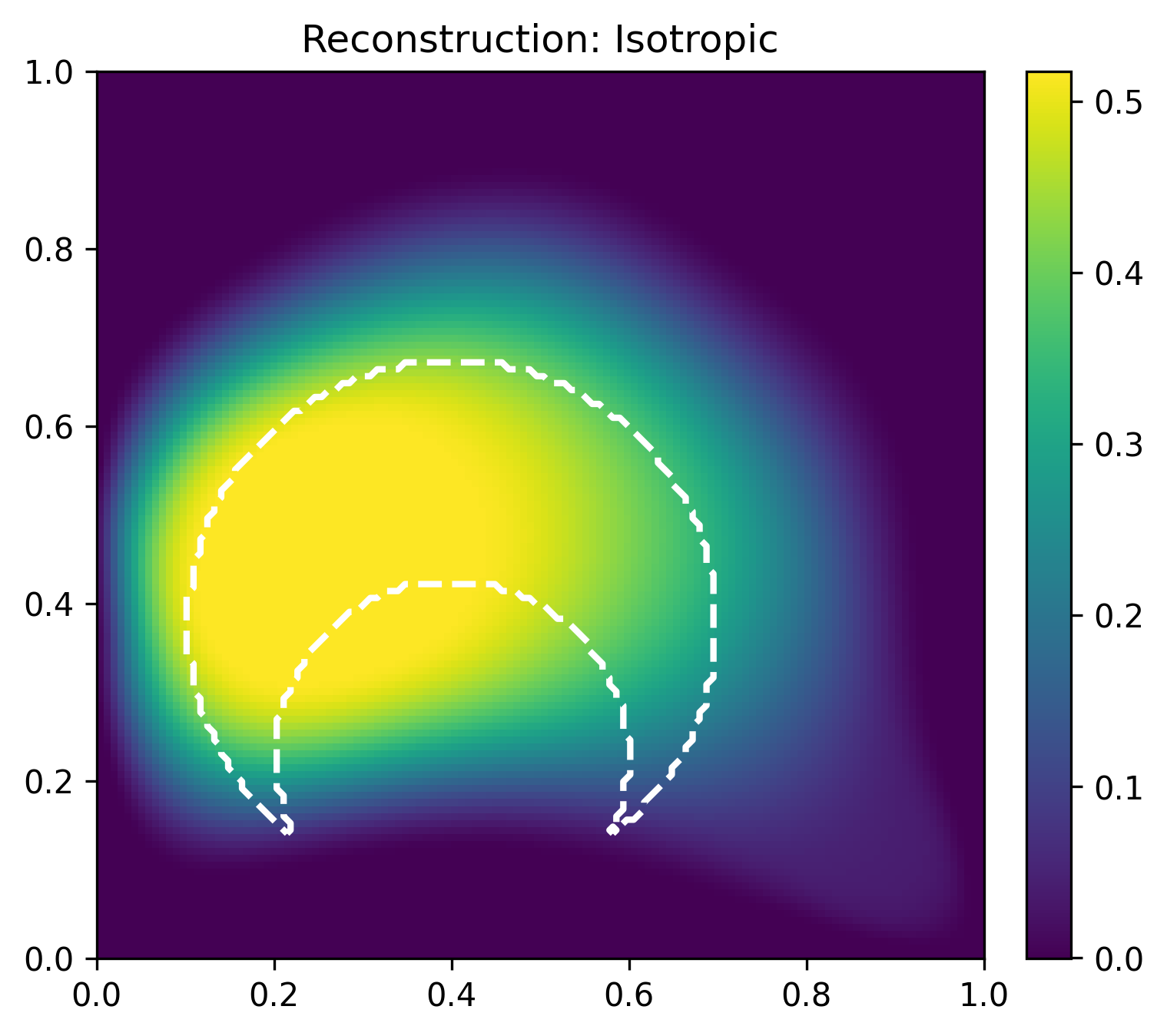}
        \caption{Recovery using $\TVw$}
    \end{subfigure}
    \begin{subfigure}[t]{0.3\linewidth}        
        \centering
        \includegraphics[trim=0 0 0 19, clip, width=\linewidth]{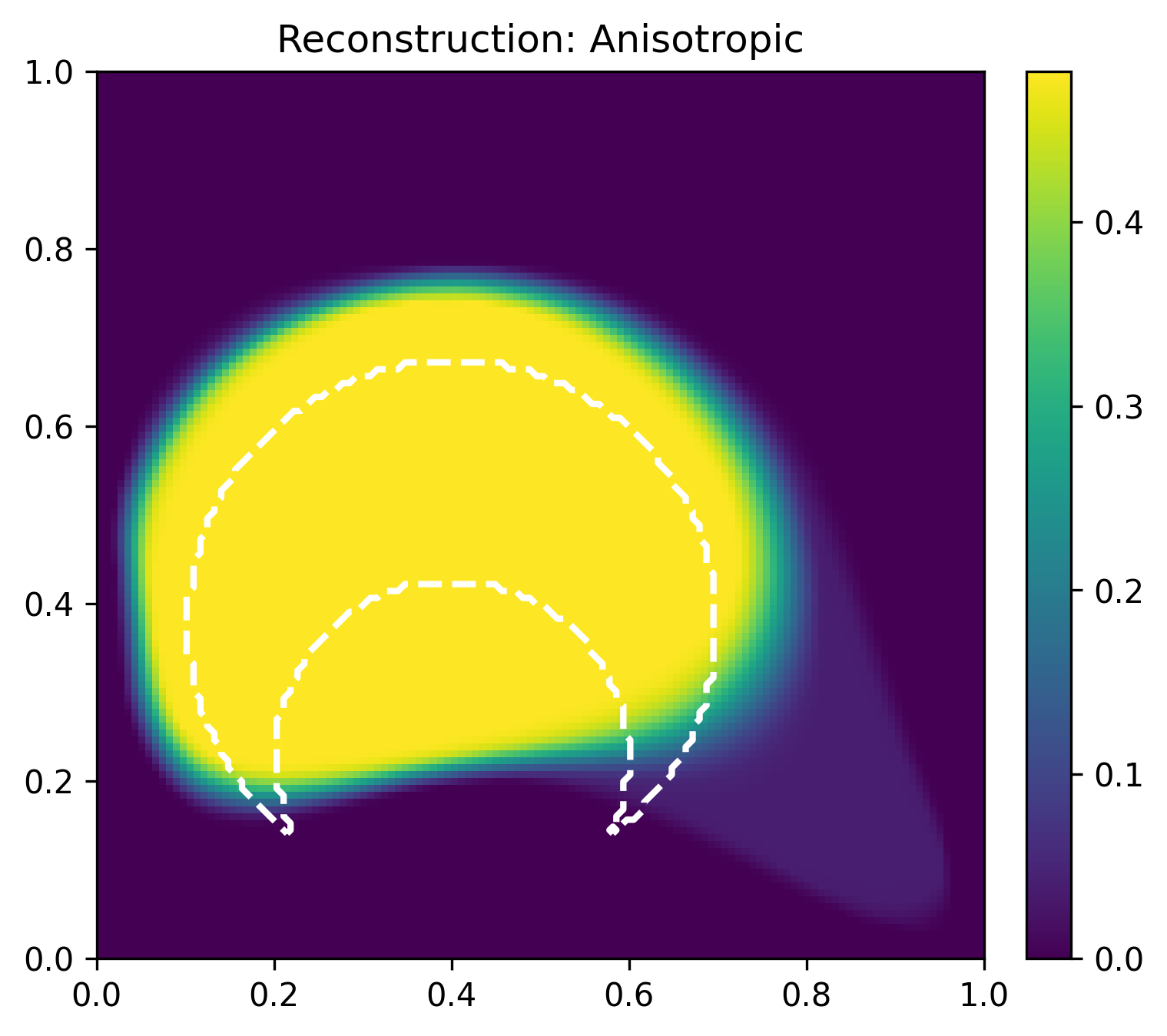}
        \caption{Recovery using $\TVtw$}
    \end{subfigure}
    \begin{subfigure}[t]{0.3\linewidth}        
        \centering
        \includegraphics[trim=0 0 0 19, clip, width=\linewidth]{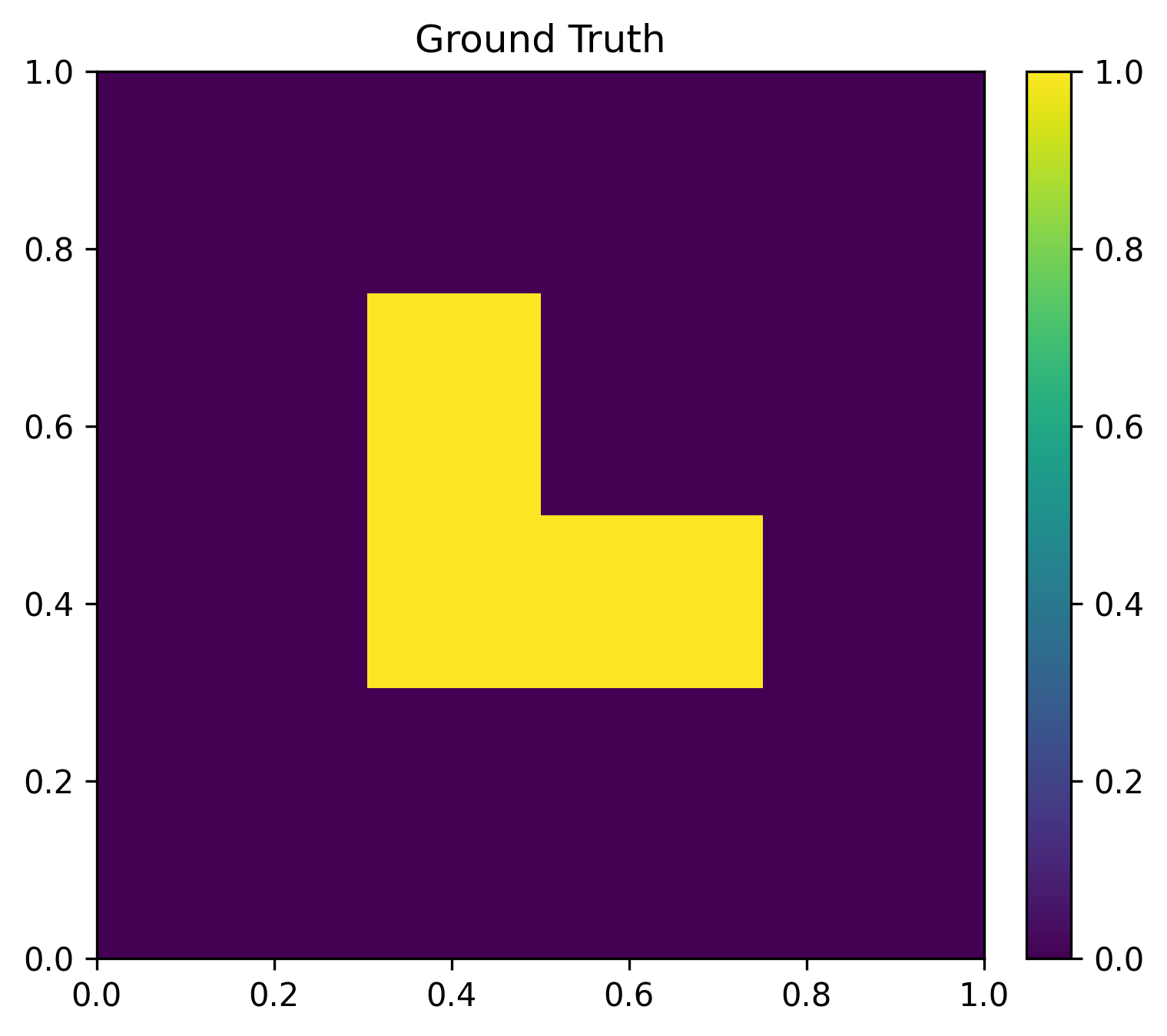}
        \caption{True source}
    \end{subfigure}
    \begin{subfigure}[t]{0.3\linewidth}        
        \centering
        \includegraphics[trim=0 0 0 19, clip, width=\linewidth]{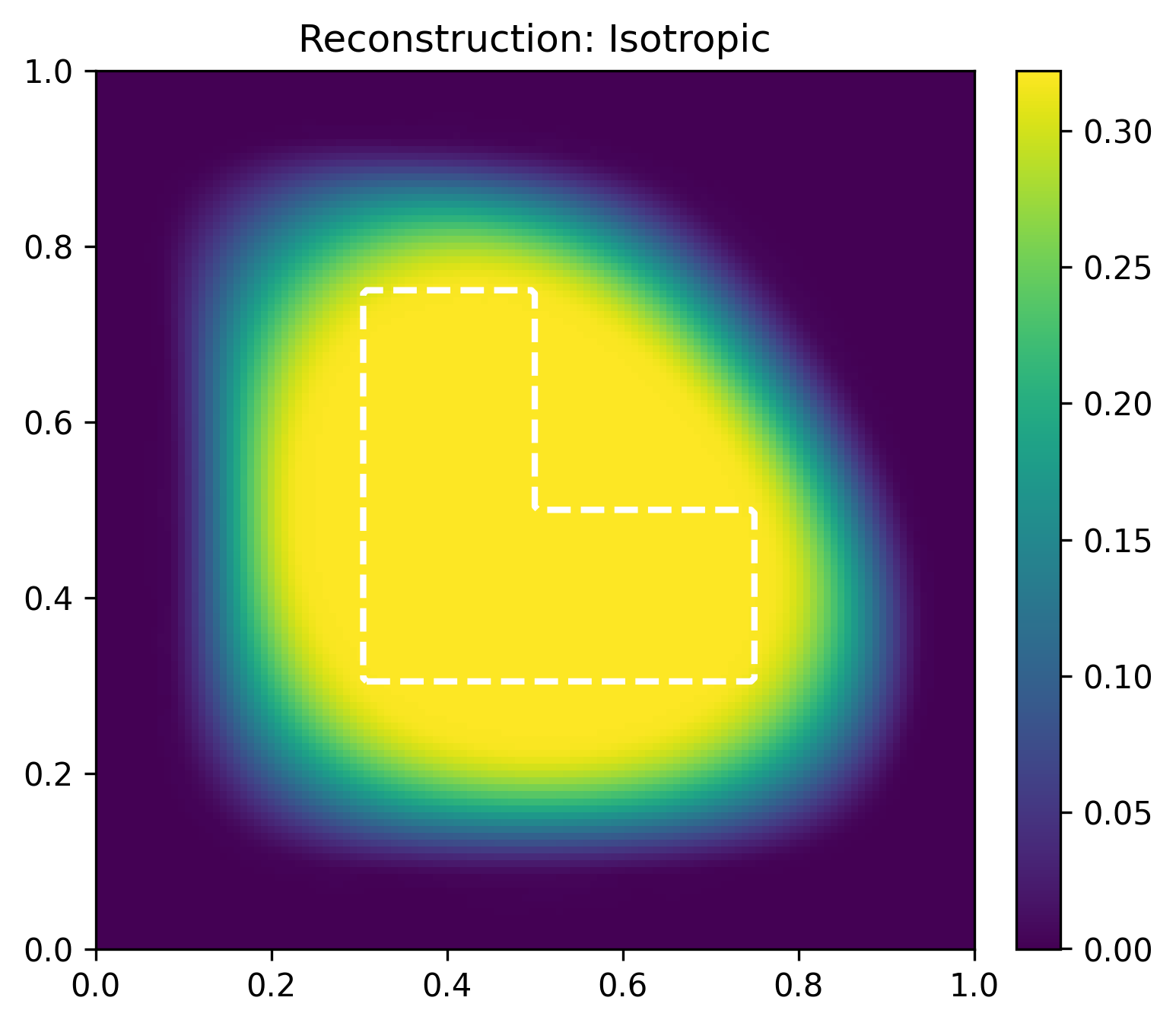}
        \caption{Recovery using $\TVw$}
    \end{subfigure}
    \begin{subfigure}[t]{0.3\linewidth}        
    \centering
    \includegraphics[trim=0 0 0 19, clip, width=\linewidth]{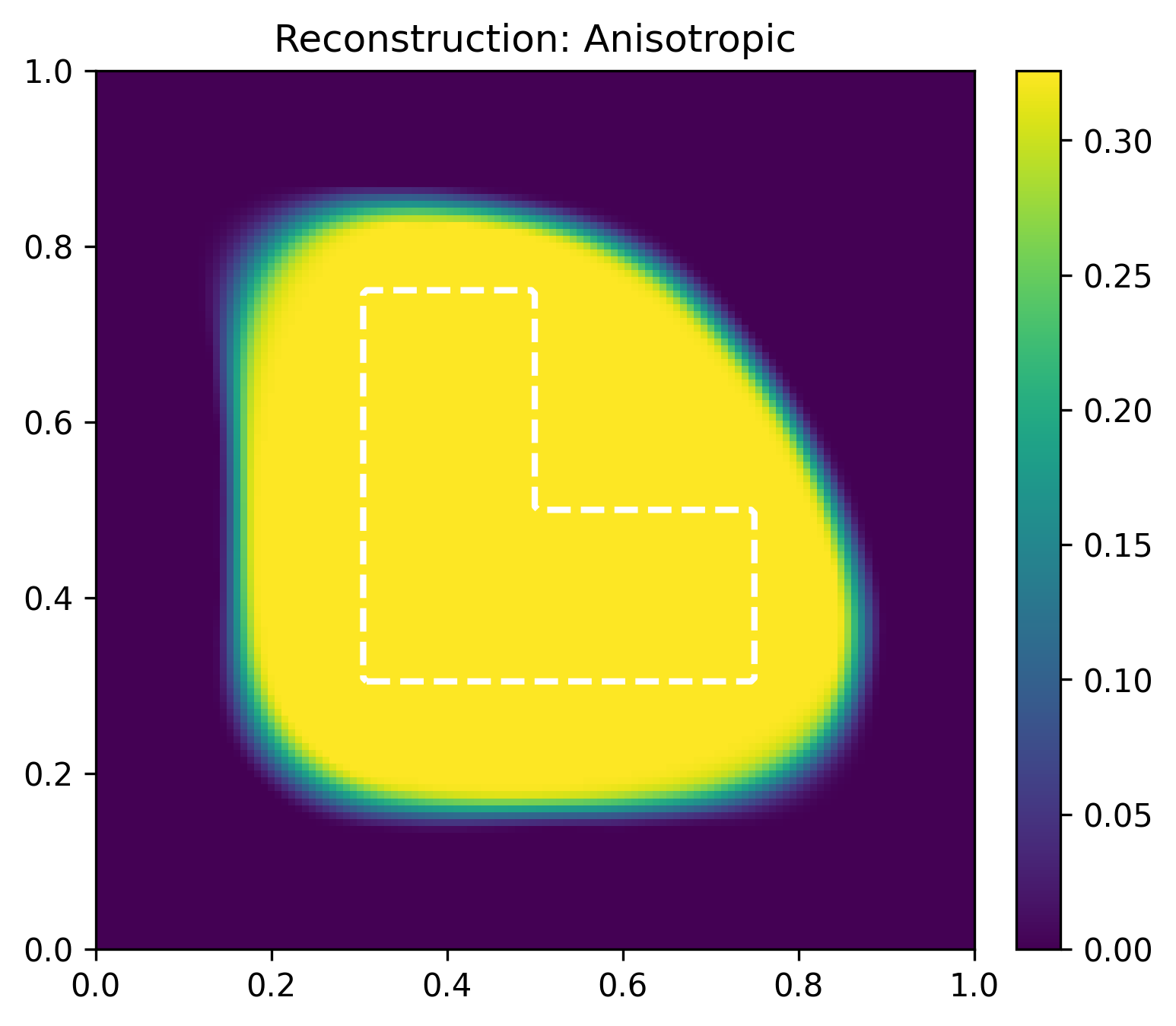}
    \caption{Recovery using $\TVtw$}
    \end{subfigure}
    \caption{Comparison of the true source and inverse recoveries applying the weighted TV ($\TVw$) and directionally weighted TV ($\TVtw$) methods, where the weights are generated with Dirichlet boundary conditions. In all simulations, we set the boundary penalty term $\beta$ equal to the TV-regularization parameter $\alpha$.}
    \label{fig:shapes}
\end{figure}

\begin{figure}[H]
    \centering
    \begin{subfigure}[t]{0.35\linewidth}        
        \centering
        \includegraphics[trim=0 0 0 19, clip, width=\linewidth]{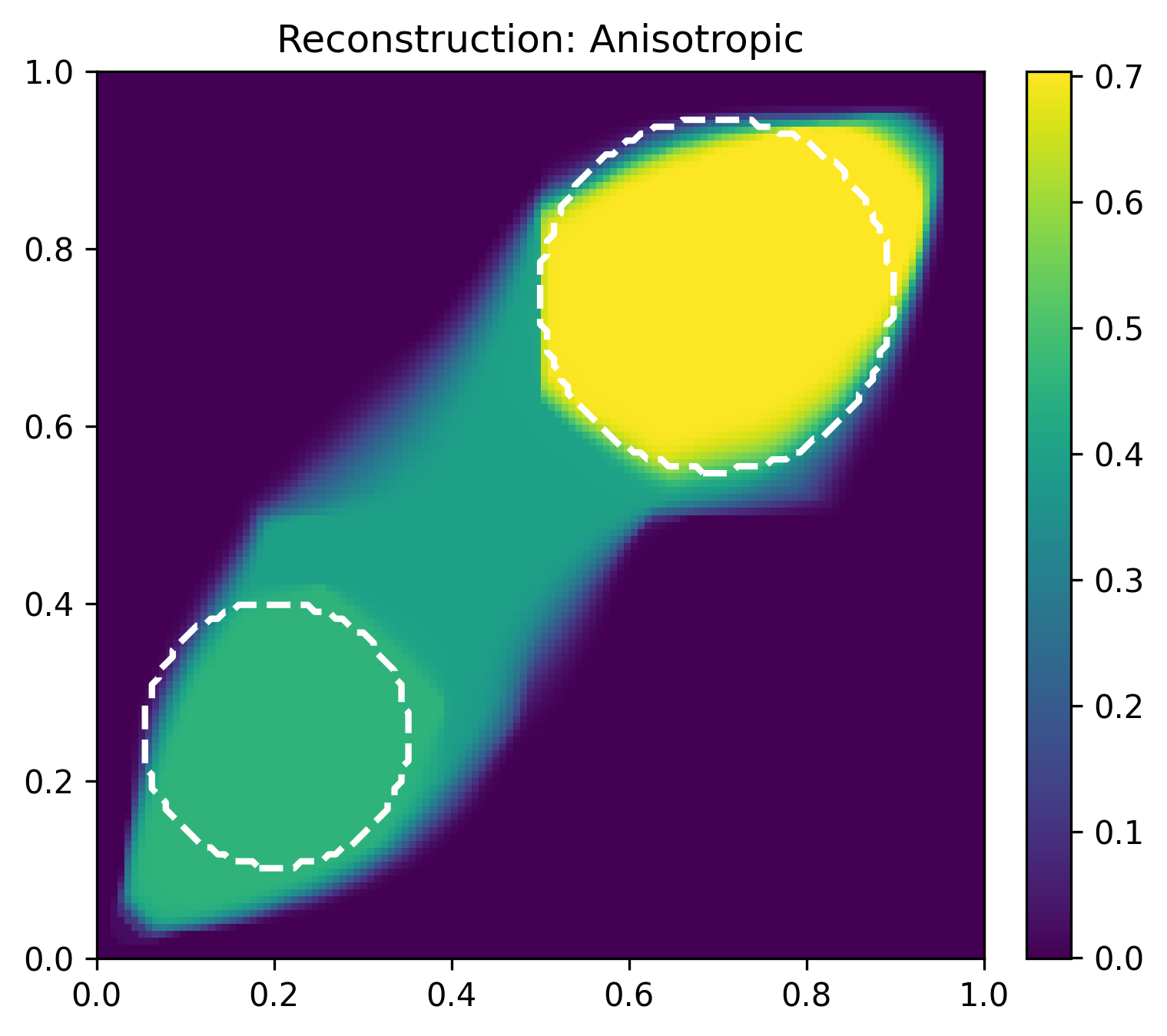}
        \caption{}
    \end{subfigure}
    \begin{subfigure}[t]{0.35\linewidth}        
        \centering
        \includegraphics[trim=0 0 0 19, clip, width=\linewidth]{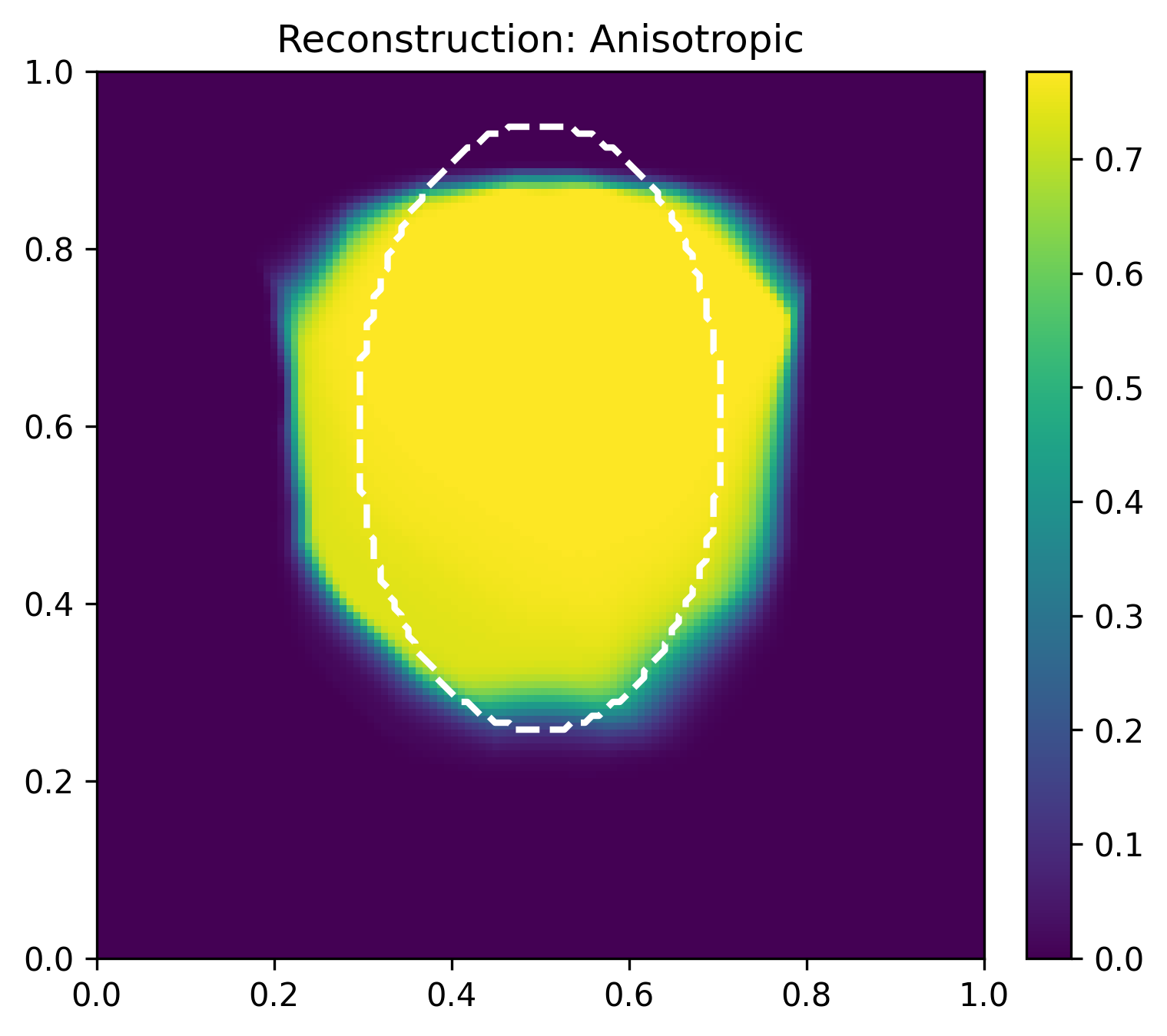}
        \caption{}
    \end{subfigure}\par
    \begin{subfigure}[t]{0.35\linewidth}        
        \centering
        \includegraphics[trim=0 0 0 19, clip, width=\linewidth]{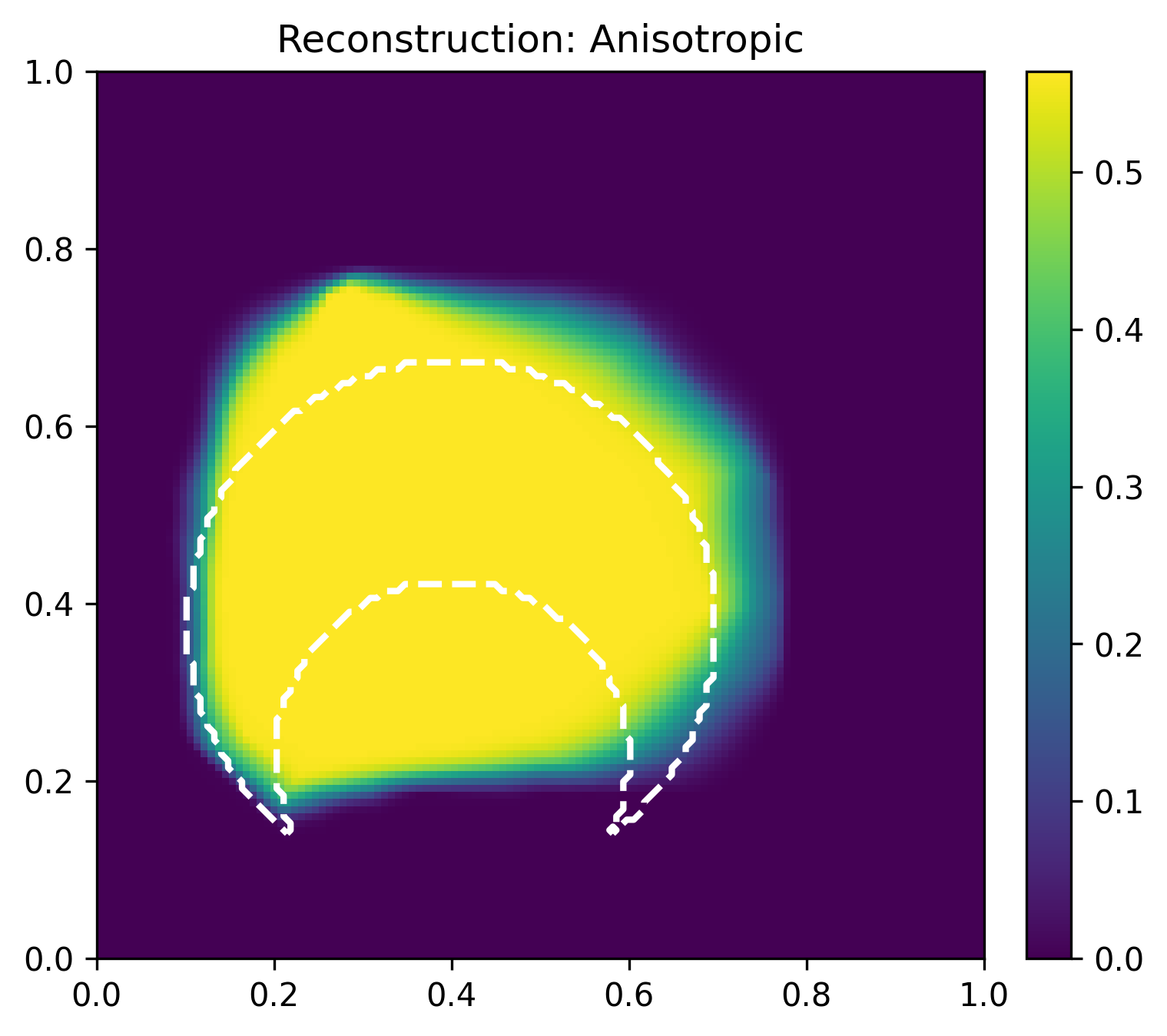}
        \caption{}
    \end{subfigure}
    \begin{subfigure}[t]{0.35\linewidth}        
        \centering
        \includegraphics[trim=0 0 0 19, clip, width=\linewidth]{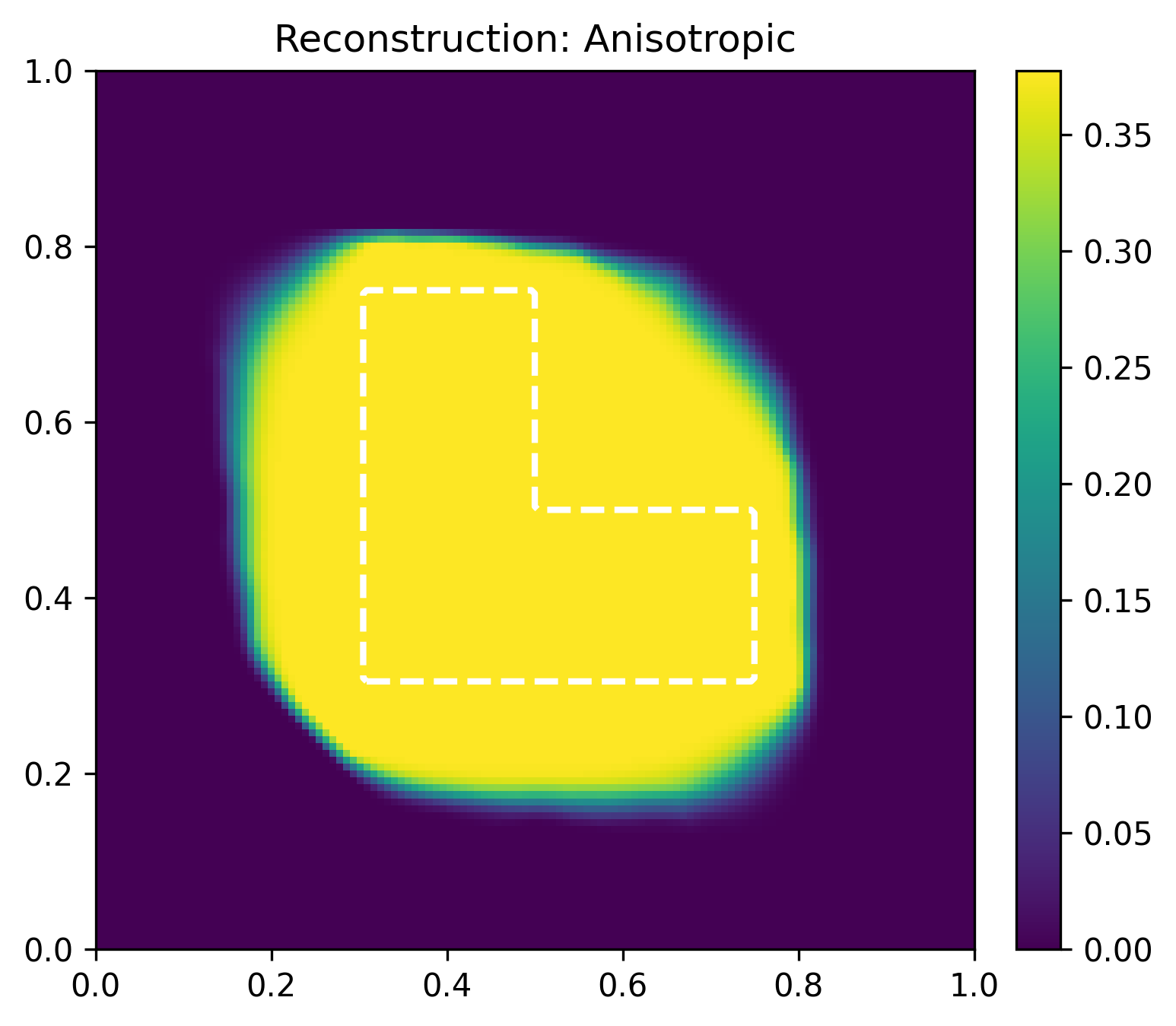}
        \caption{}
    \end{subfigure}
    \caption{Inverse reconstructions using $p = 1$ in the definition of the weights \eqref{def:tw}. The corresponding true sources are presented in Figure \ref{fig:shapes} (and as dashed curves in plots). 
    }
    \label{fig:l1norm}
\end{figure}

\subsection*{Gravimetry inspired problem}
The final experiments still concerns the PDE in \eqref{eq:screenedPoisson}, but we are now searching for a discontinuous source extending throughout the entire domain $\Omega$, placing it into the context of a gravimetry type of problem. 

In Figure~\ref{fig:cracks}, panel~(a) shows the true source. When the solution is penalized at the boundary, i.e., when $\beta > 0$, the reconstruction degrades, as seen in panels~(b) and~(d): since the true source genuinely reaches the boundary, the boundary penalty suppresses a feature that should be present, and this happens whether or not spatial weighting is used. With $\beta = 0$, by contrast, the reconstructions improve, as shown in panels~(c), (e), and~(f). Although, without spatial weighting (panel~(c)), the interface is estimated too close to the boundary. Spatial weighting removes this bias: both the Dirichlet-based weights (panel~(e)) and the Neumann-based weights (panel~(f)) yield good reconstructions. 

However, when we restrict the observation domain to the subset $\Gamma = \{y \in \partial\Omega: y = 1\} \subset \partial\Omega$, we can observe a significant distinction between the reconstructions depending on whether we use Dirichlet or Neumann conditions for generating the Green's functions, with the Neumann approach clearly being superior, cf. Figure \ref{fig:cracks_partial}. 

\begin{figure}[H]
    \centering
    \begin{subfigure}[t]{0.3\linewidth}
        \centering
        \includegraphics[trim=0 0 0 19, clip, width=\linewidth]{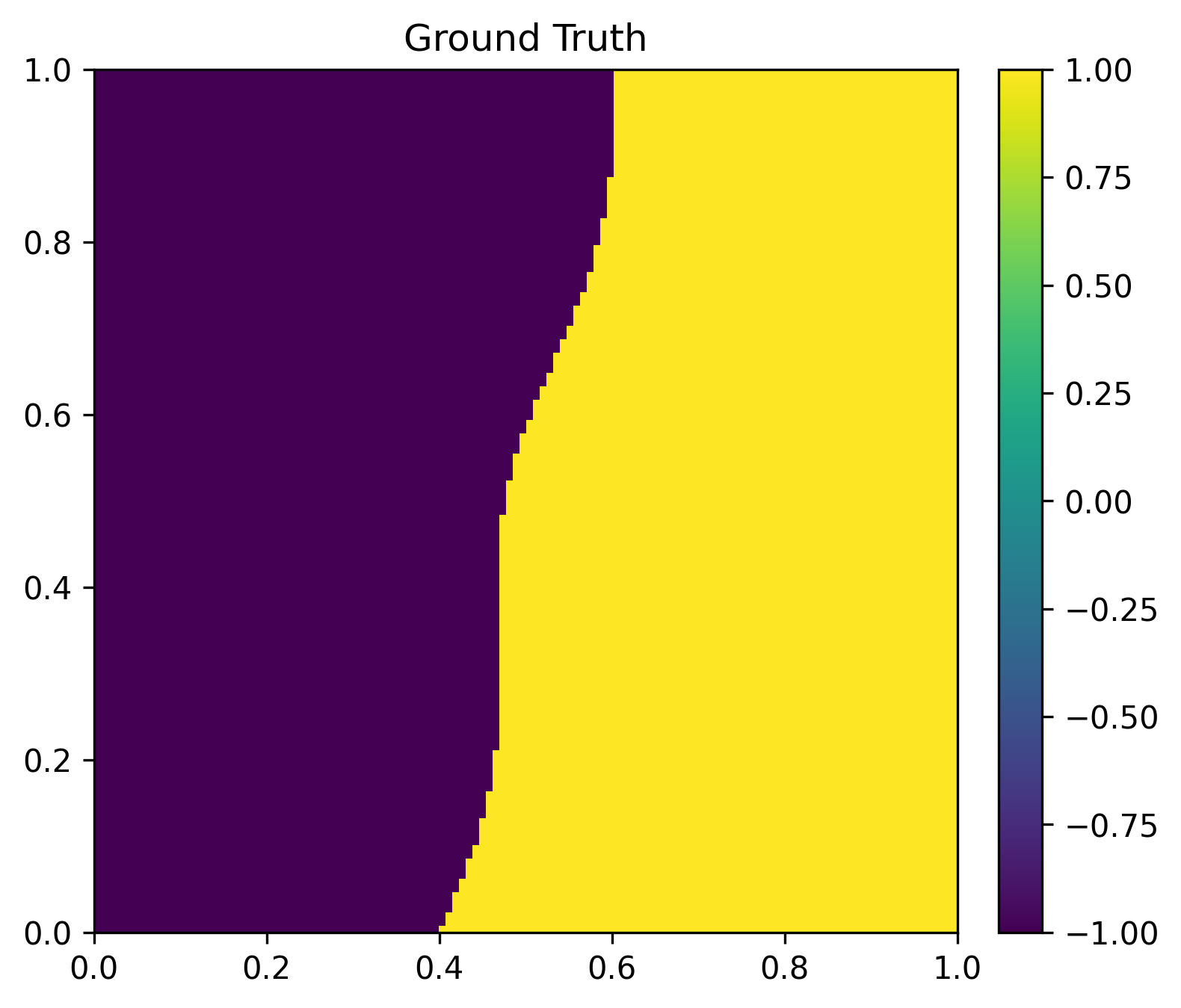}
        \caption{True interface}
    \end{subfigure}
    \begin{subfigure}[t]{0.3\linewidth}
        \centering
        \includegraphics[trim=0 0 0 19, clip, width=\linewidth]{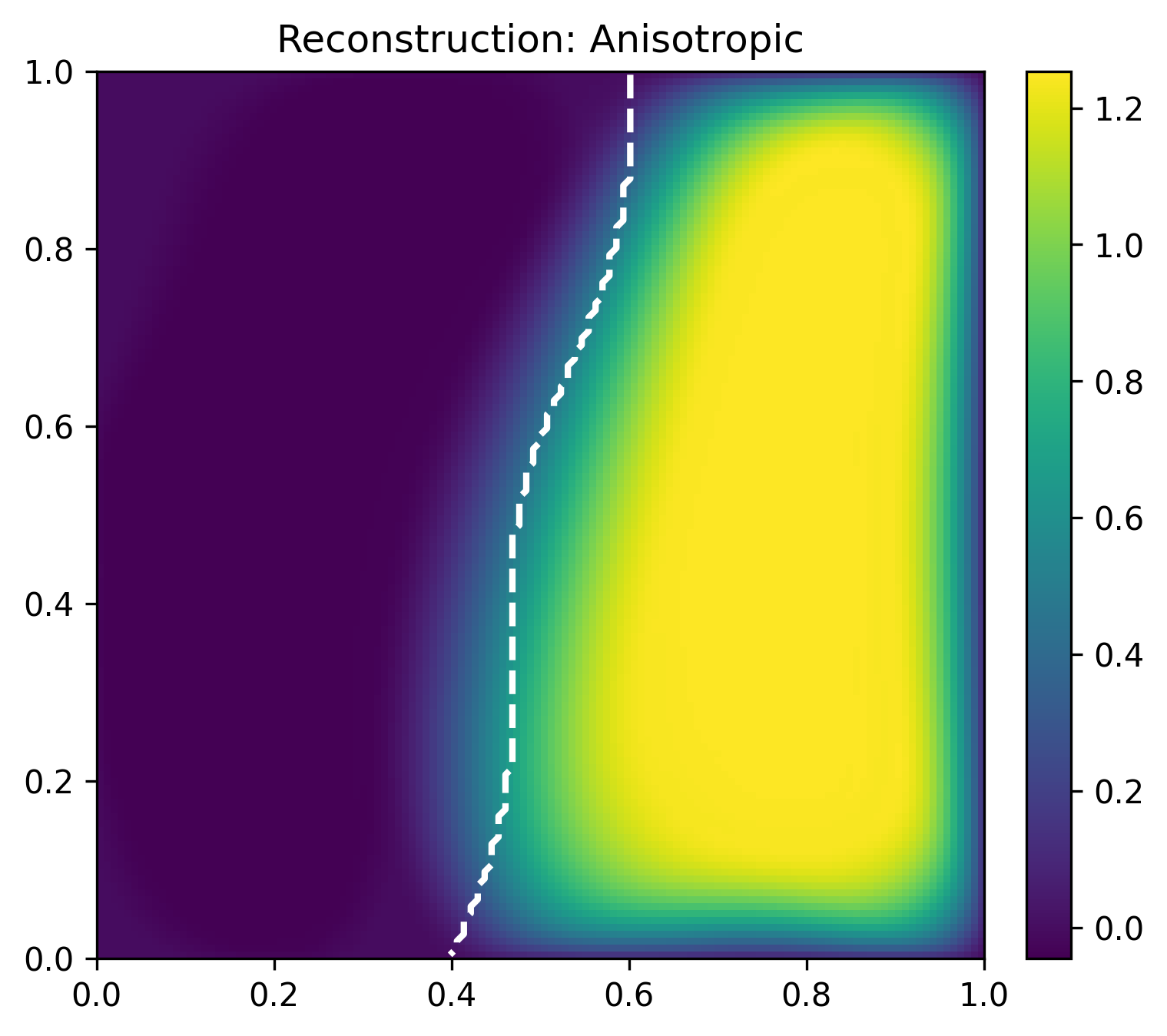}
        \caption{Standard TV, \\ \centering $\beta>0$}
    \end{subfigure}
    \begin{subfigure}[t]{0.3\linewidth}
        \centering
        \includegraphics[trim=0 0 0 19, clip, width=\linewidth]{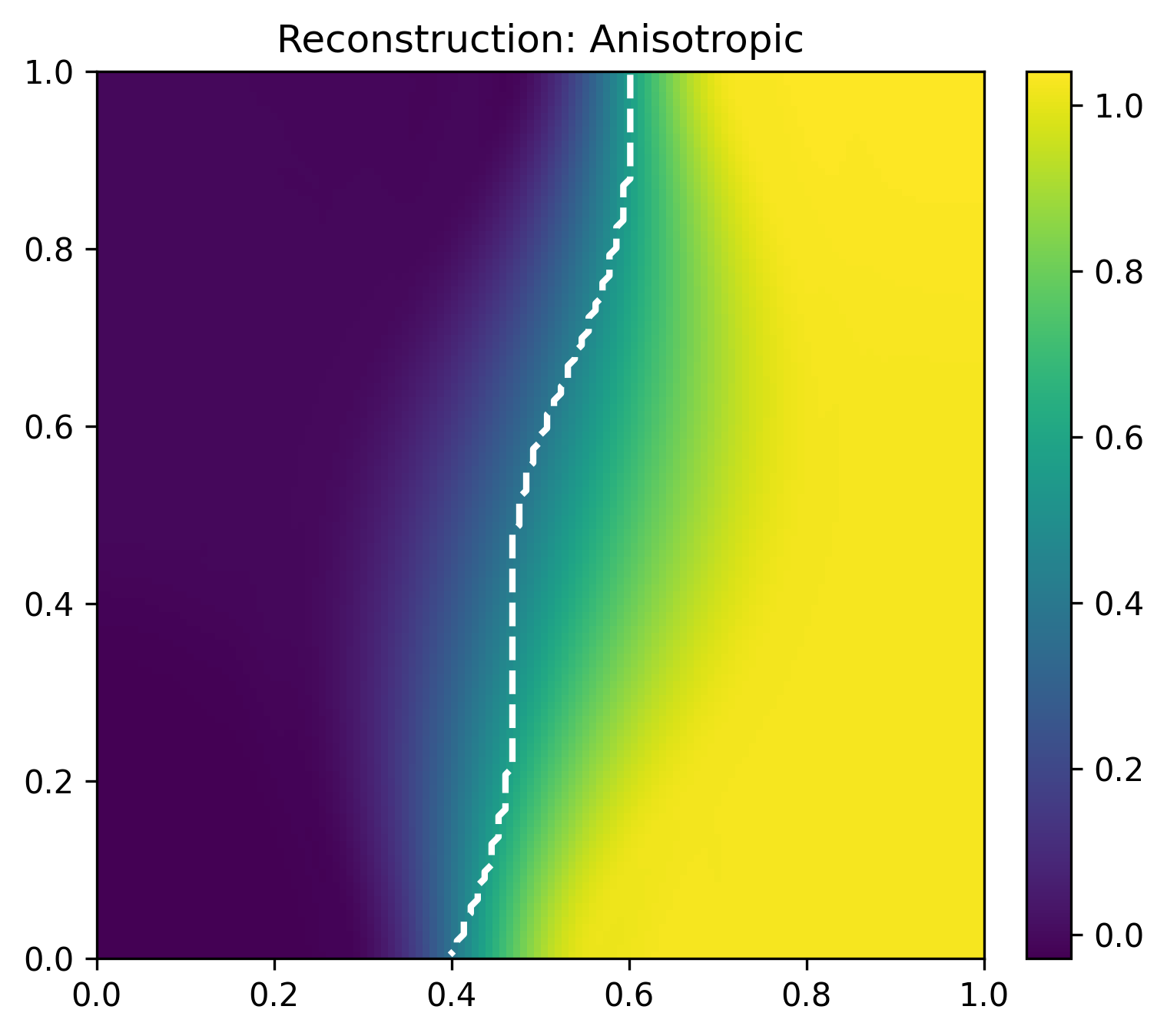}
        \caption{Standard TV, \\ \centering $\beta=0$}
    \end{subfigure}
    \begin{subfigure}[t]{0.3\linewidth}
        \centering
        \includegraphics[trim=0 0 0 19, clip, width=\linewidth]{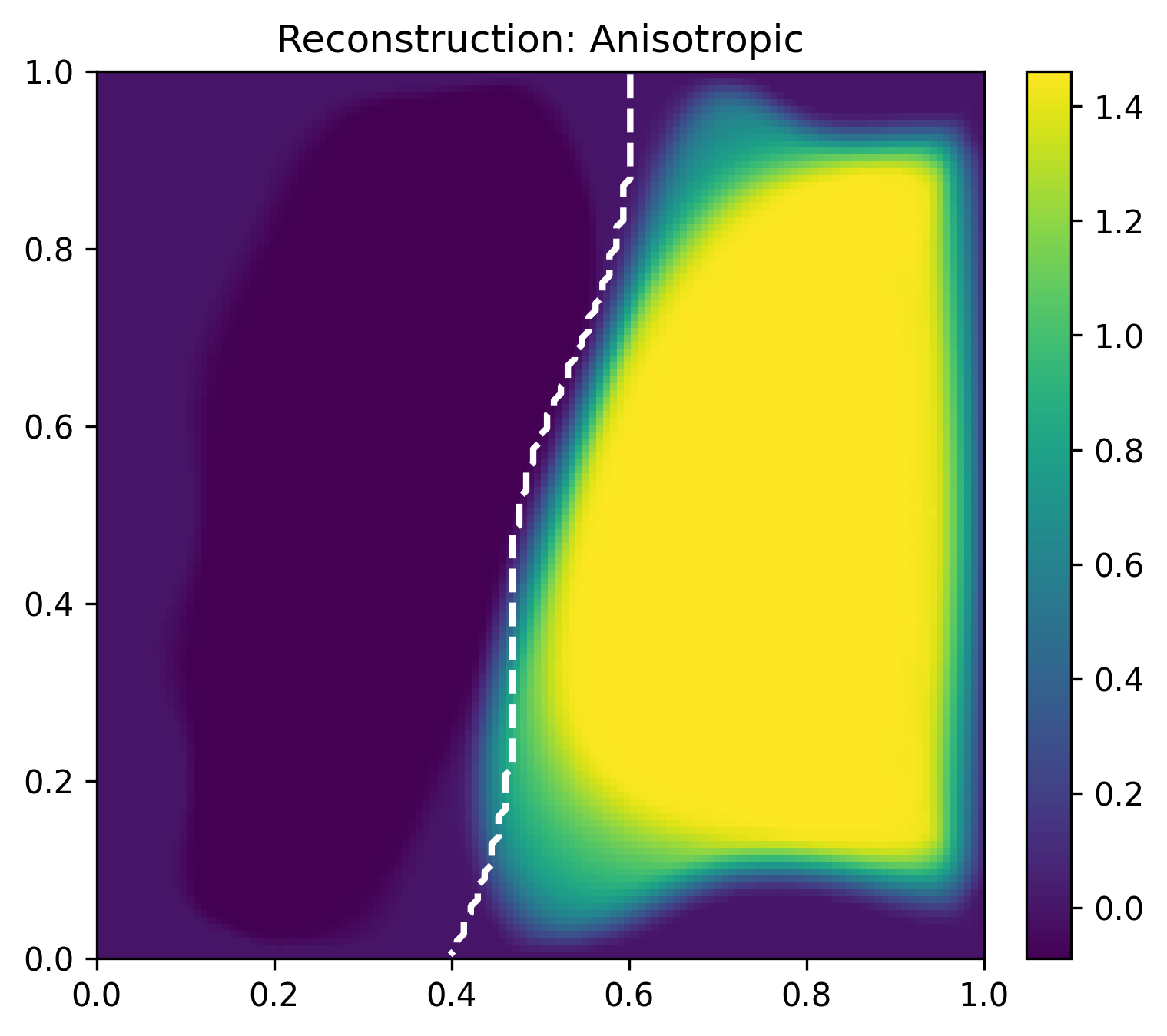}
        \caption{Dirichlet weight, \\ \centering $\beta>0$}
    \end{subfigure}
    \begin{subfigure}[t]{0.3\linewidth}
        \centering
        \includegraphics[trim=0 0 0 19, clip, width=\linewidth]{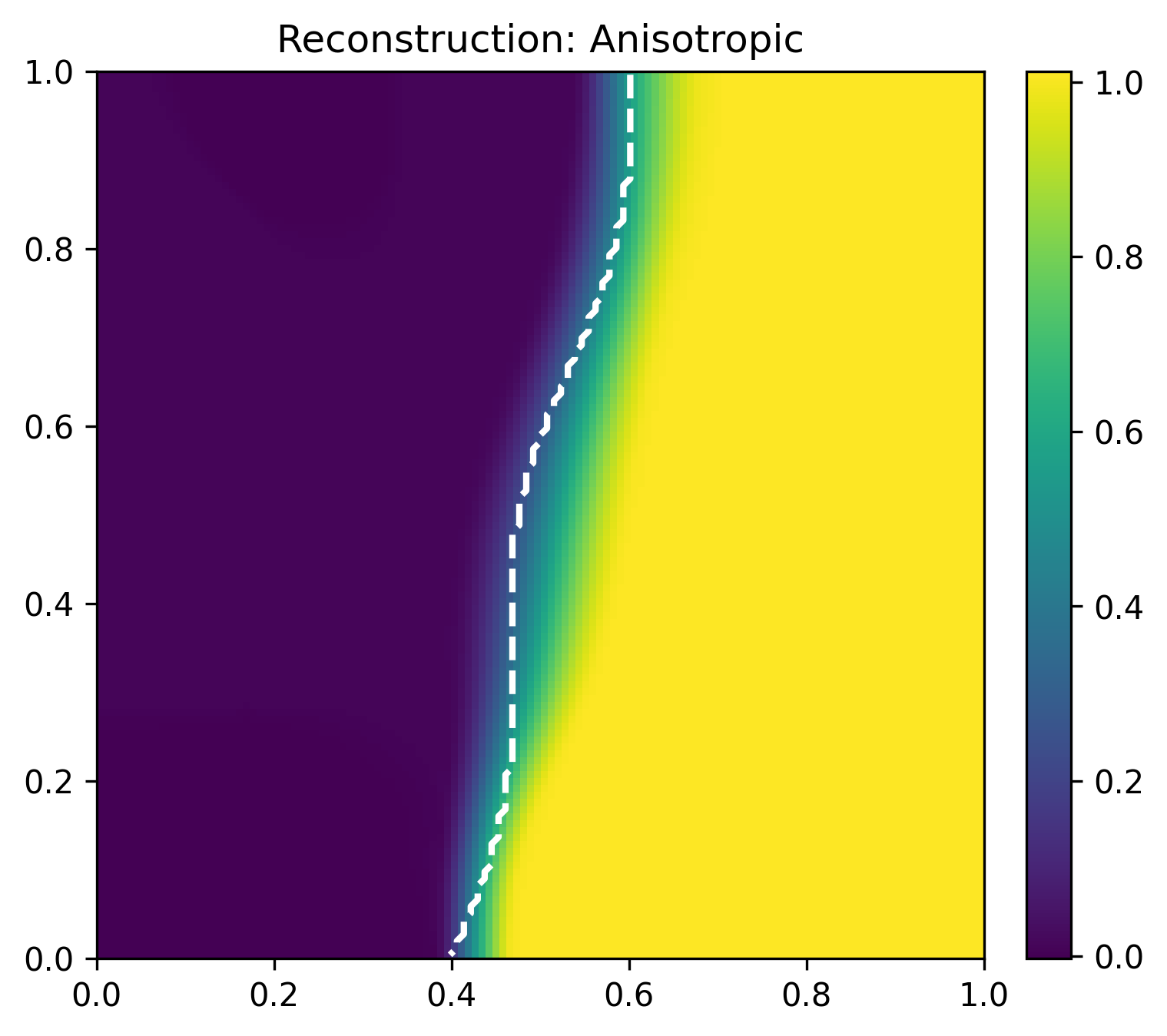}
        \caption{Dirichlet weight,\\ \centering $\beta=0$}
    \end{subfigure}
    \begin{subfigure}[t]{0.3\linewidth}
        \centering
        \includegraphics[trim=0 0 0 19, clip, width=\linewidth]{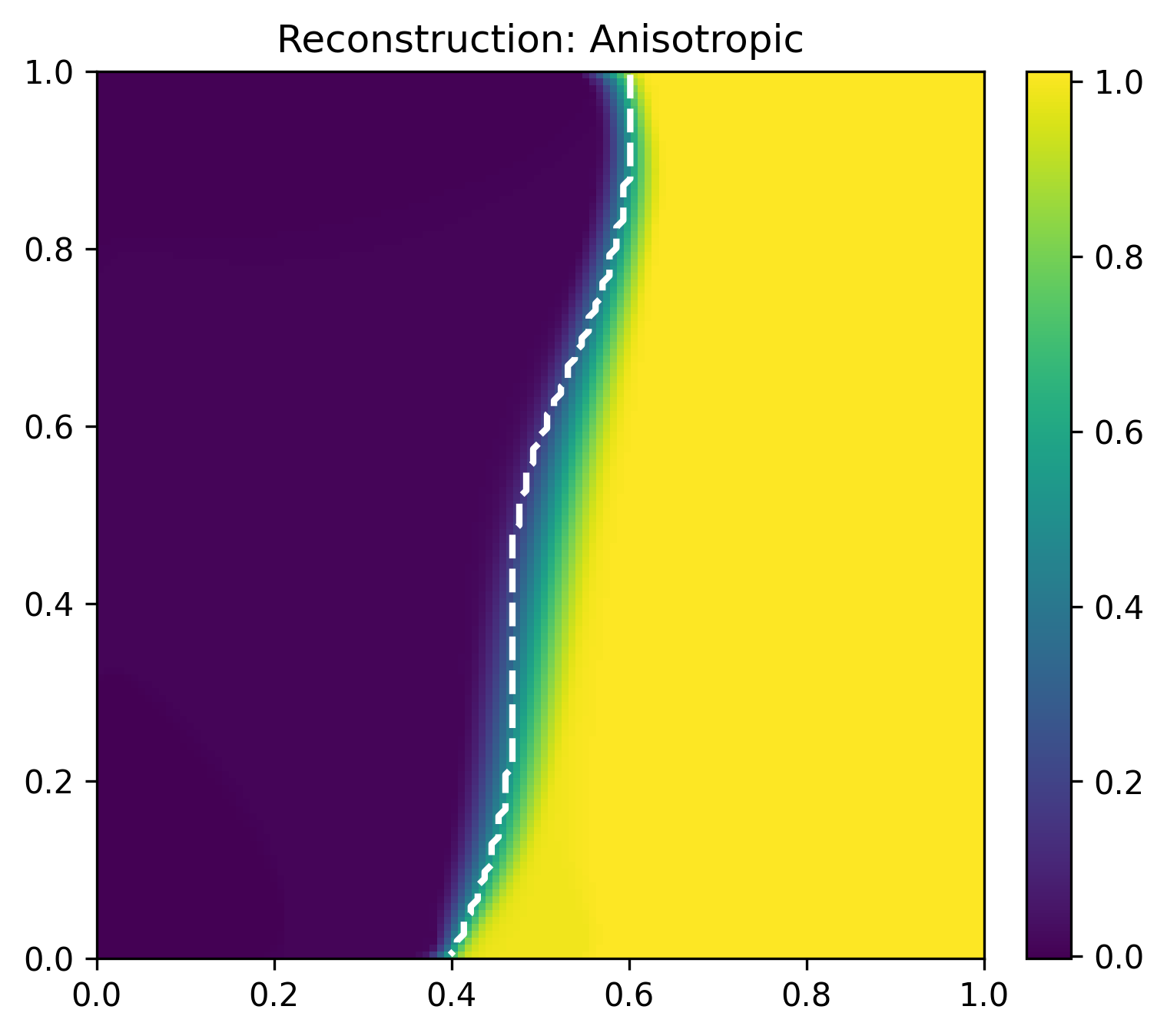}
        \caption{Neumann weight,\\ \centering $\beta=0$}
    \end{subfigure}
    \caption{Reconstruction of the material interface, comparing standard TV with directionally weighted TV, and the presence ($\beta>0$) or absence ($\beta=0$) of the boundary penalty. For the weighted reconstructions, the weight $\tw$ is generated from a Green's function with either Dirichlet or Neumann boundary conditions; the Neumann weight admits no natural boundary term, so its $\beta>0$ case is omitted.}
    \label{fig:cracks}
\end{figure}

\begin{figure}[H]
    \centering
    \begin{subfigure}[t]{0.3\linewidth}        
        \centering
        \includegraphics[trim=0 0 0 19, clip, width=\linewidth]{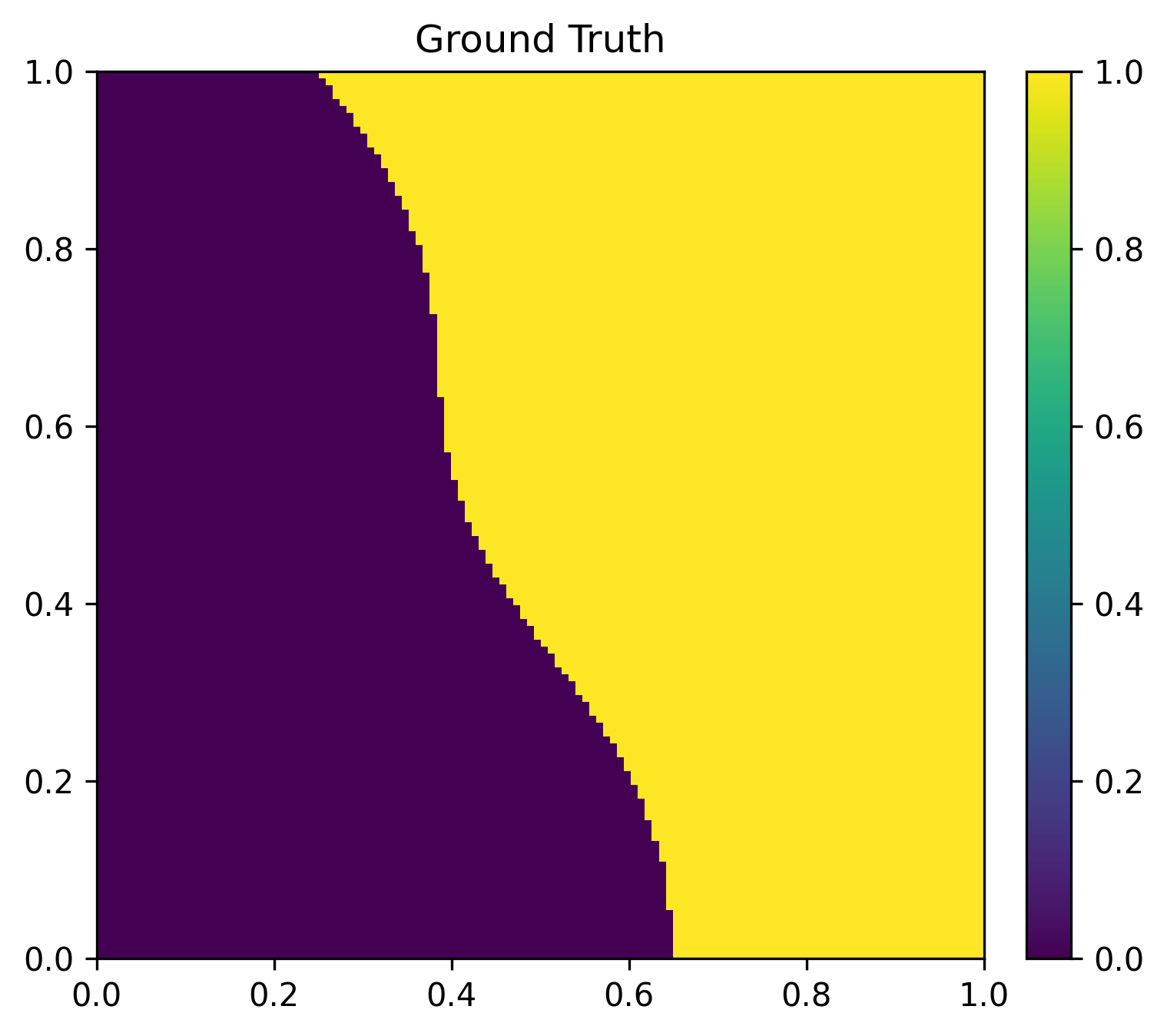}
        \caption{True interface}
    \end{subfigure}
    \begin{subfigure}[t]{0.3\linewidth}        
        \centering
        \includegraphics[trim=0 0 0 19, clip, width=\linewidth]{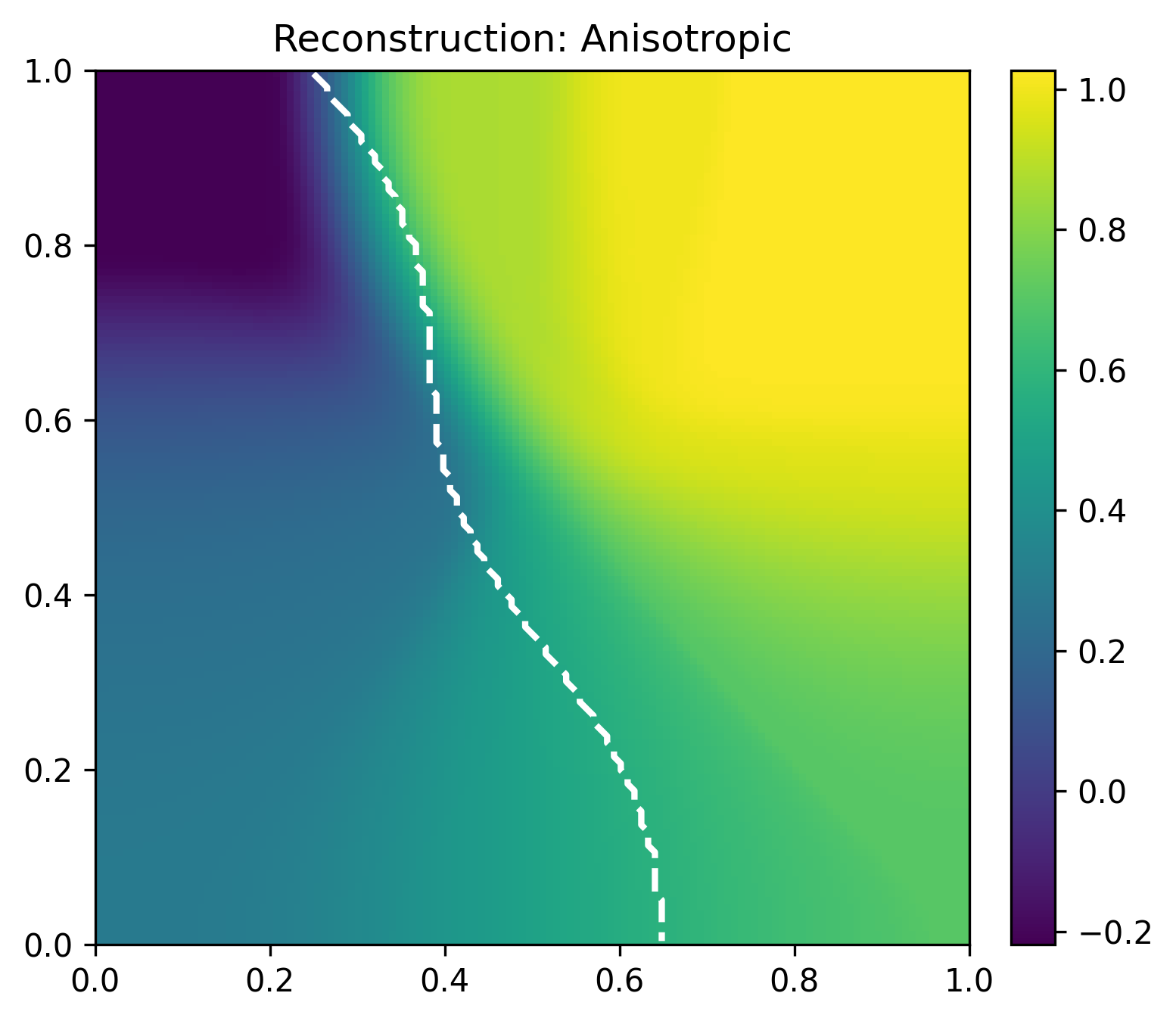}
        \caption{Dirichlet weight;\\ \centering $\beta = 0$}
    \end{subfigure}
    \begin{subfigure}[t]{0.3\linewidth}        
        \centering
        \includegraphics[trim=0 0 0 19, clip, width=\linewidth]{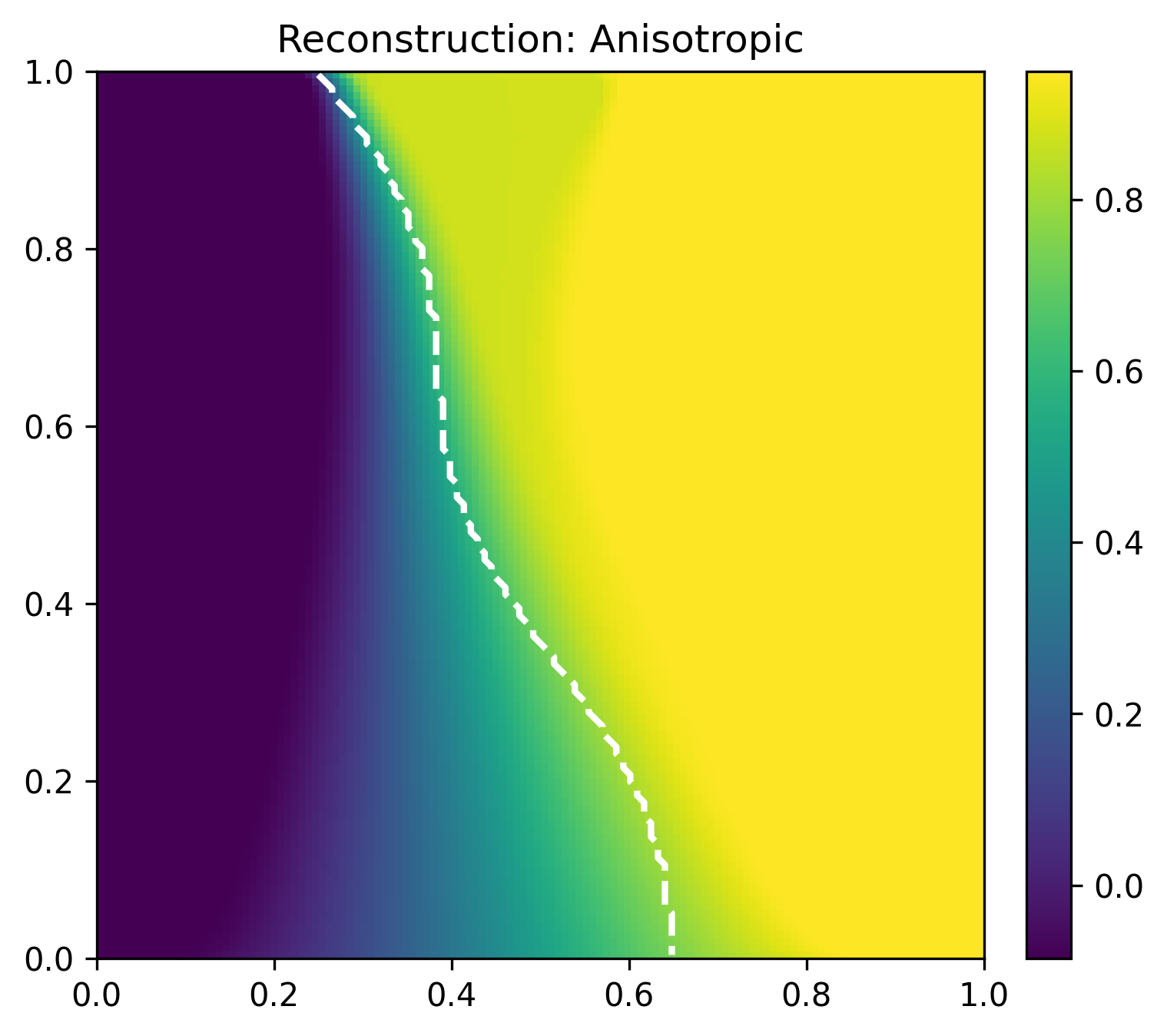}
        \caption{Neumann weight;\\ \centering $\beta = 0$}
    \end{subfigure}
    \begin{subfigure}[t]{0.3\linewidth}        
        \centering
        \includegraphics[trim=0 0 0 19, clip, width=\linewidth]{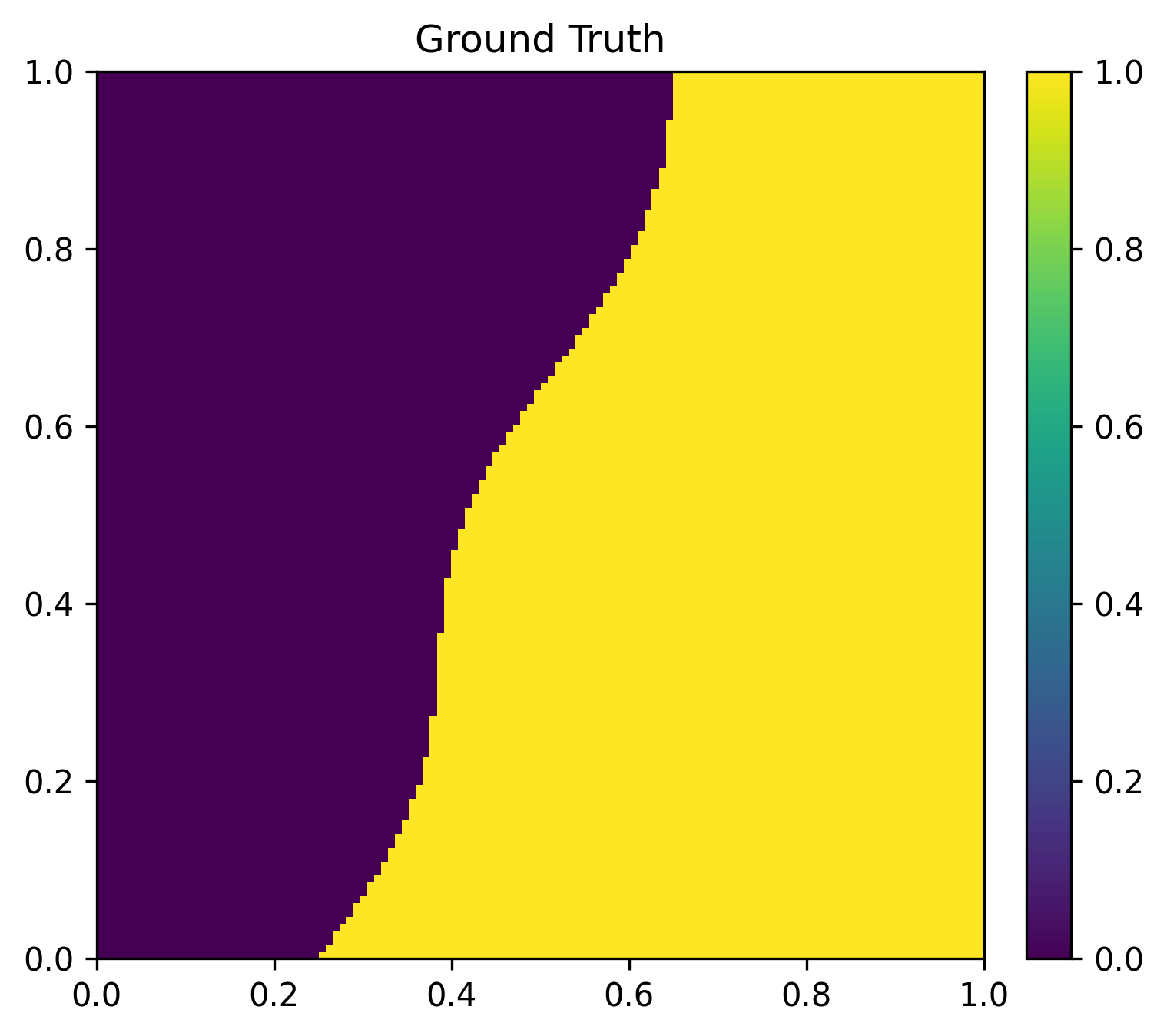}
        \caption{True interface}
    \end{subfigure}
        \begin{subfigure}[t]{0.3\linewidth}        
        \centering
        \includegraphics[trim=0 0 0 19, clip, width=\linewidth]{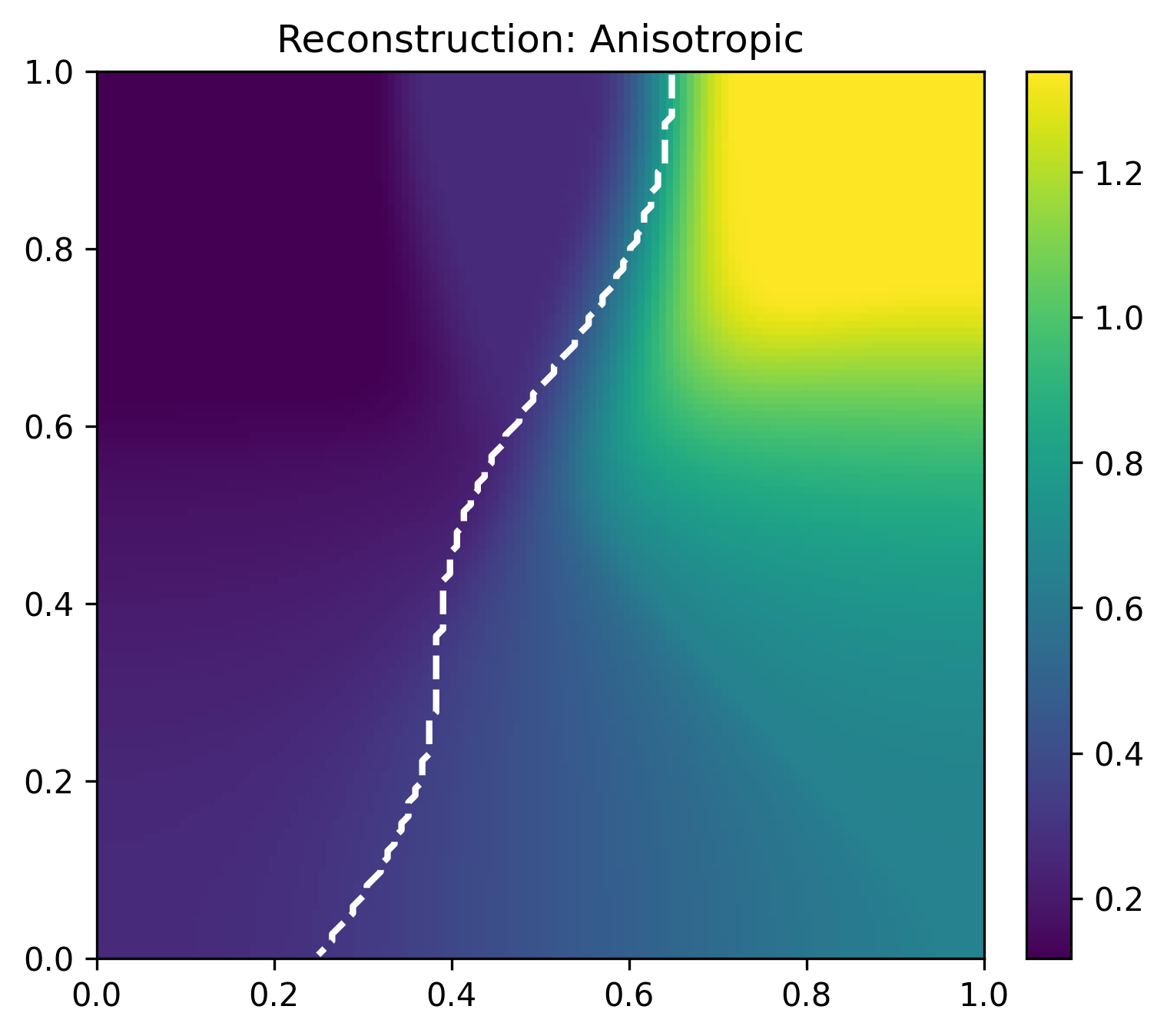}
        \caption{Dirichlet weight;\\ \centering $\beta = 0$}
    \end{subfigure}
    \begin{subfigure}[t]{0.3\linewidth}        
        \centering
        \includegraphics[trim=0 0 0 19, clip, width=\linewidth]{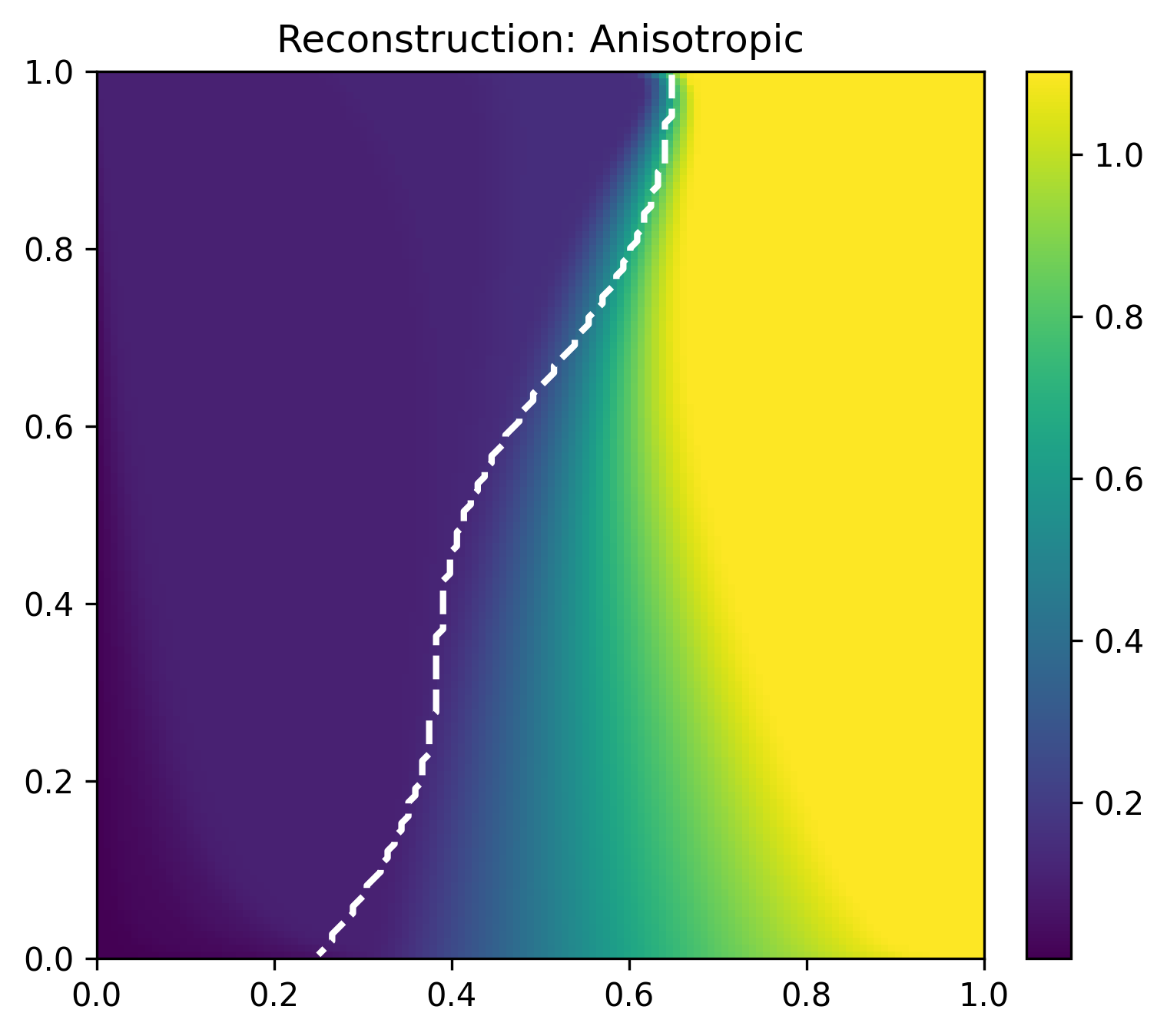}
        \caption{Neumann weight;\\ \centering $\beta = 0$}
    \end{subfigure}
    \caption{Reconstruction of the material interface, without boundary penalty, i.e., $\beta = 0$ in \eqref{def:variational_form_pre}, and with weights $\tw$ generated by Green's functions with either Dirichlet or Neumann boundary conditions. Only observations at the top of the domain, i.e., at $\Gamma = \{y \in \partial\Omega: y = 1\}$.}
    \label{fig:cracks_partial}
\end{figure}

\section{Conclusion}\label{sec:conclusion}
In this paper we have studied weighted total variation regularization for linear inverse problems whose forward operator $K \colon L^2(\Omega) \to L^2(\partial\Omega)$ has a significant null space. We have proposed and analyzed the directionally weighted variant~\eqref{def:variational_form_pre}-\eqref{def:tw}, in which the worst-case supremum over directions defining $w$, see \eqref{def:variational_form}-\eqref{def:w}, is replaced by the actual jump direction of the candidate solution, yielding a sharper but still convex regularizer $\TVtw$. 

In comparison to standard unweighted TV, we observed large improvements in recoverability for both of the weighted regularization techniques - in particular with respect to the center of mass and spatial extension. However, the directional weighting method did seem to produce better results regarding reconstruction of (some of) the shapes, compared with the use of $w$.

Both weights are constructed directly from the operator $K$ together with the Green's function for the Laplace operator on $\Omega$: depending on whether this Green's function is taken to satisfy homogeneous Neumann or homogeneous Dirichlet boundary conditions, the source representation produces either a purely interior term, corresponding to the choice $\beta = 0$ in the variational problem, or a representation with an additional boundary term coming equipped with a natural boundary weight $w_\partial$, which corresponds to the choice $\beta > 0$. Choosing the Green's function according to whether one wants to penalize boundary jumps therefore aligns the regularizer with the (expected) geometry of the unknown source. 

When the support of the true source extends to the boundary, we observed in numerical experiments that using Neumann boundary conditions outperformed the use of Dirichlet boundary conditions. Moreover, for all cases where the true support was strictly interior, we observed that the size and position of all kinds of sources were well recovered, but the new methods admittedly recovered circular/oval shapes much better than more involved shapes. 

Although it appears that both the total mass and the center of mass of the true sources are well recovered in our numerical simulations, it is currently an open problem whether we can prove that these quantities can be recovered for some typical shapes -- or alternatively, that some error estimates can be given.

Also, if more prior information concerning the shape(s) of the potential sources is known, it is currently unclear how this can be incorporated into the regularizer -- potentially through a more sophisticated choice of a PDE for generating the involved  Green's functions. 

\subsection*{AI Declaration}
The software used in this project was developed with the aid of AI.

\printbibliography

\end{document}